\pgfplotsset{compat=1.18}
\numberwithin{equation}{section}
\newtheorem{theorem}{Theorem}[section]
\newtheorem{lemma}[theorem]{Lemma}
\theoremstyle{definition}
\newtheorem{definition}[theorem]{Definition}
\newtheorem{example}[theorem]{Example}
\newtheorem{proposition}[theorem]{Proposition}
\theoremstyle{remark}
\numberwithin{equation}{section}
\definecolor{trp}{rgb}{1,1,1}
\definecolor{red}{rgb}{1,0,.2}
\definecolor{blue}{rgb}{0,0,1}
\definecolor{rgrey}{rgb}{.8,0.4,.4}  
\definecolor{grey}{rgb}{.13,.13,.13}  
\definecolor{green}{rgb}{0.0,0.4,0.2}
\numberwithin{equation}{section}
\newcommand{\R}{\mathbb{R}}
\newcommand{\N}{\mathbb{N}}
\begin{document}
	
	\title[]{Hausdorff dimension of self-similar measures and sets with common fixed point structure}
	
	
~~~~~~~
    
	\author{Bal\'azs B\'ar\'any}
	\author{Manuj Verma}
        \thanks{BB and MV acknowledge support from the grants NKFI~K142169 and NKFI KKP144059 "Fractal geometry and applications" Research Group.}
	\address{Department of Stochastics \\ Institute of Mathematics \\  Budapest University of Technology and Economics \\ M\H{u}egyetem rkp. 3., H-1111 Budapest, Hungary }
	\address[Bal\'azs B\'ar\'any]{barany.balazs@ttk.bme.hu}
	\address[Manuj Verma]{mathmanuj@gmail.com }

	
	%

	
	
	\subjclass{Primary 28A80 Secondary 28A78}
	
	\keywords{Hausdorff dimension, Self-similar measure, self-similar set, Iterated function system, Exponential separation}

 \date{\today}
	
	\begin{abstract}
	In this paper, we study the Hausdorff dimension of self-similar measures and sets on the real line, where the generating iterated function system consists of maps that share a common fixed point. In particular, we will show that out of a Hausdorff co-dimension one exceptional set of natural parameters, such systems satisfy a weak exponential separation. This significantly strengthens the previous result of B\'ar\'any and Szv\'ak \cite{BS2021}. As an application, we give the Hausdorff dimension of self-affine measures supported on the generalised 4-corner set.
	\end{abstract}
	
	   \maketitle
	
	
	\section{Introduction and Statements}\label{sec:intro}

For $N\geq 2$, the system $\mathcal{I}=\big\{ f_1,f_2,\ldots,f_N \big\}$ on $\mathbb{R}^d$ is called an \texttt{iterated function system (IFS)}, if each $f_i$ is a contraction map on $\mathbb{R}^d$ for $i\in \{1,2,\ldots, N\}$. Hutchinson \cite{Hutchinson} proved that there exists a unique non-empty compact set $A\subset \mathbb{R}^d$ such that 
$$A=\bigcup_{i=1}^{N}f_i(A).$$ 
Let $\mathbf{p}=(p_1,p_2,\dots,p_N)$ be a probability vector. Hutchinson \cite{Hutchinson} also proved that there exists a unique Borel probability measure $\mu$ supported on $A$ such that 
$$\mu(B)=\sum_{i=1}^{N}p_{i}\mu(f_{i}^{-1}(B))$$
for all Borel sets $B\subset \mathbb{R}^d$. The set $A$ is called the \texttt{attractor} of IFS $\mathcal{I}$, and the measure $\mu$ is known as the \texttt{stationary measure} corresponding to IFS $\mathcal{I}$ with the probability vector $\mathbf{p}$. 

    This paper focuses on the dimension theory of self-similar IFSs on the line. The IFS $\mathcal{I}$ is called a \texttt{self-similar iterated function system} on $\mathbb{R}$ if each $f_{i}:\mathbb{R}\to \mathbb{R}$ is a similarity map for $i\in\{1,2,\dots,N\}$, i.e.
$f_i(x)=r_ix+c_i$ where $0<|r_{i}|<1$ and $c_{i}\in \mathbb{R}$. Then, the attractor $A$ is called a \texttt{self-similar set} corresponding to the self-similar IFS $\mathcal{I}$. The stationary measure $\mu$ is known as the \texttt{self-similar measure} corresponding to the self-similar IFS $\mathcal{I}$ with probability vector $\mathbf{p}=(p_1,p_2,\dots,p_N)$. For later purposes, let us recall the definition of the Hausdorff dimension here.

Let $F\subseteq \mathbb{R}^d$. We say that $\{U_{i}\}_{i=1}^\infty$ is a $\delta$ cover of $F$ if $F\subset \bigcup\limits_{i=1}^{\infty}U_{i}$ and $0 <|U_{i}|\leq \delta$ for each ${i}$, where $|U_i|$ denotes the diameter of the set $U_i$. For each $\delta>0$ and $s\geq 0$, we define
		$$\mathcal{H}_{\delta}^{s}(F):=\inf\Big\{\sum_{i=1}^{\infty}|U_{i}|^{s} : \{U_{i}\}_{i=1}^{\infty}\text{ is a }\delta\text{-cover of }F\Big\}\text{ and }\mathcal{H}^{s}(F):=\lim_{\delta\to 0+}\mathcal{H}_{\delta}^{s}(F).$$
		We call $\mathcal{H}^{s}(F)$ the \texttt{$s$-dimensional Hausdorff measure} of the set $F$. Using this, the \texttt{Hausdorff dimension} of the set $F$ is defined by 
		$$\dim_{H}(F)=\inf\{s\geq 0 : \mathcal{H}^{s}(F)=0\}.$$
For the basic properties, we refer the reader to Falconer's book \cite{Falconer1990}. Another widely used dimension concept is the box-counting (or Minkowski) dimension; however, Falconer \cite{Falconer1989} showed that these dimensions coincide for self-similar sets, so we omit them.

For a Borel probability measure $\mu$ in $\mathbb{R}^d$, the \texttt{lower and upper local dimension} of the measure $\mu$ at the point $x\in \mathbb{R}^d$ defined as $$\underline{d}_{\mu}(x)=\liminf_{r\to 0}\frac{\log \mu(B(x,r))}{\log r},\quad \overline{d}_{\mu}(x)=\limsup_{r\to 0}\frac{\log \mu(B(x,r))}{\log r},$$
  where $B(x,r)$ denotes the open ball of radius $r$ with center $x$. If the limit exists at the point $x$, then we call it the local dimension and denote it by $d_\mu(x)$.
We say that the measure $\mu$ is \texttt{exact dimensional} if there is a constant $c$ such that $d_{\mu}(x)=c$ for $\mu$-almost every $x\in \mathbb{R}^d$. We will denote this value by $\dim_{H}(\mu)$. Feng and Hu \cite{FengandHu} proved that every self-similar measure $\mu$ is exact dimensional. For further properties of the dimension of measures, see Przytycki and Urba\'nski \cite[Section~7.6]{PU10}.

Let us define the \texttt{similarity dimension} for the self-similar IFS $\mathcal{I}$ to be the unique number $s_{0}\in \mathbb{R}$ such that $$\sum_{i=1}^{N}|r_{i}|^{s_0}=1.$$ Furthermore, the quantities 
$$h_{\mathbf{p}}=-\sum_{i=1}^{N}p_{i}\log p_{i}\quad \text{and}\quad \chi(\mathbf{p})=-\sum_{i=1}^{N}p_i\log |r_{i}|$$
are known respectively as the \texttt{entropy} and the \texttt{Lyapunov exponent} of the self-similar measure $\mu$ corresponding to the self-similar IFS $\mathcal{I}$ with probability vector $\mathbf{p}$. It is well known that 
$$\dim_{H}(A)\leq \min\{1,s_0\} \quad \text{and}\quad \dim_{H}(\mu)\leq \min\bigg\{1, \frac{h_{\mathbf{p}}}{\chi(\mathbf{p})}\bigg\}$$ 
regardless of any further assumptions. However, equalities appear in the above expressions under some suitable separation conditions. We say that the IFS $\mathcal{I}$ satisfies the \texttt{open set condition (OSC)}, if there is a nonempty and bounded open set $O\subset \mathbb{R}^d$ with $f_i(O) \subset O$ and $ f_i(O)\cap f_j(O)= \emptyset $ for every $i\ne j \in \{1,2,\dots, N\}$.

\begin{theorem}[Hutchinson \cite{Hutchinson}, Cawley and Mauldin \cite{CaMa}]
    Let the self-similar IFS $\mathcal{I}$ on $\mathbb{R}$. If $\mathcal{I}$ satisfies the open set condition (OSC), then
   \begin{equation}\label{eq:nodrop}\dim_{H}(A)= s_0 \quad \text{and}\quad \dim_{H}(\mu)= \frac{h_{\mathbf{p}}}{\chi(\mathbf{p})},\end{equation}
   where $A$ is the self-similar attractor of the self-similar IFS $\mathcal{I}$ and $\mu$ is the self-similar measure corresponding to the self-similar IFS $\mathcal{I}$ with probability vector $\mathbf{p}$.
\end{theorem}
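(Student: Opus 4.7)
The plan is to prove the two equalities by establishing matching upper and lower bounds; the upper bounds require no separation condition, and the OSC enters only in the lower bounds via a geometric ``bounded overlap'' argument.

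For the upper bounds I would cover the attractor by the natural $n$-th level cylinders $f_{\mathbf{i}}(A)=f_{i_1}\circ\cdots\circ f_{i_n}(A)$, each of diameter $|r_{i_1}\cdots r_{i_n}|\cdot|A|$. Summing gives $\sum_{|\mathbf{i}|=n}(|r_{\mathbf{i}}|\cdot|A|)^{s_0}=|A|^{s_0}$ for every $n$, so for any $s>s_0$ the $s$-Hausdorff pre-measure vanishes as $n\to\infty$, giving $\dim_{H}(A)\leq s_0$. For the measure, I would work symbolically: let $\pi\colon\{1,\ldots,N\}^{\mathbb{N}}\to A$ denote the canonical coding map, push the Bernoulli measure $\mathbf{p}^{\mathbb{N}}$ through $\pi$ to recover $\mu$, and apply the strong law of large numbers to obtain, for $\mathbf{p}^{\mathbb{N}}$-a.e.\ $\mathbf{i}$,
$$-\frac{1}{n}\log(p_{i_1}\cdots p_{i_n})\to h_{\mathbf{p}},\qquad -\frac{1}{n}\log|r_{i_1}\cdots r_{i_n}|\to\chi(\mathbf{p}).$$
Combined with $\mu(f_{i_1\cdots i_n}(A))\geq p_{i_1}\cdots p_{i_n}$ and the fact that $f_{i_1\cdots i_n}(A)\subset B(\pi(\mathbf{i}),|r_{i_1}\cdots r_{i_n}|\cdot|A|)$, this yields $\overline{d}_{\mu}(\pi(\mathbf{i}))\leq h_{\mathbf{p}}/\chi(\mathbf{p})$ almost surely, giving the matching upper bound via the Feng--Hu exact-dimensionality cited in the excerpt.

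For the lower bounds the key step is a Moran-type covering lemma. Fix $z\in O$ and $\rho,R>0$ with $B(z,\rho)\subset O\subset B(z,R)$, and for each $r>0$ consider the stopping set
$$\Lambda_r=\bigl\{\mathbf{i}=i_1\cdots i_n:\ |r_{i_1}\cdots r_{i_{n-1}}|\geq r>|r_{i_1}\cdots r_{i_n}|\bigr\}.$$
Iterating the OSC shows that $\{f_{\mathbf{i}}(O)\}_{\mathbf{i}\in\Lambda_r}$ is a pairwise disjoint family, and each $f_{\mathbf{i}}(O)$ contains a ball of radius comparable to $r$. A volume comparison then bounds the number of $\mathbf{i}\in\Lambda_r$ for which $f_{\mathbf{i}}(O)$ meets any given ball of radius $r$ by a constant $M=M(\rho,R,\min_i|r_i|)$, independent of $r$ and of the ball's centre. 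Applying the mass distribution principle to the Bernoulli-type measure on $A$ with weights $|r_i|^{s_0}$ delivers $\dim_{H}(A)\geq s_0$, while applying the same counting at the stopping time along a typical $\mathbf{i}$ gives $\mu(B(\pi(\mathbf{i}),r))\leq M\cdot\max_{\mathbf{j}\in\Lambda_r}p_{j_1}\cdots p_{j_n}$, which together with the SLLN estimates above yields $\underline{d}_{\mu}(\pi(\mathbf{i}))\geq h_{\mathbf{p}}/\chi(\mathbf{p})$ for $\mu$-a.e.\ point.

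The main obstacle is the Moran covering lemma itself: one must invoke OSC disjointness at the \emph{stopped} level $\Lambda_r$ rather than at a uniform level $n$, and carefully exploit that each $f_{\mathbf{i}}(O)$ is a similar copy of $O$ of diameter $\asymp r$ in order to pack them against a single ball of radius $r$. The rest is a standard assembly of symbolic coding, the strong law of large numbers, and the mass distribution principle.
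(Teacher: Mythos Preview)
The paper does not prove this theorem; it is stated as a classical result attributed to Hutchinson and Cawley--Mauldin and is quoted without proof as background for the paper's own results. Your proposal is a correct outline of the standard argument (natural covers for the upper bounds, Moran stopping-time covers with the mass distribution principle and the strong law of large numbers for the lower bounds), so there is nothing to compare against in the paper itself.
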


Hochman \cite{Hochman2014} introduced the Exponential Separation Condition for the self-similar IFSs, which is a much weaker separation condition than the OSC. We say that the self-similar IFS $\mathcal{I}$ on $\mathbb{R}$ satisfies the \texttt{Exponential Separation Condition (ESC)} if there exists a constant $\eta>0$ such that for infinitely many values of $n\in \mathbb{N}$ for all $i_1,\ldots,i_n,j_1,\ldots,j_n\in \{1,\ldots, N\}$ with $(i_1,\ldots,i_n)\ne(j_1,\ldots,j_n)$ and $r_{i_1}\cdots r_{i_n}=r_{j_1}\cdots r_{j_n}$, we have 
   $$|f_{i_1}\circ\cdots\circ f_{i_n}(0)-f_{j_1}\circ \cdots\circ f_{j_n}(0)|\geq {\eta}^{n}.$$ 
In a proper sense, the exponential separation is a typical property for self-similar IFSs. For example, suppose the contraction ratios and translations are analytic maps of some one-dimensional parameter, and the system is not degenerate. In that case, ESC holds out of a zero-dimensional set of exceptional parameters. In particular, for fixed contraction ratios with $\max_{i\neq j}\{|r_i|+|r_j|\}<1$, the dimension of translation vectors $(c_i)_{i=1}^N\in\mathbb{R}^N$, for which $\mathcal{I}$ does not satisfy ESC, is at most $N-1$, see \cite[Corollary~6.8.12]{BSS23}\footnote{We note that the assumption $\max_{i\neq j}\{|r_i|+|r_j|\}<1$ is missing incorrectly in the statement of \cite[Corollary 6.8.12]{BSS23}}. Hochman \cite{Hochman2014} proved the following groundbreaking result for the dimension theory of self-similar IFSs.

\begin{theorem}\label{MainHochmanresult}[Hochman~\cite{Hochman2014}]
    If the self-similar IFS $\mathcal{I}$ on $\mathbb{R}$ satisfies the ESC, then 
    \begin{equation}\label{eq:nodrop2}\dim_{H}(A)=\min\{1,s_0\} \quad \text{and}\quad \dim_{H}(\mu)=\min\left\{1,\frac{h_{\mathbf{p}}}{\chi(\mathbf{p})}\right\}.\end{equation}
\end{theorem}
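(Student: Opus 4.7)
The plan is to prove the measure statement first; the set statement then follows by choosing the probability vector $p_i = |r_i|^{t}/\sum_j |r_j|^t$ with $t = \min\{1, s_0\}$, for which $h_{\mathbf{p}}/\chi(\mathbf{p}) = \min\{1, s_0\}$, so that $\dim_H(A) \geq \dim_H(\mu) = \min\{1,s_0\}$. The upper bounds in \eqref{eq:nodrop2} require no separation hypothesis: $\dim_H(\mu)\le 1$ is trivial, while $\dim_H(\mu)\le h_{\mathbf{p}}/\chi(\mathbf{p})$ follows from Feng--Hu exact dimensionality together with subadditivity of Shannon entropy on the symbolic tree. So the real work is the lower bound for $\dim_H(\mu)$ under ESC.

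I would work with Shannon entropy at dyadic scales. Let $\mathcal{D}_n$ be the partition of $\mathbb{R}$ into intervals of length $2^{-n}$ and write $H(\nu, \mathcal{D}_n)$ for the corresponding entropy; Feng--Hu exact dimensionality gives $\dim_H(\mu) = \lim_{n\to\infty} H(\mu, \mathcal{D}_n)/(n\log 2)$. Iterating the self-similarity relation yields the decomposition $\mu = \sum_{\mathbf{i}\in\{1,\ldots,N\}^n} p_{\mathbf{i}}\, (f_{\mathbf{i}})_*\mu$, whose natural atomic proxy is $\mu^{(n)} := \sum_{\mathbf{i}} p_{\mathbf{i}}\, \delta_{f_{\mathbf{i}}(0)}$. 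At scales coarser than $\max_{\mathbf{i}} |r_{\mathbf{i}}|$ the measures $\mu$ and $\mu^{(n)}$ differ only by convolution with a bounded piece, so one can compare $H(\mu, \mathcal{D}_m)$ with $H(\mu^{(n)}, \mathcal{D}_m)$ for $m \approx n\chi(\mathbf{p})/\log 2$. Suppose now, for contradiction, that $\dim_H(\mu) < \min\{1, h_{\mathbf{p}}/\chi(\mathbf{p})\}$. Then for some $\varepsilon>0$ and infinitely many $n$, the entropy $H\bigl(\mu^{(n)}, \mathcal{D}_{\lfloor n\chi(\mathbf{p})/\log 2\rfloor}\bigr)$ falls short of $n h_{\mathbf{p}}/\log 2$ by at least $\varepsilon n$.

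At this point I would invoke Hochman's inverse theorem for convolution entropies on $\mathbb{R}$: a persistent entropy deficit in an $n$-fold convolution forces a positive proportion of the factors to be near-atomic at the relevant scale. Applied to the symbolic-tree representation of $\mu^{(n)}$ (with appropriate rescalings to absorb the unequal contractions), this yields for infinitely many $n$ two distinct words $\mathbf{i}, \mathbf{j} \in \{1,\ldots,N\}^n$ with $r_{\mathbf{i}} = r_{\mathbf{j}}$ and $|f_{\mathbf{i}}(0) - f_{\mathbf{j}}(0)| \leq \exp(-C n/\varepsilon)$, for a constant $C$ depending only on the system. Choosing $n$ large enough so that $\exp(-Cn/\varepsilon) < \eta^n$ then contradicts ESC, completing the proof.

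The decisive obstacle is unquestionably the inverse theorem itself, which is the technical heart of Hochman's paper and occupies most of its length; by comparison the reductions sketched above are a standard package (conditional entropy identities, concavity, the Feng--Hu formula, and careful scale matching between symbolic length $n$ and dyadic resolution $2^{-n\chi(\mathbf{p})/\log 2}$) once the framework is in place. A secondary, more combinatorial, difficulty is the passage from "a positive fraction of convolution factors are atomic" to the clean two-word collision statement that can be fed into the ESC hypothesis, especially in handling the non-uniform contractions $r_i$.
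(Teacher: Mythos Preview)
The paper does not prove this theorem; it is stated with a citation to Hochman~\cite{Hochman2014} and used as a black box. There is therefore no proof in the paper to compare your proposal against.

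That said, your sketch is a faithful outline of Hochman's actual argument: the reduction from sets to measures via the $s_0$-Gibbs weights, the entropy-at-dyadic-scales framework, the comparison of $\mu$ with the atomic measure $\mu^{(n)}$, and the appeal to the inverse theorem for convolution entropy are all the right moves, and you correctly identify the inverse theorem as the load-bearing step. One small imprecision: for $s_0>1$ the weights $p_i=|r_i|^t/\sum_j|r_j|^t$ with $t=\min\{1,s_0\}=1$ give $h_{\mathbf p}/\chi(\mathbf p)>1$, not $=1$; but this does not matter since $\min\{1,h_{\mathbf p}/\chi(\mathbf p)\}=1$ either way, and in fact taking $t=s_0$ works uniformly. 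The paper's own related argument (Theorem~\ref{MRandom}) bypasses the inverse theorem entirely by quoting its consequence, Theorem~\ref{Hochmanentropy}, directly from \cite{Hochman2014}, which is the more economical route if one is willing to cite rather than reprove.
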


The ESC implies that the self-similar IFS $\mathcal{I}$ has no exact overlap, i.e. there are no finite sequences of symbols $(i_1,\ldots,i_n)\ne(j_1,\ldots,j_m)$ such that $f_{i_1}\circ\cdots\circ f_{i_n}\equiv f_{j_1}\circ \cdots\circ f_{j_m}$. When exact overlaps occur, a better bound can be obtained by counting them. Lau and Ngai \cite{LauNgai} and Zerner \cite{Zerner} introduced the Weak Separation Condition (WSC), which allows exact overlaps. Still, the not-exactly-overlapping maps must be relatively far apart, comparable to the contraction ratios of the maps. This allows us to calculate the dimension of the attractor as a limit of the similarity dimensions of higher iterates. In some special cases, when the contraction ratios are reciprocals of integers, it is possible to calculate the dimension of the corresponding self-similar measures, see Ruiz \cite{Ruiz}.

The situation becomes significantly more difficult if the overlapping structure of the self-similar IFS $\mathcal{I}$ is complicated, for instance, when some of the maps of the IFS share the same fixed point. Let $(t_i)_{i=1}^N\in\mathbb{R}^N$ be a vector with pairwise different entries. For every $i\in\{1,\ldots,N\}$, let $n_i\geq1$ be an integer. For every $j\in\{1,\ldots,n_i\}$, let $\lambda_{i,j}\in(0,1)$. Consider the following self-similar IFS on the real line:
	\begin{equation}\label{eq:mainIFS}\mathcal{S}=\{f_{i,j}(x)=\lambda_{i,j}x+t_{i}(1-\lambda_{i,j})\}_{1\leq i\leq N, 1\leq j\leq n_i}.\end{equation}
If $n_i\geq2$ for some $i$, then the maps $f_{i,j}$ and $f_{i,j'}$ with $j\neq j'$ share the same fixed point, which causes many exact overlaps, for example $f_{i,j}\circ f_{i,j'}\equiv f_{i,j'}\circ f_{i,j}$. Furthermore, if $\frac{\log\lambda_{i,j}}{\log\lambda_{i,j'}}\notin\mathbb{Q}$ then the IFS $\mathcal{S}$ does satisfy the WSC, see Fraser \cite{FrasernoWSC}, and if $\frac{\log\lambda_{i,j}}{\log\lambda_{i,j'}}=\frac{p}{q}\in\mathbb{Q}$ then it causes further exact overlaps, since composing $f_{i,j'}$ with itself $p$-times and composing $f_{i,j}$ with itself $q$-times result the same map.

 The dimension theory of such systems has been studied previously. B\'ar\'any \cite{Barany1} and with Szv\'ak in \cite{BS2021} studied the Hausdorff dimension of the attractor of the special case $\{\lambda_{1,1}x,\lambda_{1,2}x,\lambda_{2,1}x+1\}$. They showed that if $\lambda_{1,1}+\lambda_{2,1}<1$ then for Lebesgue almost every $\lambda_{1,2}\in(0,\lambda_{1,1})$ the dimension of the attractor is $\min\{1,s\}$, where $s$ is the unique root of the equation
$$\lambda_{1,1}^{s}+\lambda_{1,2}^s-\lambda_{1,1}^s\lambda_{1,2}^s+\lambda_{2,1}^s=1.$$
Moreover, in  \cite{BS2021}, B\'ar\'any and Szv\'ak gave a formula for the dimension of self-similar measures corresponding to that IFS for Lebesgue almost every parameter under the technical assumption that $0<\lambda_{1,1},\lambda_{1,2},\lambda_{2,1}<1/9$. Further generalisations can be found in \cite{Barany2, Barany4corner}.

In this paper, we extend significantly the dimension result for such systems. For simplicity, let us introduce the following notations: let $L=\sum_{i=1}^Nn_i$ be the number of maps in $\mathcal{S}$, let $\underline{\lambda}=(\lambda_{i,j})_{1\leq i\leq N,1\leq j\leq n_i}$ be the $L$-dimensional vector formed by the contraction ratios, and let $\underline{t}=(t_i)_{1\leq i\leq N}$ be the vector formed by the fixed points. To emphasize the dependence on the the parameters, we may write $\mathcal{S}_{\underline{\lambda},\underline{t}}$ for the system defined in \eqref{eq:mainIFS}, $A_{\underline{\lambda},\underline{t}}$ for the attractor of $\mathcal{S}_{\underline{\lambda},\underline{t}}$ and $\mu_{\underline{\lambda},\underline{t}}^{\mathbf{p}}$ for the self-similar measure corresponding to the IFS $\mathcal{S}_{\underline{\lambda},\underline{t}}$ and probability vector $\mathbf{p}$. Our main result on the dimension of self-similar measures is the following:

\begin{theorem}\label{thm:main} Let $\mathcal{S}_{\underline{\lambda},\underline{t}}$ be a self-similar IFS as in \eqref{eq:mainIFS}. There exists a set $\mathbf{E}\subset(0,1)^{L}$ with $\dim_H(\mathbf{E})\leq L-1$ such that for every $\underline{\lambda}\in(0,1)^{L}\setminus\mathbf{E}$ there exists a set $\mathbf{F}\subset\mathbb{R}^N$ with $\dim_H(\mathbf{F})\leq N-1$ such that for every $\underline{t}\in\mathbb{R}^N\setminus\mathbf{F}$ the following holds: for every probability vector $\mathbf{p}=(p_{i,j})_{1\leq i\leq N, 1\leq j\leq n_i}$ 
	$$\dim_{H}(\mu_{\underline{\lambda},\underline{t}}^{\mathbf{p}})=\min\bigg\{1, \frac{-\sum_{i=1}^{N}\sum_{j=1}^{n_i}p_{i,j}\log(p_{i,j})+\Phi(\mathbf{p})}{-\sum_{i=1}^{N}\sum_{j=1}^{n_i}p_{i,j}\log(\lambda_{i,j})}\bigg\},$$
		where 
        \begin{equation}\label{eq:phip}
        \Phi(\mathbf{p})=\sum_{l=1}^{N}\sum_{m=1}^{n_{l}}\sum_{k=0}^{\infty}\sum_{q=0}^{k}\binom{k}{q}(p_{l,m})^{q+1}\left(\sum_{\substack{j=1\\j\neq m}}^{n_l}p_{l,j}\right)^{k-q}\bigg(1-\sum_{j=1}^{n_l}p_{l,j}\bigg)\log\left(\frac{q+1}{k+1}\right).
        \end{equation}
\end{theorem}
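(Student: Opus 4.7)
My approach is to exploit that all maps sharing a common fixed point commute, reducing the question to a random walk whose only exact overlaps are completely understood, and then to establish a suitable weak exponential separation generically in the parameters.

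For fixed $l\in\{1,\dots,N\}$, the maps $\{f_{l,j}\}_{j=1}^{n_l}$ commute pairwise because they share the fixed point $t_l$. Thus a composition $f_{(i_1,j_1)}\circ\cdots\circ f_{(i_n,j_n)}$ is determined by its coding word only up to permutations of consecutive letters with the same first coordinate. Decomposing a random word with i.i.d.\ letters of law $\mathbf{p}$ into maximal blocks of constant first coordinate yields a Markov-type structure in which block lengths starting in state $l$ are geometric with parameter $1-\sum_{j=1}^{n_l}p_{l,j}$ and, given the length, the intra-block composition is multinomial. A direct computation of the asymptotic random-walk entropy---equal to $h_{\mathbf{p}}$ minus the expected log-number of orderings within a block, normalized by the expected block length---identifies it with $h_{\mathbf{p}}+\Phi(\mathbf{p})$ for $\Phi(\mathbf{p})$ as in \eqref{eq:phip}; expanding the geometric block-length and multinomial intra-block weights reproduces the series in \eqref{eq:phip} line by line.

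The main step is the generic weak exponential separation: I would show that there is $\eta>0$ such that, outside an exceptional set of parameters of Hausdorff codimension at least one, for infinitely many $n$ and every pair of length-$n$ coding words $\underline{\alpha}\ne\underline{\beta}$ which are inequivalent under the commutations above and satisfy $\lambda_{\alpha_1}\cdots\lambda_{\alpha_n}=\lambda_{\beta_1}\cdots\lambda_{\beta_n}$, one has $|f_{\underline{\alpha}}(0)-f_{\underline{\beta}}(0)|\geq\eta^n$. Since $f_{\underline{\alpha}}(0)-f_{\underline{\beta}}(0)$ depends real-analytically on $\underline{\lambda}$ and is affine in $\underline{t}$, a Hochman-type transversality argument combined with a Borel--Cantelli covering estimate converts this into the claimed codimension-one dimension bound, provided one first shows that the difference is not identically zero in the parameters for any inequivalent pair. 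The argument is carried out in two stages matching the quantifiers of the statement: first fix $\underline{t}$ and vary $\underline{\lambda}$ to obtain an exceptional set $\mathbf{E}\subset(0,1)^L$ with $\dim_H(\mathbf{E})\leq L-1$; then for each $\underline{\lambda}\notin\mathbf{E}$, vary $\underline{t}$ to obtain the slice-wise exceptional set $\mathbf{F}\subset\mathbb{R}^N$ with $\dim_H(\mathbf{F})\leq N-1$.

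The principal obstacle will be the algebraic non-vanishing step: one must verify that the commutation relations really account for all exact overlaps on a generic part of parameter space, i.e.\ that further accidental coincidences---such as those caused by rational ratios $\log\lambda_{l,j}/\log\lambda_{l,j'}\in\mathbb{Q}$ or by coincidences between maps attached to different fixed points---live on a set of Hausdorff codimension at least one and, crucially, do not obstruct the non-vanishing needed for the transversality argument. Once the weak exponential separation is established, Hochman's dimension theorem (Theorem~\ref{MainHochmanresult}), in its random-walk-entropy formulation, combined with the entropy identity between $h_{RW}(\mathbf{p})$ and $h_{\mathbf{p}}+\Phi(\mathbf{p})$, yields the stated formula $\dim_{H}(\mu_{\underline{\lambda},\underline{t}}^{\mathbf{p}})=\min\{1,(h_{\mathbf{p}}+\Phi(\mathbf{p}))/\chi(\mathbf{p})\}$.
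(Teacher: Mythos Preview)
Your outline is correct and matches the paper's strategy: compute $h_{RW}=h_{\mathbf{p}}+\Phi(\mathbf{p})$ via the block decomposition (Theorem~\ref{thm:RWentropy}), establish a weak exponential separation generically (Proposition~\ref{ESC for gerenalsystem}), and conclude via the random-walk-entropy extension of Hochman's theorem (Theorem~\ref{MRandom} rather than Theorem~\ref{MainHochmanresult}, since ESC itself fails because of the commutations). One clarification on your two-stage plan: the paper obtains $\mathbf{E}$ \emph{independently} of $\underline{t}$ by first reducing to $N$ auxiliary two-fixed-point systems $\mathcal{S}_i$ with fixed points $0$ and $1$, and only then exploits the linearity $\Pi(\mathbf{i})=\langle\underline{t},\hat{\Pi}(\mathbf{i})\rangle$ to produce $\mathbf{F}$; your phrase ``first fix $\underline{t}$'' should be read in this sense, not as allowing $\mathbf{E}$ to depend on $\underline{t}$.
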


This result significantly strengthens the previous result of B\'ar\'any and Szv\'ak \cite[Theorem~1.2]{BS2021}. Although the expression $\Phi(\mathbf{p})$ is tedious, it has a simple heuristic. It measures the proportion of the symbol that appears first in the first block, which is formed by the maps sharing the same fixed point. Namely, suppose $X_{1}, X_{2},\dots$ are independent and identically distributed random variables such that $\mathbb{P}(X_r=(i,j))=p_{i,j}$ for all $i\in\{1,\dots, N\}$ and $j\in \{1,2,\dots,n_i\}$. Let $k_{i,j}:=\min\{r\geq 1: X_r\notin\{(i,1),\ldots,(i,n_i)\} \}$, and let $Y_{i,j}:=\#\{1\leq r\leq k_{i,j}-1: X_r=(i,j)\}$ for all $j\in \{1,2,\dots,n_i\}.$ Then, the quantity $\Phi(\mathbf{p})$ can be written in the form 
$$\Phi(\mathbf{p})= \mathbb{E}\bigg(\log\bigg(\frac{Y_{X_1}}{k_{X_1}-1}\bigg)\bigg).$$ 

Next, we state a similar result for the dimension of the attractor of $\mathcal{S}_{\underline{\lambda},\underline{t}}$.

\begin{theorem}\label{dimattractor}
  Let $\mathcal{S}_{\underline{\lambda},\underline{t}}$ be a self-similar IFS as in \eqref{eq:mainIFS}. There exists a set $\mathbf{E}\subset(0,1)^{L}$ with $\dim_H(\mathbf{E})\leq L-1$ such that for every $\underline{\lambda}\in(0,1)^{L}\setminus\mathbf{E}$ there exists a set $\mathbf{F}\subset\mathbb{R}^N$ with $\dim_H(\mathbf{F})\leq N-1$ such that for every $\underline{t}\in\mathbb{R}^N\setminus\mathbf{F}$,
  $$\dim_H(A_{\underline{\lambda},\underline{t}})=\min\{1,s_0\},$$ where $A_{\underline{\lambda},\underline{t}}$ is the attractor of $\mathcal{S}_{\underline{\lambda},\underline{t}}$ and $s_0$ is the unique solution of the equation
  \begin{equation}\label{eq:dimatt}
  \sum_{i=1}^N\prod_{j=1}^{n_i}(1-(\lambda_{i,j})^{s_0})=N-1.
  \end{equation}
\end{theorem}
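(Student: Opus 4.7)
The plan is to derive Theorem \ref{dimattractor} from Theorem \ref{thm:main} via a variational argument, paired with an unconditional upper bound coming from an infinite-alphabet reorganisation of $\mathcal{S}_{\underline{\lambda},\underline{t}}$. Since $f_{i,j}$ and $f_{i,j'}$ share the fixed point $t_i$ they commute, so every composition from $\mathcal{S}_{\underline{\lambda},\underline{t}}$ factorises uniquely into maximal blocks
$$g_{i,\vec{a}}(x) = \prod_{j=1}^{n_i}\lambda_{i,j}^{a_j}(x-t_i)+t_i,\qquad \vec{a}\in\mathbb{N}_{\ge 0}^{n_i}\setminus\{\vec{0}\},$$
in which consecutive blocks carry distinct first index. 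This realises $A_{\underline{\lambda},\underline{t}}$ as the attractor of a countable subshift-of-finite-type IFS $\tilde{\mathcal{S}}$ on $N$ states. Covering $A_{\underline{\lambda},\underline{t}}$ by the level-$n$ cylinders of $\tilde{\mathcal{S}}$ produces, up to boundary constants, an $s$-sum of diameters of the form $\vec{1}^{T}M(s)^{n}\vec{1}$ with
$$M(s)_{i,i'} = \mathbf{1}[i\ne i']\bigg(\prod_{j=1}^{n_{i'}}\frac{1}{1-\lambda_{i',j}^{s}}-1\bigg),$$
so $\dim_{H}(A_{\underline{\lambda},\underline{t}})\le s$ whenever $\rho(M(s))\le 1$. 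Solving the left-eigenvector equation coordinate by coordinate one checks that $\rho(M(s))=1$ precisely at the unique $s$ satisfying $\sum_i\prod_j(1-\lambda_{i,j}^{s})=N-1$, i.e.\ at $s=s_0$. Combined with the trivial bound $\dim_{H}(A)\le 1$ this gives $\dim_{H}(A_{\underline{\lambda},\underline{t}})\le \min\{1,s_0\}$ for every choice of parameters.

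For the lower bound fix parameters outside the exceptional sets of Theorem \ref{thm:main}. Since every $\mu_{\underline{\lambda},\underline{t}}^{\mathbf{p}}$ is supported on $A_{\underline{\lambda},\underline{t}}$, Theorem \ref{thm:main} yields
$$\dim_{H}(A_{\underline{\lambda},\underline{t}}) \ge \sup_{\mathbf{p}}\,\min\bigg\{1,\frac{-\sum p_{i,j}\log p_{i,j}+\Phi(\mathbf{p})}{-\sum p_{i,j}\log\lambda_{i,j}}\bigg\},$$
where the supremum is taken over all probability vectors on $\{(i,j)\}$. It therefore suffices to construct a probability vector $\mathbf{p}^{*}$ whose ratio attains $s_0$ (or a sequence $\mathbf{p}_n$ whose ratios tend to $s_0$ from below when $s_0>1$). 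The natural candidate arises from the Parry/Gibbs-type Markov measure on the subshift $\tilde{\mathcal{S}}$ with transition weights $\prod_j\lambda_{i,j}^{s_0 a_j}$ and left-eigenvector correction coming from $M(s_0)$; projecting that Markov measure to a single-symbol frequency measure on the alphabet $\{(i,j)\}$ furnishes $\mathbf{p}^{*}$ explicitly. The renewal interpretation of $\Phi$ recalled after Theorem \ref{thm:main} via the variables $k_{X_1}, Y_{X_1}$, together with the spectral identity $\sum_i\prod_j(1-\lambda_{i,j}^{s_0})=N-1$, should then force the cancellation $-\sum p^{*}_{i,j}\log p^{*}_{i,j}+\Phi(\mathbf{p}^{*})=s_0\chi(\mathbf{p}^{*})$, whence $\dim_{H}(\mu_{\underline{\lambda},\underline{t}}^{\mathbf{p}^{*}}) = \min\{1,s_0\}$.

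The delicate step is this variational identity. The expression \eqref{eq:phip} couples the coordinates of $\mathbf{p}$ through an infinite binomial series, so the inequality ``$\ge s_0$'' is not accessible by a routine Lagrange computation; one must exploit the renewal representation of $\Phi$ and the spectral equation at $s_0$ to identify the Markov-based $\mathbf{p}^{*}$ above as the maximiser. The reverse inequality ``$\le s_0$'' is bypassed thanks to the unconditional upper bound, which removes the need to maximise $\Phi$ by hand. A secondary technical issue is the boundary behaviour when $s_0\ge 1$: the optimum may lie on the boundary of the simplex and must be approximated by full-support perturbations of $\mathbf{p}^{*}$, or equivalently by slightly perturbing the parameter $s_0$ in the Gibbs construction, to realise $\dim_{H}(\mu^{\mathbf{p}_n})\nearrow 1$.
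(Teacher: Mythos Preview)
Your upper bound via the countable block-alphabet reorganisation and the transfer matrix $M(s)$ is essentially the paper's Lemma~\ref{lem:ub} (there the matrix is written $C_\infty^{(s)}$), so that half is fine.

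The lower bound, however, cannot be extracted from Theorem~\ref{thm:main} in the way you propose, and the obstruction is not merely that the variational identity is delicate: when some $n_i\ge 2$ it actually fails. Pushing a Bernoulli-$\mathbf{p}$ sequence on $I^{\mathbb N}$ forward to the block alphabet assigns to a multiset block $(i,\vec a)$ a mass proportional to $\binom{|\vec a|}{\vec a}\prod_j p_{i,j}^{a_j}$, whereas the Parry/Gibbs equilibrium weight on the block shift is proportional to $\prod_j\lambda_{i,j}^{s_0 a_j}$ with \emph{no} multinomial factor. These two one-parameter families of block distributions are incompatible for every choice of $\mathbf{p}$ once $n_i\ge 2$, so the equilibrium state on the block shift is never a Bernoulli-induced measure; by uniqueness of the equilibrium state one gets $\sup_{\mathbf p}(h_{\mathbf p}+\Phi(\mathbf p))/\chi(\mathbf p)<s_0$ strictly. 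In particular your candidate $\mathbf p^\ast$ (the symbol-frequency vector of the Gibbs measure) produces a genuinely different measure $\mu^{\mathbf p^\ast}$ of strictly smaller dimension, and no perturbation rescues it. The paper therefore takes a different route for the lower bound: it truncates block lengths at $n$ to obtain a \emph{finite} graph-directed self-similar IFS $\mathcal S_n$ with attractor $A_n\subseteq A$, checks that ESC for CFS on $\mathcal S$ descends to an exponential separation condition for $\mathcal S_n$ (Lemma~\ref{lem:ESCGD}), and then invokes the Prokaj--Simon dimension formula for graph-directed systems to get $\dim_H A_n=\min\{1,s_n\}$ with $\rho(B_n^{(s_n)})=1$. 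A determinant computation (Lemmas~\ref{lem:tehc} and~\ref{lem:limit}) then shows $s_n\nearrow s_0$. The graph-directed framework delivers the \emph{set} dimension of the subsystem directly, bypassing the restriction to Bernoulli self-similar measures on the original alphabet that blocks your approach.
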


This significantly strengthens the previous results in \cite{Barany1, Barany2, Barany4corner, BS2021}.

\section{Applications and discussion}

\subsection{Typical self-similar systems} Before we prove our main result, Theorem~\ref{thm:main}, we show some applications of it. First, we show that the assumption $\max_{i\neq j}\{|r_i|+|r_j|\}<1$ in \cite[Corollary~6.8.12]{BSS23} can be strengthened.

\begin{theorem}\label{thm:main2}
    Let $\mathcal{I}=\{f_i(x)=r_ix+c_i\}_{i=1}^N$ be a self-similar IFS. Then there exists a set $\mathbf{E}\subset(0,1)^{N}$ with $\dim_H(\mathbf{E})\leq N-1$ such that for every $(r_{i})_{i=1}^N\in(0,1)^{N}\setminus\mathbf{E}$ there exists a set $\mathbf{F}\subset\mathbb{R}^N$ with $\dim_H(\mathbf{F})\leq N-1$ such that for every $(c_i)_{i=1}^N\in\mathbb{R}^N\setminus\mathbf{F}$ the following holds: for every self-similar measure $\mu$ corresponding to the self-similar IFS $\mathcal{I}$ and the probability vector $\mathbf{p}=(p_{i})_{i=1}^{N}$
		$$\dim_{H}(\mu)=\min\bigg\{1, \frac{-\sum_{i=1}^{N}p_{i}\log(p_{i})}{-\sum_{i=1}^{N}p_{i}\log(r_{i})}\bigg\}.$$
\end{theorem}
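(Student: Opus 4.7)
The strategy is to obtain Theorem~\ref{thm:main2} as a direct specialization of Theorem~\ref{thm:main} in the case where every fixed point is simple, i.e.\ $n_i=1$ for all $i$. Each contraction $f_i(x)=r_ix+c_i$ has the unique fixed point $t_i=c_i/(1-r_i)$, so it can be rewritten as $f_i(x)=r_ix+t_i(1-r_i)$. This puts $\mathcal{I}$ in the exact form of \eqref{eq:mainIFS} with $N$ distinct fixed points $(t_i)_{i=1}^N$ and multiplicities $n_i=1$; in particular $L=N$.

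Next, I would verify that the extra correction term $\Phi(\mathbf{p})$ in \eqref{eq:phip} vanishes identically in this regime. When $n_l=1$ the only admissible value is $m=1$, and the inner sum $\sum_{j\neq m}^{n_l}p_{l,j}$ is empty, hence zero. Thus the factor $\bigl(\sum_{j\neq m}p_{l,j}\bigr)^{k-q}$ equals $0$ unless $k=q$ (using the convention $0^0=1$), and when $k=q$ the logarithmic factor $\log((q+1)/(k+1))=\log 1$ vanishes. Every summand of \eqref{eq:phip} is therefore zero, and Theorem~\ref{thm:main} reduces to the formula claimed in Theorem~\ref{thm:main2}.

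Theorem~\ref{thm:main} then produces an exceptional set $\mathbf{E}\subset(0,1)^N$ with $\dim_H(\mathbf{E})\leq N-1$ and, for each $(r_i)_{i=1}^N\notin\mathbf{E}$, a set $\widetilde{\mathbf{F}}\subset\mathbb{R}^N$ in the fixed-point coordinates with $\dim_H(\widetilde{\mathbf{F}})\leq N-1$ outside of which the dimension formula holds. For this same $(r_i)_{i=1}^N$, the affine map
\[
\Psi_{\underline{r}}:\mathbb{R}^N\to\mathbb{R}^N, \qquad (c_i)_{i=1}^N\mapsto \bigl(c_i/(1-r_i)\bigr)_{i=1}^N,
\]
is a linear bijection with bounded inverse, hence bi-Lipschitz. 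Consequently $\mathbf{F}:=\Psi_{\underline{r}}^{-1}(\widetilde{\mathbf{F}})$ satisfies $\dim_H(\mathbf{F})\leq N-1$, and this is the exceptional set required by Theorem~\ref{thm:main2}.

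There is no substantial obstacle once Theorem~\ref{thm:main} is in hand; the argument is essentially a reparametrization combined with the observation that the entropy correction $\Phi(\mathbf{p})$ collapses to zero when no two maps share a fixed point. The only point deserving care is the bookkeeping that the affine change of variables $\underline{c}\leftrightarrow\underline{t}$ preserves the $(N-1)$-dimensional bound on the exceptional set of translations, which is immediate from the bi-Lipschitz property of $\Psi_{\underline{r}}$.
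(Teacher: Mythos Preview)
Your proposal is correct and follows the same approach as the paper: specialize Theorem~\ref{thm:main} to $n_i=1$ and observe that $\Phi(\mathbf{p})\equiv 0$. You are in fact more careful than the paper, which simply asserts ``the claim follows'' without mentioning the change of variables $\underline{c}\leftrightarrow\underline{t}$; your bi-Lipschitz argument for transferring the exceptional set $\widetilde{\mathbf{F}}$ in fixed-point coordinates to $\mathbf{F}$ in translation coordinates is a genuine detail the paper omits. One small point you might add for completeness: the hypothesis of \eqref{eq:mainIFS} requires the $t_i$ to be pairwise distinct, so the locus $\{t_i=t_j\}$ for some $i\neq j$ (an $(N-1)$-dimensional set) should also be absorbed into $\widetilde{\mathbf{F}}$, but this is harmless.
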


\begin{proof}
    Since $n_i=1$ for every $i=1,\ldots,N$, it is easy to see that the expression $\Phi(\mathbf{p})$ in Theorem~\ref{thm:main} equals $0$. Thus, the claim follows.
\end{proof}

\subsection{Estimating the \texorpdfstring{$\Phi(\mathbf{p})$}{Φ(p)}} Now, we will establish a bound for $\Phi(\mathbf{p})$, which might help estimate from below the Hausdorff dimension of self-similar measures with common fixed point structure.

\begin{proposition}\label{lem:lowerbound} Let $\mathbf{p}=(p_{i,j})_{1\leq i\leq N, 1\leq j\leq n_i}$ be a probability vector and let $\Phi(\mathbf{p})$ be defined as in \eqref{eq:phip}. Then
	$$\Phi(\mathbf{p}) \geq \sum_{l=1}^{N}\sum_{m=1}^{n_{l}}p_{l,{m}} \log \bigg(p_{l,{m}}+\sum_{\substack{i\ne l\\ i=1}}^{N}\sum_{j=1}^{n_i}p_{i,j}\bigg).$$
\end{proposition}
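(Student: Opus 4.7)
The plan is to use the probabilistic interpretation $\Phi(\mathbf{p}) = \mathbb{E}[\log(Y_{X_1}/(k_{X_1}-1))]$ given immediately after Theorem~\ref{thm:main}, together with a conditional Jensen's inequality. Writing this expectation as $\sum_{l,m} p_{l,m}\,\mathbb{E}[\log(Y_{l,m}/(k_{l,m}-1)) \mid X_1 = (l,m)]$, it suffices to prove, for every fixed pair $(l,m)$, the pointwise lower bound
$$\mathbb{E}\bigg[\log\frac{Y_{l,m}}{k_{l,m}-1}\,\bigg|\, X_1 = (l,m)\bigg] \geq \log\bigg(p_{l,m} + \sum_{i\neq l}\sum_{j=1}^{n_i} p_{i,j}\bigg),$$
after which weighting by $p_{l,m}$ and summing over $(l,m)$ yields the proposition.

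For fixed $(l,m)$, I would classify each $X_r$ into three categories: \emph{type-$m$} ($X_r = (l,m)$, probability $\alpha := p_{l,m}$), \emph{type-$\beta$} ($X_r = (l,j)$ with $j\neq m$, probability $\beta := \sum_{j\neq m} p_{l,j}$), and \emph{type-exit} ($X_r \notin \{(l,1),\dots,(l,n_l)\}$, probability $1-\alpha-\beta$). Let $Z_{l,m}$ be the number of type-$\beta$ steps among $X_1,\dots,X_{k_{l,m}-1}$; then $k_{l,m}-1 = Y_{l,m} + Z_{l,m}$. By a standard renewal observation on i.i.d.\ ternary sequences, conditionally on $X_1 = (l,m)$ and $Y_{l,m} = y$, the variable $Z_{l,m}$ is distributed as $\sum_{j=1}^{y} G_j$ with $G_j$ i.i.d.\ $\mathrm{Geom}(1-\beta)$ of mean $\beta/(1-\beta)$, so that
$$\mathbb{E}[k_{l,m}-1 \mid Y_{l,m}=y,\, X_1=(l,m)] = y + \frac{y\beta}{1-\beta} = \frac{y}{1-\beta}.$$
Applying Jensen's inequality to the concave function $\log$ then gives $\mathbb{E}[\log(k_{l,m}-1) \mid Y_{l,m}=y, X_1=(l,m)] \leq \log(y/(1-\beta))$, whence
$$\mathbb{E}\bigg[\log\frac{Y_{l,m}}{k_{l,m}-1}\,\bigg|\, Y_{l,m}=y,\, X_1=(l,m)\bigg] \geq \log y - \log\frac{y}{1-\beta} = \log(1-\beta),$$
a lower bound independent of $y$. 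Averaging over $y$ and observing that $1-\beta = p_{l,m} + \sum_{i\neq l}\sum_j p_{i,j}$ completes the argument.

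The only non-routine point is the renewal-type decomposition of $Z_{l,m}$ conditionally on $Y_{l,m}$; this is a direct consequence of the i.i.d.\ structure, since in any i.i.d.\ ternary sequence the gap lengths between consecutive non-type-$\beta$ events are i.i.d.\ $\mathrm{Geom}(1-\beta)$ and independent of the identities of those events. Everything else is a single conditional Jensen, and the bound is sharp when $\beta=0$, where both sides reduce to $0$.
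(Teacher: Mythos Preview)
Your proof is correct, and the renewal decomposition is indeed valid: writing the non-type-$\beta$ events among $X_2,X_3,\dots$ as an i.i.d.\ Bernoulli sequence (type-$m$ vs.\ exit) independent of the i.i.d.\ $\mathrm{Geom}(1-\beta)$ gaps, conditioning on $Y_{l,m}=y$ fixes the number of non-type-$\beta$ events before and including the first exit to be exactly $y$, and $Z_{l,m}$ is the sum of the corresponding $y$ gap variables.

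The paper's proof starts from the same probabilistic representation and the same conditioning on $X_1=(l,m)$, but then applies Jensen directly to the ratio $(k_{l,m}-1)/Y_{l,m}$ rather than conditioning further on $Y_{l,m}$. This reduces the problem to computing $\mathbb{E}[(k_{l,m}-1)/Y_{l,m}\mid X_1=(l,m)]$, which the paper does by an explicit double-sum manipulation (a binomial identity followed by two geometric series), arriving at $(1-\beta)^{-1}$. Your approach replaces that computation with the structural observation that $\mathbb{E}[k_{l,m}-1\mid Y_{l,m}=y]=y/(1-\beta)$, so the ratio has conditional mean $1/(1-\beta)$ \emph{for every} $y$; this is cleaner and explains \emph{why} the answer is so simple, at the cost of having to justify the geometric-gap decomposition. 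The two Jensens coincide here precisely because the conditional mean of $(k_{l,m}-1)/Y_{l,m}$ given $Y_{l,m}$ is constant, so neither route is strictly sharper than the other.
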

\begin{proof}
Let us recall the representation of $\Phi(\mathbf{p})$ from Section~\ref{sec:intro}. Suppose $X_{1},X_{2},\dots$ are independent and identically distributed (i.i.d.) random variables such that $\mathbb{P}(X_r=(i,j))=p_{i,j}$ for all  $i\in\{1,\dots,N\}$ and $j\in \{1,2,\dots,n_i\}$. Let $k_{i,j}:=\min\{r\geq 1: X_r\notin\{(i,1),\ldots,(i,n_i)\} \},$ and let $Y_{i,j}:=\#\{1\leq r\leq k_{i,j}-1: X_r=(i,j)\}$ for all $j\in \{1,2,\dots,n_i\}.$ Then, the quantity $\Phi(\mathbf{p})$ can be written in the following form 
	$$ \Phi(\mathbf{p})= \mathbb{E}\bigg(\log\bigg(\frac{Y_{X_1}}{k_{X_1}-1}\bigg)\bigg)=\sum_{l=1}^{N}\sum_{m=1}^{n_{l}}p_{l,{m}} \mathbb{E}\bigg(\log\bigg(\frac{Y_{l,m}}{k_{l,m}-1}\bigg) \bigg| X_{1}=(l,m)\bigg),$$ where
	\begin{align*}
	&\mathbb{E}\bigg(\log\bigg(\frac{Y_{l,m}}{k_{l,m}-1}\bigg) \bigg| X_{1}=(l,m)\bigg)\\
    &\quad\quad= \sum_{k=0}^{\infty}\sum_{q=0}^{k}\binom{k}{q}p_{l,m}^q\left(\sum_{\substack{j=1\\j\neq m}}^{n_l}p_{l,j}\right)^{k-q}\bigg(1-\sum_{j=1}^{n_l}p_{l,j}\bigg)\log\bigg(\frac{q+1}{k+1}\bigg).
	\end{align*}
Clearly,
\begin{align*}
\mathbb{E}\bigg(\log\bigg(\frac{Y_{l,m}}{k_{l,m}-1}\bigg) \bigg| X_{1}=(l,m)\bigg)&=\mathbb{E}\bigg(-\log\bigg(\frac{k_{l,m}-1}{Y_{l,m}}\bigg) \bigg| X_{1}=(l,m)\bigg)\\
&\geq -\log \mathbb{E}\bigg(\frac{k_{l,m}-1}{Y_{l,m}} \bigg| X_{1}=(l,m)\bigg),
\end{align*}
where we used Jensen's inequality. Then
\begin{align*}
\mathbb{E}\bigg(\frac{k_{l,m}-1}{Y_{X_r}}| X_{1}=(l,m)\bigg)&=\sum_{k=0}^{\infty}\sum_{q=0}^{k}\binom{k}{q}p_{l,m}^q\left(\sum_{\substack{j=1\\j\neq m}}^{n_l}p_{l,j}\right)^{k-q}\bigg(1-\sum_{j=1}^{n_l}p_{l,j}\bigg)\frac{k+1}{q+1}\\
&=\sum_{k=0}^{\infty}\sum_{q=0}^{k}\binom{k+1}{q+1}p_{l,m}^{q}\left(\sum_{\substack{j=1\\j\neq m}}^{n_l}p_{l,j}\right)^{k-q}\bigg(1-\sum_{j=1}^{n_l}p_{l,j}\bigg)\\
&=\sum_{k=1}^{\infty}\bigg(1-\sum_{j=1}^{n_l}p_{l,j}\bigg)\frac{1}{p_{l,m}}\left(\bigg(\sum_{j=1}^{n_l}p_{l_j}\bigg)^{k}-\bigg(\sum_{\substack{j=1\\j\ne m}}^{n_l}p_{l_j}\bigg)^{k}\right)\\
&=\bigg(1-\sum_{j=1}^{n_l}p_{l_j}\bigg)\frac{1}{p_{l,m}}\bigg(\frac{\sum_{j=1}^{n_l}p_{l,j}}{1-\sum_{j=1}^{n_l}p_{l,j}}-\frac{\sum_{j=1,j\ne m}^{n_l}p_{l,j}}{1-\sum_{j=1,j\ne m}^{n_l}p_{l,j}}\bigg)\\
&=\left(1-\sum_{\substack{j=1\\j\ne m}}^{n_l}p_{l,j}\right)^{-1}=\left(p_{l,{m}}+\sum_{\substack{i\ne l\\ i=1}}^{N}\sum_{j=1}^{n_i}p_{i,j}\right)^{-1}
\end{align*}
Thus, from the above, we have 
$$\mathbb{E}\bigg(\log\bigg(\frac{Y_{l,m}}{k_{l,m}-1}\bigg) \bigg| X_{1}=(l,m))\bigg)\geq \log \bigg(p_{l,{m}}+\sum_{\substack{i\ne l\\ i=1}}^{N}\sum_{j=1}^{n_i}p_{i,j}\bigg).$$ This implies that 
$$\Phi(\mathbf{p}) \geq \sum_{l=1}^{N}\sum_{m=1}^{n_{l}}p_{l,{m}} \log \bigg(p_{l,{m}}+\sum_{\substack{i\ne l\\ i=1}}^{N}\sum_{j=1}^{n_i}p_{i,j}\bigg).$$
This completes the proof.
\end{proof}

\subsection{Generalised 4-corner set} Using Theorem~\ref{thm:main}, we can also calculate the Hausdorff dimension of self-affine measures supported on the generalised $4$-corner set. Let 
\begin{equation}\label{eq:IFS4corner}
\begin{array}{cc}
F_{1}(x,y)=(\gamma_{1,1}x,\lambda_{1,1}y), &F _{2}(x,y)=(\gamma_{1,2}x,\lambda_{2,1}y+(1-\lambda_{2,1})),\\ 
F_{3}(x,y)=(\gamma_{2,1}x+(1-\gamma_{2,1}),\lambda_{1,2}y), &F_{4}(x,y)=(\gamma_{2,2}x+(1-\gamma_{2,2}),\lambda_{2,2}y+(1-\lambda_{2,2}))
\end{array}
\end{equation}
be a self-affine IFS on $\mathbb{R}^2$ such that 
\begin{equation}\label{eq:cond4corner}
\left.\begin{array}{c}
0<\gamma_{i,j},\lambda_{i,j}\text{ for }i=1,2, j=1,2,\\
\gamma_{1,1}+\gamma_{2,1}\leq 1,\ \gamma_{1,2}+\gamma_{2,2}\leq 1,\ \lambda_{1,1}+\lambda_{2,1}\leq1,\ \lambda_{1,2}+\lambda_{2,2}\leq1,\\
\min\{\gamma_{1,2}+\gamma_{2,1},\lambda_{1,2}+\lambda_{2,1}\}\leq1\text{ and }\min\{\gamma_{1,1}+\gamma_{2,2},\lambda_{1,1}+\lambda_{2,2}\}\leq1.
\end{array}\right\}
\end{equation}
The conditions in \eqref{eq:cond4corner} guarantee that the IFS $\{F_1,F_2,F_3,F_4\}$ satisfies the rectangular open set condition. In particular, $F_i((0,1)^2)\subset(0,1)^2$ and $F_i((0,1)^2)\cap F_j((0,1)^2)=\emptyset$ for every $i\neq j\in\{1,2,3,4\}$. We call the attractor $C=\bigcup_{i=1}^4F_i(C)$ of $\{F_1,\ldots,F_4\}$ the generalized 4-corner set. In \cite{Barany4corner}, the first author studied the box-counting dimension of the 4-corner set for Lebesgue typical parameters. Now, we determine the Hausdorff dimension of self-affine measures supported on $C$ for typical parameters in a certain sense, and the Hausdorff dimension of the generalised 4-corner set in a region of parameters.

Let $\mathbf{p}=(p_1,\ldots,p_4)$ be a probability vector, and let
\begin{align*}
\chi_x(\mathbf{p})&=-p_1\log\gamma_{1,1}-p_2\log\gamma_{1,2}-p_3\log\gamma_{2,1}-p_4\log\gamma_{2,2},\\
\chi_y(\mathbf{p})&=-p_1\log\lambda_{1,1}-p_2\log\lambda_{2,1}-p_3\log\lambda_{1,2}-p_4\log\lambda_{2,2}.
\end{align*}

\begin{theorem}\label{thm:4corner} Let $\{F_1,F_2,F_3,F_4\}$ be a self-affine IFS as in \eqref{eq:IFS4corner}. There exist sets $\mathbf{E}_x,\mathbf{E}_y\subset(0,1)^{4}$ with $\dim_H(\mathbf{E}_x),\dim_H(\mathbf{E}_y)\leq 3$ such that for every $(\gamma_{i,j})_{i=1,j=1}^{2,2}\in(0,1)^{4}\setminus\mathbf{E}_x$ and $(\lambda_{i,j})_{i=1,j=1}^{2,2}\in(0,1)^{4}\setminus\mathbf{E}_y$ with \eqref{eq:cond4corner} the following holds: for every self-affine measure $\mu$ corresponding to the IFS $\{F_1,F_2,F_3,F_4\}$ and the probability vector $\mathbf{p}=(p_{1},p_2,p_3,p_4)$
		$$\dim_{H}(\mu)=\begin{cases}
		    \dfrac{h_{\mathbf{p}}+\Phi_x(\mathbf{p})}{\chi_x(\mathbf{p})}-\dfrac{\Phi_x(\mathbf{p})}{\chi_y(\mathbf{p})} & \text{if }\chi_y(\mathbf{p})\geq\chi_x(\mathbf{p})\geq h_{\mathbf{p}}+\Phi_x(\mathbf{p})\\[10pt]
           1+\dfrac{h_{\mathbf{p}}-\chi_x(\mathbf{p})}{\chi_y(\mathbf{p})} & \text{if }\chi_y(\mathbf{p}),h_{\mathbf{p}}+\Phi_x(\mathbf{p})\geq\chi_x(\mathbf{p}), \\[10pt]
            \dfrac{h_{\mathbf{p}}+\Phi_y(\mathbf{p})}{\chi_y(\mathbf{p})}-\dfrac{\Phi_y(\mathbf{p})}{\chi_x(\mathbf{p})} & \text{if }\chi_x(\mathbf{p})\geq\chi_y(\mathbf{p})\geq h_{\mathbf{p}}+\Phi_y(\mathbf{p}),\\[10pt]
            1+\dfrac{h_{\mathbf{p}}-\chi_y(\mathbf{p})}{\chi_x(\mathbf{p})} & \text{if }\chi_x(\mathbf{p}),h_{\mathbf{p}}+\Phi_y(\mathbf{p})\geq\chi_y(\mathbf{p}),
		\end{cases}$$
		where \begin{align*}
		    \Phi_x(\mathbf{p})&=\sum_{k=0}^{\infty}\sum\limits_{q=0}^{k}\binom{k}{q}(p_3+p_4)\left(p_{1}^{q+1}p_{2}^{k-q}+p_2^{q+1}p_1^{k-q}\right)\log(\frac{q+1}{k+1})\\
            &\quad\quad+\sum_{k=0}^{\infty}\sum\limits_{q=0}^{k}\binom{k}{q}(p_1+p_2)\left(p_{3}^{q+1}p_{4}^{k-q}+p_4^{q+1}p_3^{k-q}\right)\log(\frac{q+1}{k+1}),
            \end{align*}
and
\begin{align*}
		    \Phi_y(\mathbf{p})&=\sum_{k=0}^{\infty}\sum\limits_{q=0}^{k}\binom{k}{q}(p_2+p_4)\left(p_{1}^{q+1}p_{3}^{k-q}+p_3^{q+1}p_1^{k-q}\right)\log(\frac{q+1}{k+1})\\
            &\quad\quad+\sum_{k=0}^{\infty}\sum\limits_{q=0}^{k}\binom{k}{q}(p_1+p_3)\left(p_{2}^{q+1}p_{4}^{k-q}+p_4^{q+1}p_2^{k-q}\right)\log(\frac{q+1}{k+1}).
\end{align*}
	\end{theorem}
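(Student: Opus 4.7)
\emph{Reduction via one-dimensional projections.} Since each $F_k$ has a diagonal linear part, the projections $\pi_x\mu$ and $\pi_y\mu$ are self-similar measures on $\mathbb{R}$, each driven by a four-map IFS of the form~\eqref{eq:mainIFS}. Explicitly, the $x$-projected IFS is $\{\gamma_{1,1}x,\ \gamma_{1,2}x,\ \gamma_{2,1}x+(1-\gamma_{2,1}),\ \gamma_{2,2}x+(1-\gamma_{2,2})\}$, which fits the template of \eqref{eq:mainIFS} with $N=2$, $n_1=n_2=2$, fixed points $t_1=0$ and $t_2=1$: $\pi_x F_1$, $\pi_x F_2$ share the fixed point $0$, while $\pi_x F_3$, $\pi_x F_4$ share $1$. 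An analogous description holds for $\pi_y$, with the grouping $\{\pi_y F_1,\pi_y F_3\}$ at $0$ and $\{\pi_y F_2,\pi_y F_4\}$ at $1$. A direct substitution into \eqref{eq:phip} reproduces exactly the functions $\Phi_x(\mathbf{p})$ and $\Phi_y(\mathbf{p})$ of the statement.

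\emph{Applying Theorem~\ref{thm:main}.} We intend to invoke Theorem~\ref{thm:main} on each projected IFS to obtain $\dim_H(\pi_x\mu)$ and $\dim_H(\pi_y\mu)$. The only subtlety is that Theorem~\ref{thm:main} requires $\underline{t}$ to be non-exceptional, while our application fixes $\underline{t}=(0,1)$. However, the conclusion depends only on $(\underline{\lambda},\mathbf{p})$: any two IFSs of the form~\eqref{eq:mainIFS} with the same $(\underline{\lambda},\mathbf{p})$ but distinct fixed-point vectors (all having $t_1\neq t_2$) are affinely conjugate, so their self-similar measures are affine push-forwards of each other and share the same Hausdorff dimension. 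Since for typical $\underline{\lambda}$ the exceptional $\underline{t}$-set in $\mathbb{R}^2$ has Hausdorff dimension at most one and therefore dense complement, conjugating $(0,1)$ to any non-exceptional $(t_1',t_2')$ transfers the formula back to the original IFS. Performing this for both projections yields, for typical $(\gamma_{i,j})$ and $(\lambda_{i,j})$,
$$\dim_H(\pi_x\mu)=\min\Bigl\{1,\tfrac{h_\mathbf{p}+\Phi_x(\mathbf{p})}{\chi_x(\mathbf{p})}\Bigr\}, \quad \dim_H(\pi_y\mu)=\min\Bigl\{1,\tfrac{h_\mathbf{p}+\Phi_y(\mathbf{p})}{\chi_y(\mathbf{p})}\Bigr\}.$$

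\emph{Assembling the dimension via Ledrappier-Young.} The hypotheses \eqref{eq:cond4corner} ensure that $\{F_1,\ldots,F_4\}$ satisfies the rectangular open set condition, and Feng-Hu's theorem gives the exact dimensionality of $\mu$. For planar self-affine measures with diagonal matrices and ROSC, the Ledrappier-Young formula reduces to the explicit expression (taking $\chi_y(\mathbf{p})\geq\chi_x(\mathbf{p})$ without loss of generality)
$$\dim_H(\mu)=\dim_H(\pi_x\mu)+\frac{h_\mathbf{p}-\dim_H(\pi_x\mu)\cdot\chi_x(\mathbf{p})}{\chi_y(\mathbf{p})}.$$
Plugging in the two possible values of $\dim_H(\pi_x\mu)$, namely $(h_\mathbf{p}+\Phi_x)/\chi_x$ when this quantity is below one and $1$ otherwise, recovers the first two cases of the statement; the remaining two cases follow by exchanging the roles of $x$ and $y$.

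\emph{Main obstacle.} The principal technical input is the Ledrappier-Young formula above applied when the projected measure itself undergoes a dimension drop caused by the overlaps from common fixed points. Concretely, one must justify identifying the ``projected entropy'' with $\dim_H(\pi_x\mu)\cdot\chi_x(\mathbf{p})$. This follows from the exact dimensionality of $\pi_x\mu$ together with the ROSC on the full planar cylinders, but requires careful citation of Feng's dimension theorem for affine IFS in its most general form.
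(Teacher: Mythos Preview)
Your proposal is correct and follows the same route as the paper's proof, which simply states that the result is a direct consequence of Theorem~\ref{thm:main} and Feng--Hu \cite[Theorem~2.11]{FengandHu}; you have merely spelled out the steps (identifying the projected IFSs as common-fixed-point systems with $N=2$, $n_1=n_2=2$, invoking the Ledrappier--Young formula under ROSC, and splitting into cases according to whether the projected dimension saturates at $1$). Your affine-conjugacy remark resolving the fixed choice $\underline t=(0,1)$ is a valid way to bypass the exceptional set $\mathbf F$; alternatively, one can note that the paper's two-fixed-point analysis in Proposition~\ref{Exceptionfor2fixedpoint} is already carried out directly for $t_1=0$, $t_2=1$, so no exceptional translation set arises in that case.
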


\begin{proof}
    This is a direct consequence of Theorem~\ref{thm:main} and Feng and Hu's theorem \cite[Theorem~2.11]{FengandHu}.
\end{proof}

\begin{example}\label{ex:4corner} Theorem~\ref{thm:4corner} allows us to calculate the Hausdorff dimension of the generalised 4-corner set for Lebesgue typical points on a certain region of parameters. Let us further suppose that 
\begin{equation}\label{eq:cond4corner+}
\lambda_{1,1}\leq \gamma_{1,1},\ \lambda_{2,2}\leq\gamma_{2,2},\ \lambda_{1,2}\leq\gamma_{2,1}\text{ and }\lambda_{2,1}\leq\gamma_{1,2}.
\end{equation}
This implies that $\chi_x(\mathbf{p})\leq\chi_y(\mathbf{p})$ for any probability vector $\mathbf{p}$. Consider the probability vector
$$
p_1=\gamma_{1,1}\lambda_{1,1}^{s-1},\ p_2=\gamma_{1,2}\lambda_{2,1}^{s-1},\ p_3=\gamma_{2,1}\lambda_{1,2}^{s-1}\text{ and }p_4=\gamma_{2,2}\lambda_{2,2}^{s-1},
$$
where $s$ is the unique solution of the equation $\sum_{i=1}^2\gamma_{i,i}\lambda_{i,i}^{s-1}+\gamma_{i,3-i}\lambda_{3-i,i}^{s-1}=1$. It is well known that $\dim_HC\leq s$, see for example Falconer and Miao \cite{FalMia2007}. Thus, it is enough to show that the corresponding self-affine measure $\mu$ has Hausdorff dimension $s=1+\frac{h_{\mathbf{p}}-\chi_x(\mathbf{p})}{\chi_y(\mathbf{p})}$. For that, it is enough to verify that $h_{\mathbf{p}}+\Phi_x(\mathbf{p})\geq\chi_x(\mathbf{p})$ by Theorem~\ref{thm:4corner}. Lemma~\ref{lem:lowerbound} implies that this holds if
\begin{multline}\label{eq:suff}
\gamma_{1,1}\lambda_{1,1}^{s-1}\log(\frac{1-\gamma_{1,2}\lambda_{2,1}^{s-1}}{\lambda_{1,1}^{s-1}})+\gamma_{1,2}\lambda_{2,1}^{s-1}\log(\frac{1-\gamma_{1,1}\lambda_{1,1}^{s-1}}{\lambda_{2,1}^{s-1}})\\
+\gamma_{2,1}\lambda_{1,2}^{s-1}\log(\frac{1-\gamma_{2,2}\lambda_{2,2}^{s-1}}{\lambda_{1,2}^{s-1}})+\gamma_{2,2}\lambda_{2,2}^{s-1}\log(\frac{1-\gamma_{2,1}\lambda_{1,2}^{s-1}}{\lambda_{2,2}^{s-1}})>0.
\end{multline}
One can check numerically that a small neighborhood of the parameters $\gamma_{1,1}=\gamma_{2,2}=0.8$, $\gamma_{1,2}=\gamma_{2,1}=0.1$, $\lambda_{1,1}=\lambda_{2,2}=0.45$ and $\lambda_{2,1}=\lambda_{1,2}=0.09$ satisfy all three assumptions \eqref{eq:cond4corner}, \eqref{eq:cond4corner+} and \eqref{eq:suff}. For a visual representation of the generalised 4-corner set, see Figure~\ref{fig1}.
\end{example}

\begin{figure}
    \centering
    \includegraphics[width=0.48\linewidth]{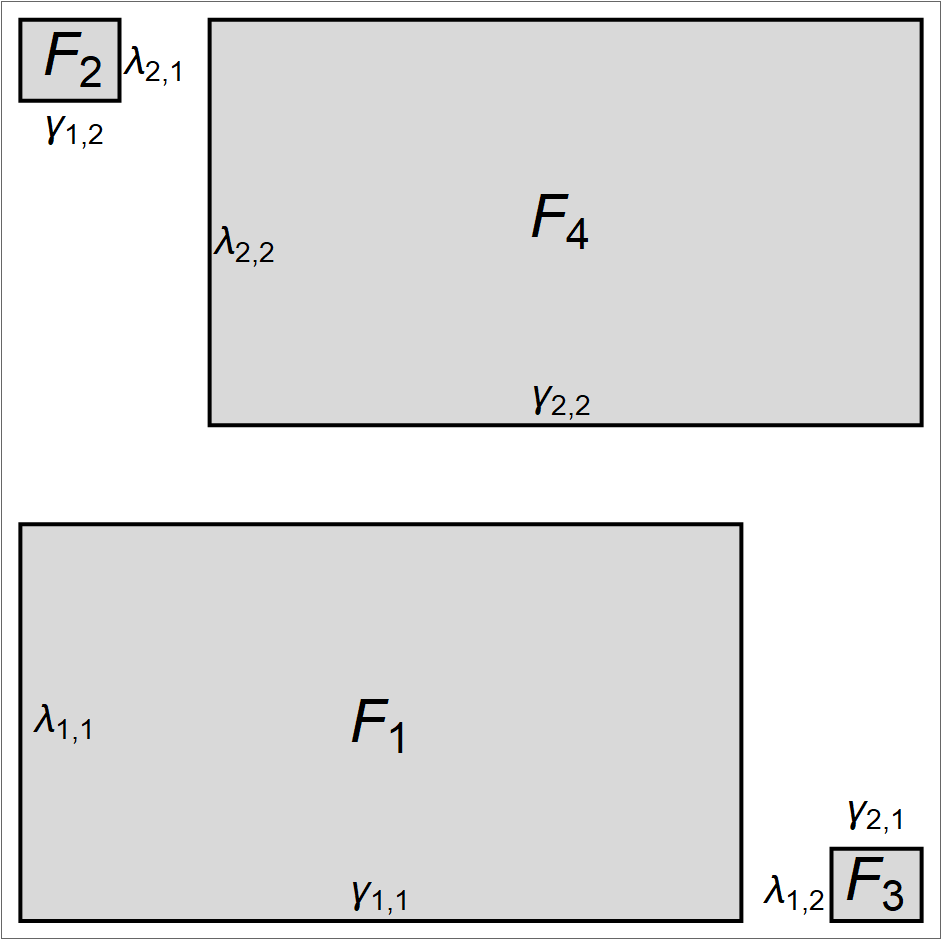}\hfill\includegraphics[width=0.48\linewidth]{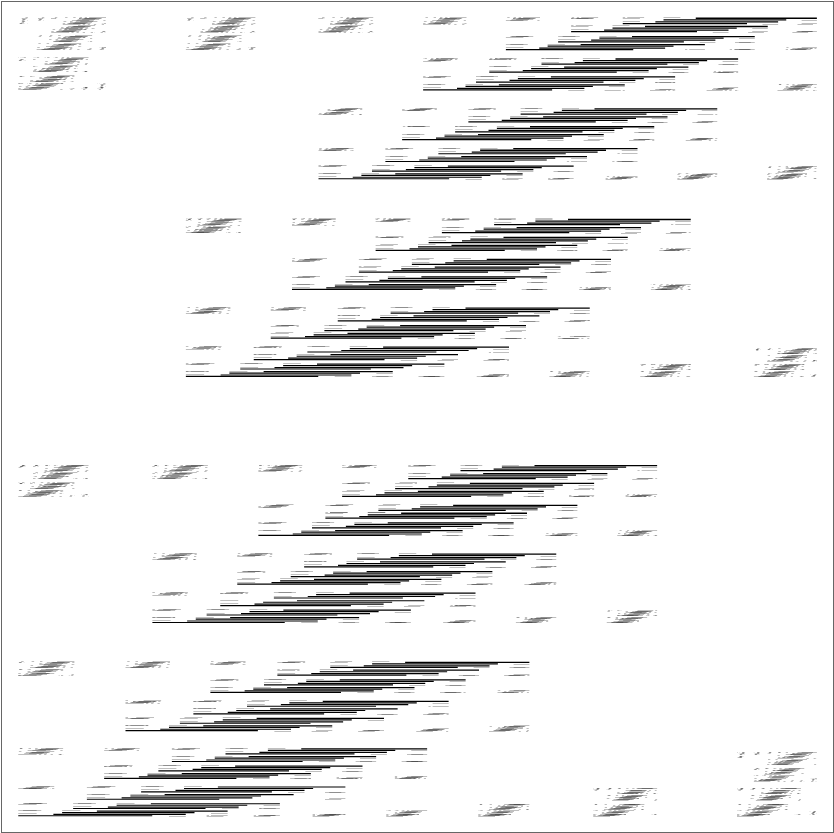}
    \caption{The first level cylinders (left) and the attractor (right) of the generalised 4-corner set in Example~\ref{ex:4corner}.}
    \label{fig1}
\end{figure}

\subsection{Outline of the argument:} {Here, we provide an outline of the arguments of our main results, Theorem \ref{thm:main} and Theorem \ref{dimattractor}. The self-similar IFS $\mathcal{S}_{\underline{\lambda},\underline{t}}$ we consider has many exact overlaps at each level because many maps share the same fixed point. This implies our IFS does not satisfy the exponential separation condition. Thus, we cannot apply Theorem~\ref{MainHochmanresult} by Hochman \cite{Hochman2014} directly. However, the weaker version, the weak exponential separation condition (see Definition \ref{WESC}) allows exact overlaps, and by a direct consequence of Hochman's seminal work \cite{Hochman2014}, Breuillard and Varj\'u \cite{BRVAR} determined the Hausdorff dimension of the self-similar measures in terms of random walk entropy under the WESC. However, they provided the proof only for the case of homogeneous similarity ratios. In the next section (Section \ref{Sec3Pre}), following the arguments of Breuillard and Varj\'u \cite{BRVAR}, we provide a proof for Hausdorff dimension of the general self-similar measure under the WESC (see Theorem \ref{MRandom}). This result is one of the main ingredients for proving our main result Theorem \ref{thm:main}. In the section \ref{ProofWESCforCFS}, we define a stronger version of WESC, namely the exponential separation condition for common fixed point systems (ESC for CFS, see Definition \ref{DefWESCCFS}), which is compatible with our self-similar IFS $\mathcal{S}_{\underline{\lambda},\underline{t}}$. In that section, we prove the self-similar IFS $\mathcal{S}_{\underline{\lambda},\underline{t}}$ satisfies ESC for CFS for typical parameters. One key fact is that ESC for CFS provides a complete description of the exactly overlapping cylinders in the symbolic space. This is a significant advantage for computing the random walk entropy, which is otherwise a highly challenging task. In the section \ref{sec:rw}, we compute the value of the random walk entropy for the self-similar IFS $\mathcal{S}_{\underline{\lambda},\underline{t}}$ under ESC for CFS. Now, by combining the results of Section \ref{ProofWESCforCFS} and \ref{sec:rw}, we get Theorem \ref{thm:main}. In Section \ref{resultforattrac}, we prove another of our main theorems, Theorem \ref{dimattractor}. To prove that, we define the associated graph-directed self-similar IFS corresponding to our IFS $\mathcal{S}_{\underline{\lambda},\underline{t}}$. We show that if IFS $\mathcal{S}_{\underline{\lambda},\underline{t}}$ satisfies ESC for CFS, then the associated graph-directed self-similar IFS satisfies the graph-directed version of the ESC. We also prove that, under the ESC, the Hausdorff dimension of the attractor of the associated graph-directed self-similar IFS is determined by the unique exponent for which the spectral radius of the corresponding connection matrix is equal to $1$. This result provides the required lower bound for Theorem \ref{dimattractor}, which coincides with the general upper bound. 

}

	\section{Preliminaries}\label{Sec3Pre}

Before we turn to the proof of our main results, we need to introduce some notations and make some preliminary statements.

\subsection{Symbolic space} 
Let $\Sigma:=\{1,2,\dots,N\}^{\mathbb{N}}$ be the set of all infinite sequences with symbols from $\{1,2,\dots,N\}$. The set $\Sigma$ is the symbolic space corresponding to the IFS $\mathcal{I}=\{f_1,\ldots,f_N\}$. Let $\mathbf{i}=i_{1}i_{2}\dots \in \Sigma$. We define $\mathbf{i}_{|_m}:=i_{1}i_{2}\dots i_{m}$ for all $m\in \mathbb{N}$. We denote the set of all finite sequences of length $n$ with symbols from $\{1,2,\dots, N\}$ by $\Sigma_{n}$. Set $\Sigma^{*}:=\bigcup_{n=1}^\infty\Sigma_{n}.$ The notation $|\mathbf{i}|$ denotes the length of the finite sequence $\mathbf{i}\in \Sigma^*$. For a symbol $i\in\{1,\dots,N\}$ and a finite word $\mathbf{i}\in\Sigma^*$, let $\#_i\mathbf{i}$ be the number of the appearances of the symbol $i$ in the word $\mathbf{i}$.

If $\mathbf{i}=i_1 i_2 \dots i_n\in \Sigma^{*}$ and $\mathbf{j}=j_1 j_2\dots\in \Sigma^{*} \cup \Sigma$, then let $\mathbf{i}*\mathbf{j}=i_1 i_2 \dots i_n j_1 j_2\dots$ be the concatenation of $\mathbf{i}$ and $\mathbf{j}.$ The symbolic space $\Sigma$ equipped with metric $\rho$ is a compact metric space, where the metric $\rho$ is defined as follows
$$\rho(\mathbf{i},\mathbf{j})=2^{-|\mathbf{i}\wedge\mathbf{j}|}$$
for $\mathbf{i},\mathbf{j}\in \Sigma$, where $\mathbf{i}\wedge\mathbf{j}$ denotes the initial largest common segment of $\mathbf{i}$ and $\mathbf{j}$. For $\mathbf{i}=i_1 i_2 \dots i_n\in \Sigma^{*},$ let $f_{\mathbf{i}}=f_{i_1}\circ f_{i_2}\circ \dots \circ f_{i_n}$ and $r_{\mathbf{i}}=r_{i_1} r_{i_2}\dots r_{i_n}.$ The mapping $\Pi: \Sigma \to A$, defined by $$\Pi(\mathbf{i})=\lim_{n\to\infty}f_{i_1}\circ f_{i_2}\circ \dots \circ f_{i_n}(0)=\lim_{n\to \infty}f_{\mathbf{i}_{|_n}}(0),$$
is called the natural projection corresponding to the IFS $\mathcal{I}.$ For $\mathbf{i}=i_1 i_2 \dots i_n\in \Sigma^{*},$ set $\Pi(\mathbf{i}):=f_{i_1}\circ f_{i_2}\circ \dots \circ f_{i_n}(0).$ \par

\subsection{Random walk entropy}

In this section, we provide a more detailed description of the method by Hochman \cite{Hochman2014}.

\begin{definition}
    Let $(X,\mathcal{B},\nu)$ be a probability space and $\xi=\{C_1,C_2,\dots,C_m\}$ be a measurable partition, i.e.  $\cup_{i=1}^mC_{i}=X,$ \(C_i\in \mathcal{B}\) and $C_{i}\cap C_{j}=\emptyset$ for all $i\ne j\in \{1,2,\dots,m\}$. Then, the \texttt{Shannon entropy} $H$ of the measure $\nu$ with respect to the partition $\xi$ is defined as follows
    $$H(\nu,\xi)=-\sum_{i=1}^{m}\nu(C_{i})\log(\nu(C_{i})).$$
\end{definition}

\par
The self-similar IFS $\mathcal{I}$ with the probability vector $\mathbf{p}$ is known as a probabilistic self-similar IFS. We denote the probabilistic self-similar IFS by $(\mathcal{I},\mathbf{p}).$ 
\begin{definition}
    The \texttt{random walk entropy} of the self-similar measure $\mu$ corresponding to the probabilistic self-similar IFS $(\mathcal{I},\mathbf{p})$ is defined as 
$$h_{RW}(\mu):=\lim_{n\to\infty}\frac{H_{n}}{n},$$
    where $H_{n}$ is the Shannon entropy of the measure $\sum_{\mathbf{i}\in \Sigma_{n}}p_{\mathbf{i}}{\delta}_{f_{\mathbf{i}}},$ which is as follows
    $$H_{n}=-\sum_{\mathbf{i}\in \Sigma_{n}}p_{\mathbf{i}}\log\bigg(\sum_{\substack{\mathbf{j}\in \Sigma_{n}\\ f_{\mathbf{i}}\equiv f_{\mathbf{j}}}}p_\mathbf{j}\bigg).$$
\end{definition}


The random walk entropy provides a more sophisticated upper bound on the dimension of the self-similar measures if exact overlaps occur. In particular, for every self-similar measure $\mu$ corresponding to the probabilistic self-similar IFS $(\mathcal{I},\mathbf{p})$, 
$$\dim_{H}(\mu)\leq \min\bigg\{1,\frac{h_{RW}(\mu)}{\chi(\mathbf{p})}\bigg\},$$
see \cite[Theorem~3.2.7]{BSS23}.

We now consider some of the following notation from Hochman's work \cite{Hochman2014}.
 For $n\in \mathbb{N},$ $\mathcal{D}_{n}$ denote the partition of $\mathbb{R}$ into intervals of length $2^{-n},$ i.e. $$\mathcal{D}_{n}=\bigg\{\bigg[\frac{k}{2^n},\frac{k+1}{2^n}\bigg): k\in \mathbb{Z}\bigg\}.$$ The notation $H(\nu, \mathcal{D}_n)$ denotes the Shannon entropy of the probability measure $\nu$ on $\mathbb{R}$ with respect to the partition $\mathcal{D}_n$. For $m<n$, $H(\nu, \mathcal{D}_n|\mathcal{D}_m)$ denotes the conditional entropy and is defined as $H(\nu, \mathcal{D}_n|\mathcal{D}_m):= H(\nu, \mathcal{D}_n)-H(\nu,\mathcal{D}_m).$ \par 
 
 The  probability measure $\tilde{\nu}^{(n)}$ on $\mathbb{R}\times\mathbb{R}$ is defined by
$$\tilde{\nu}^{(n)}=\sum_{\mathbf{i}\in I^n}p_{\mathbf{i}}\delta_{(f_{\mathbf{i}}(0),r_{\mathbf{i}})},$$
where $\delta_x$ denotes the Dirac measure on the point $x$. We denote  $\tilde{\mathcal{D}}_n=\mathcal{D}_n\times \mathcal{F}$ is a partition on $\mathbb{R}\times \mathbb{R}$, where $\mathcal{F}$ is the partition of $\mathbb{R}$ into points. 
\begin{theorem}\cite{Hochman2014}\label{Hochmanentropy}
  Let $\mu$ be the self-similar measure, and the probability measure $\tilde{\nu}^{(n)}$ be defined as above. If $\dim_{H}(\mu)<1$, then
  $$\lim_{n\to \infty}\frac{1}{n'}H(\tilde{\nu}^{(n)},\tilde{\mathcal{D}}_{qn'}|\tilde{\mathcal{D}}_{n'})=0\quad  \forall \quad  q>1,$$
  where $n'=\lfloor n \log(\frac{1}{r})\rfloor$ and $r= \prod_{i=1}^{N}{r_{i}}^{p_i}.$ 
\end{theorem}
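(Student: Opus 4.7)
This is a deep result from Hochman \cite{Hochman2014}, and my plan is to outline the strategy of its original proof rather than attempt an alternative. The heuristic is that if the conditional entropy $H(\tilde{\nu}^{(n)},\tilde{\mathcal{D}}_{qn'}\mid\tilde{\mathcal{D}}_{n'})$ did not decay faster than linearly in $n'$, then the length-$n$ step of the random walk measure $\tilde{\nu}^{(n)}$ would be generating ``new'' positional information at scales below $2^{-n'}$, and this can be shown via an entropy-growth mechanism to force $\dim_H(\mu)=1$.

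First I would record the self-similarity identity
\[
\mu=\sum_{\mathbf{i}\in\Sigma_n}p_{\mathbf{i}}\,f_{\mathbf{i}}\mu=\int f_{x,r}\mu\,d\tilde{\nu}^{(n)}(x,r),\qquad f_{x,r}(y)=ry+x,
\]
which realises $\mu$ as a random affine image of itself governed by $\tilde{\nu}^{(n)}$. The scale $n'=\lfloor n\log(1/r)\rfloor$ with $r=\prod_{i}r_{i}^{p_{i}}$ is, by the law of large numbers, exactly the typical order of $\log(1/|r_{\mathbf{i}}|)$ for a $\mathbf{p}^{\mathbb{N}}$-generic word of length $n$. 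Combining this fact with the scale invariance $H(f_{x,r}\mu,\mathcal{D}_m)=H(\mu,\mathcal{D}_{m-\log_2|r|})+O(1)$ and the standard concavity/subadditivity of Shannon entropy applied to the integral decomposition above, one obtains an inequality of the form
\[
H(\mu,\mathcal{D}_{qn'})\ge H(\tilde{\nu}^{(n)},\tilde{\mathcal{D}}_{qn'}\mid\tilde{\mathcal{D}}_{n'})+H(\mu,\mathcal{D}_{(q-1)n'})-o(n').
\]

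Next I would invoke the exact-dimensionality theorem of Feng and Hu \cite{FengandHu} to conclude $H(\mu,\mathcal{D}_m)=(\dim_H\mu)\,m\log 2+o(m)$. Substituting into the previous inequality and telescoping yields
\[
H(\tilde{\nu}^{(n)},\tilde{\mathcal{D}}_{qn'}\mid\tilde{\mathcal{D}}_{n'})\le (\dim_H\mu)\,n'\log 2+o(n'),
\]
which is only \emph{linear} in $n'$ and not yet the desired sublinear bound.

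The main obstacle, and the heart of Hochman's argument, is to upgrade this linear estimate to the required $o(n')$ bound. This is achieved through Hochman's inverse theorem for entropy growth under convolutions: if the conditional entropy above had positive limiting density in $n'$, then one could locate a scale at which $\tilde{\nu}^{(n)}$ is approximately ``saturated'', and convolving $\mu$ at that scale with itself through $\tilde{\nu}^{(n)}$ would strictly boost the dyadic entropy of $\mu$ at scale $2^{-qn'}$ above $(\dim_H\mu)\,qn'\log 2$, contradicting exact dimensionality. Implementing this step requires the multi-scale entropy framework developed in \cite{Hochman2014}, and I would invoke it directly rather than reproduce its technical machinery.
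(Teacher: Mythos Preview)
The paper does not prove this theorem at all: it is stated with the citation \cite{Hochman2014} and used as a black box in the proof of Theorem~\ref{MRandom}. Your outline is a faithful high-level sketch of Hochman's original argument (self-similarity decomposition, exact dimensionality via Feng--Hu, and the inverse theorem for entropy under convolution), so there is no discrepancy to report; you have simply supplied more detail than the paper itself does.
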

\begin{definition}\label{WESC}
   We say that IFS $\mathcal{I}$ satisfies the \texttt{weak exponential separation condition (WESC)}, if there exists a sequence $\{n_{k}\}$ and 
$\exists~ b>1 $ such that $n_{k}\to \infty$ and $\forall~ \mathbf{i},\mathbf{j}\in \Sigma_{n_k}$, we have the following:
$$\text{either}~r_{\mathbf{i}}\ne r_{\mathbf{j}}$$ $$\text{or}~ r_{\mathbf{i}}=r_{\mathbf{j}}~\&~f_{\mathbf{i}}(0)=f_{\mathbf{j}}(0)\iff f_{\mathbf{i}}\equiv f_{\mathbf{j}}$$
$$\text{or}~ r_{\mathbf{i}}=r_{\mathbf{j}}~\&~|f_{\mathbf{i}}(0)-f_{\mathbf{j}}(0)|>2^{-n_{k}' b}.$$ 
\end{definition}
The following result is a direct consequence of Theorem~\ref{Hochmanentropy}, although it was not stated explicitly in Hochman's seminal paper \cite{Hochman2014}. It was observed by Breuillard and Varj\'u \cite{BRVAR}, and they provided a proof for the case of homogeneous contraction ratios. We provide here the proof on the general case for completeness following Breuillard and Varj\'u~\cite{BRVAR}.
 

\begin{theorem}\label{MRandom}
 Let $\mu$ be a self-similar measure corresponding to the probabilistic self-similar IFS $(\mathcal{I},\mathbf{p})$. If IFS $\mathcal{I}$ satisfy WESC, then 
$$\dim_{H}(\mu)=\min\bigg\{1,\frac{h_{RW}(\mu)}{\chi(\mathbf{p})}\bigg\},$$
where $h_{RW}(\mu)$ is the random walk entropy for the self-similar measure $\mu$. 
\end{theorem}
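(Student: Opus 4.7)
The upper bound $\dim_H(\mu) \le \min\{1, h_{RW}(\mu)/\chi(\mathbf{p})\}$ is already noted before the statement and follows from a standard coding argument, so the task reduces to establishing the matching lower bound. The plan is to follow Hochman's entropy-gap scheme and substitute WESC for ESC at the crucial step, along the lines of Breuillard--Varj\'u's treatment of the homogeneous case.

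First, I assume $\dim_H(\mu) < 1$; if $\dim_H(\mu) = 1$ there is nothing to prove, and the two cases can be compared at the end. By Theorem~\ref{Hochmanentropy}, for every $q > 1$,
$$\frac{1}{n'} H(\tilde{\nu}^{(n)}, \tilde{\mathcal{D}}_{qn'} \mid \tilde{\mathcal{D}}_{n'}) \xrightarrow[n\to\infty]{} 0,$$
where $n' = \lfloor n \log(1/r) \rfloor$ and $r = \prod r_i^{p_i}$. I apply this along the WESC subsequence $\{n_k\}$ with $q = b$, where $b > 1$ is the constant from Definition~\ref{WESC}.

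Next, I interpret both sides geometrically. By WESC, two words $\mathbf{i}, \mathbf{j} \in \Sigma_{n_k}$ with $r_{\mathbf{i}} = r_{\mathbf{j}}$ either satisfy $f_{\mathbf{i}} \equiv f_{\mathbf{j}}$ or have $|f_{\mathbf{i}}(0) - f_{\mathbf{j}}(0)| > 2^{-b n_k'}$. Consequently the atoms of $\tilde{\mathcal{D}}_{b n_k'}$ meeting the support of $\tilde{\nu}^{(n_k)}$ correspond, on each fibre $\{r_{\mathbf{i}} = r\}$, precisely to equivalence classes of the exact-overlap relation $\mathbf{i} \sim \mathbf{j} \iff f_{\mathbf{i}} \equiv f_{\mathbf{j}}$. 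This identifies
$$H(\tilde{\nu}^{(n_k)}, \tilde{\mathcal{D}}_{b n_k'}) = n_k \, h_{RW}(\mu) + o(n_k).$$
At the coarser scale, a direct estimate using $n_k'/n_k \to \log(1/r)$, the subadditivity of Shannon entropy, and the entropy of the pushforward of $\tilde{\nu}^{(n_k)}$ to the first coordinate yields
$$H(\tilde{\nu}^{(n_k)}, \tilde{\mathcal{D}}_{n_k'}) \le \chi(\mathbf{p}) \, n_k + H(\mu, \mathcal{D}_{n_k'}) + o(n_k).$$
Combining the three displayed bounds, I obtain $n_k(h_{RW}(\mu) - \chi(\mathbf{p})) \le H(\mu, \mathcal{D}_{n_k'}) + o(n_k)$, and dividing by $n_k' \sim n_k \log(1/r)$, together with $\dim_H(\mu) = \lim_n n^{-1} H(\mu, \mathcal{D}_n)$ coming from exact dimensionality (Feng--Hu), yields $\dim_H(\mu) \ge h_{RW}(\mu)/\chi(\mathbf{p})$, which, compared with the running assumption $\dim_H(\mu) < 1$, also resolves the supercritical case by contradiction.

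The main obstacle is the step identifying $H(\tilde{\nu}^{(n_k)}, \tilde{\mathcal{D}}_{b n_k'})$ with $n_k h_{RW}(\mu)$. Under ESC this is essentially automatic because there are no exact overlaps; under WESC one must carefully arrange that the exact overlaps collapse consistently into single atoms while the inequivalent pairs are geometrically separated at scale $2^{-b n_k'}$, and moreover that the same partition $\tilde{\mathcal{D}}_{b n_k'}$ witnesses both phenomena uniformly in the fibre variable $r$. This mixing of algebraic (equivalence-class) and metric (separation) information is the inhomogeneous-contraction analogue of the Breuillard--Varj\'u argument, and is where whatever real technical effort is required resides.
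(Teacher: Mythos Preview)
Your overall architecture matches the paper's: assume $\dim_H(\mu)<1$, invoke Theorem~\ref{Hochmanentropy} with $q=b$ along the WESC subsequence, identify the fine-scale entropy with $H_{n_k}$, and compare with the coarse-scale entropy. The fine-scale identification you flag as the ``main obstacle'' is in fact exact and immediate: on each fibre $\{r_{\mathbf i}=a\}$, WESC forces inequivalent maps to satisfy $|f_{\mathbf i}(0)-f_{\mathbf j}(0)|>2^{-bn_k'}$, hence to lie in distinct atoms of $\mathcal D_{bn_k'}$, while equivalent maps coincide and share an atom; thus the nonempty atoms of $\tilde{\mathcal D}_{bn_k'}$ are precisely the $\equiv$-classes and $H(\tilde\nu^{(n_k)},\tilde{\mathcal D}_{bn_k'})=H_{n_k}$ on the nose. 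This is exactly how the paper handles it.

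The genuine gap is your coarse-scale estimate. From $H(\tilde\nu^{(n_k)},\tilde{\mathcal D}_{n_k'})\le \chi(\mathbf p)\,n_k+H(\mu,\mathcal D_{n_k'})+o(n_k)$ together with the other two inputs you derive $n_k\bigl(h_{RW}(\mu)-\chi(\mathbf p)\bigr)\le H(\mu,\mathcal D_{n_k'})+o(n_k)$; but dividing by $n_k'\sim n_k\log(1/r)$ and using exact dimensionality yields only $\dim_H(\mu)\ge h_{RW}(\mu)/\chi(\mathbf p)-1$, not the claimed $\dim_H(\mu)\ge h_{RW}(\mu)/\chi(\mathbf p)$. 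The term $\chi(\mathbf p)\,n_k$ is the culprit, and it is in any case not what subadditivity produces: the second-coordinate marginal of $\tilde\nu^{(n_k)}$ is the law of $r_{\mathbf i}=\prod_j r_j^{\#_j\mathbf i}$, which depends only on the type of $\mathbf i$; there are at most $(n_k+1)^N$ types, so that marginal has entropy $O(\log n_k)=o(n_k)$. With this correction the bound reads $H(\tilde\nu^{(n_k)},\tilde{\mathcal D}_{n_k'})\le H(\mu,\mathcal D_{n_k'})+o(n_k)$ and your argument then goes through. The paper bypasses this step by quoting directly (``similarly to Hochman'') that $\tfrac{1}{n'}H(\tilde\nu^{(n)},\tilde{\mathcal D}_{n'})\to\dim_H(\mu)$, which is precisely this computation packaged as a citation.
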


\begin{proof}
Since $\dim_{H}(\mu)\leq \frac{h_{RW}(\mu)}{\chi(\mathbf{p})}$, we may assume that $\dim_{H}(\mu)<1.$ By Theorem \ref{Hochmanentropy}, we have
 $$\lim_{n\to \infty}\frac{1}{n'}H(\tilde{\nu}^{(n)},\tilde{\mathcal{D}}_{qn'}|\tilde{\mathcal{D}}_{n'})=0\quad  \forall \quad  q>1,$$
  where $n'=\lfloor n \log(\frac{1}{r})\rfloor$  and $r= \prod_{i=1}^{N}{r_{i}}^{p_i}$. Similarly to Hochman \cite{Hochman2014}, since $\mu$ is exact dimensional, we get that 
  $$\lim_{n\to \infty}\frac{1}{n'}H(\tilde{\nu}^{(n)},\tilde{\mathcal{D}}_{n'})=\dim_{H}(\mu).$$ Thus, for proving our result, it remains only to show that
  \begin{equation}\label{eq3}
    \lim_{n\to \infty}\frac{1}{n'}H(\tilde{\nu}^{(n)},\tilde{\mathcal{D}}_{qn'})\geq \frac{h_{RW}(\mu)}{\chi(\mathbf{p})}  
  \end{equation}
for some $q>1.$
 Now, we will prove the equality \eqref{eq3}. Let $q=b$ from Definition~\ref{WESC}. For each $a\in \mathbb{R}$, we have
 $$\tilde{\nu}^{(n)}\bigg(\bigg[\frac{k}{2^{bn'}},\frac{k+1}{2^{bn'}}\bigg)\times \{a\}\bigg)=\sum_{\substack{{\mathbf{j}}\in \Sigma_{n}\\ r_{\mathbf{j}}=a}}p_{\mathbf{j}}\delta_{f_{\mathbf{j}}(0)}\bigg[\frac{k}{2^{bn'}},\frac{k+1}{2^{bn'}}\bigg).$$ Thus, the Shannon entropy of the probability measure $\tilde{\nu}^{(n_k)}$ on $\mathbb{R}\times \mathbb{R} $ with respect to the partition $\tilde{\mathcal{D}}_{bn_{k}'}$ is
 \begin{align*}
H(\tilde{\nu}^{(n_k)},\tilde{\mathcal{D}}_{bn_{k}'})&=-\sum_{\substack{a\in \mathbb{R}\\ \exists~ r_{\mathbf{i}}=a}}\sum_{l\in \mathbb{Z}} \tilde{\nu}^{(n_k)}\bigg(\bigg[\frac{l}{2^{bn_{k}'}},\frac{l+1}{2^{bn_{k}'}}\bigg)\times \{a\}\bigg) \log\bigg(\tilde{\nu}^{(n_k)}\bigg(\bigg[\frac{l}{2^{bn_{k}'}},\frac{l+1}{2^{bn_{k}'}}\bigg)\times \{a\}\bigg)\bigg)\\
&=-\sum_{\substack{a\in \mathbb{R}\\ \exists~ r_{\mathbf{i}}=a}}\sum_{l\in \mathbb{Z}}\bigg(\sum_{\substack{{\mathbf{j}}\in \Sigma_{n_{k}}\\ r_{\mathbf{j}}=a}}p_{\mathbf{j}}\delta_{f_{\mathbf{j}}(0)}\bigg[\frac{l}{2^{bn_{k}'}},\frac{l+1}{2^{bn_{k}'}}\bigg)\bigg) \log\bigg(\sum_{\substack{{\mathbf{j}}\in \Sigma_{n_{k}}\\ r_{\mathbf{j}}= a}}p_{\mathbf{j}}\delta_{f_{\mathbf{j}}(0)}\bigg[\frac{l}{2^{bn_{k}'}},\frac{l+1}{2^{bn_{k}'}}\bigg)\bigg).
\end{align*}
Since the IFS $\mathcal{I}$ satisfies the WESC, we have 
\begin{align*}
H(\tilde{\nu}^{(n_k)},\tilde{\mathcal{D}}_{bn_{k}'})&=-\sum_{\substack{a\in \mathbb{R}\\ \exists~ r_{\mathbf{i}}=a}}\bigg(\sum_{\substack{{\mathbf{j}}\in \Sigma_{n_k}\\ f_{\mathbf{i}}\equiv f_{\mathbf{j}}}}p_{\mathbf{j}}\bigg)\log\bigg(\sum_{\substack{{\mathbf{j}}\in \Sigma_{n_{k}}\\ f_{\mathbf{i}}\equiv f_{\mathbf{j}}}}p_{\mathbf{j}}\bigg)\\&= -\sum_{{\mathbf{i}}\in I^{n_{k}}}p_{\mathbf{i}} \log\bigg(\sum_{\substack{{\mathbf{j}}\in \Sigma_{n_k}\\ f_{\mathbf{i}}\equiv f_{\mathbf{j}}}}p_{\mathbf{j}}\bigg).  
\end{align*}
This implies that
 \begin{equation*}
    \lim_{k\to \infty}\frac{1}{n_{k}'}H(\tilde{\nu}^{(n_k)},\tilde{\mathcal{D}}_{bn_{k}'})\geq \frac{h_{RW}(\mu)}{\chi(\mathbf{p})},  
  \end{equation*}
  which completes the proof.
\end{proof}

\section{Exponential separation condition for common fixed point system}\label{ProofWESCforCFS}

{First, we provide a brief explanation and outline of this section. Here, our aim is to show that the self-similar IFS $\mathcal{S}_{\underline{\lambda},\underline{t}}$ satisfies a particular version of WESC for typical parameters, namely, the Exponential Separation Condition for the Common Fixed Point System (ESC for CFS), which we introduce in subsection \ref{sec:block}. Our strategy is to first establish it for two-fixed-point systems and then extend it to general fixed-point systems. In the first subsection \ref{Sec4.1Tech}, we provide two lemmas for general self-similar IFS $\mathcal{F}$ \eqref{eq:FIFS} on $\mathbb{R}$. The IFS $\mathcal{F}$ comprises two classes of mappings, those whose fixed point is $0$ and those whose fixed points are nonzero. A crucial property is that the images of the maps with nonzero fixed points remain bounded away from $0$.
The first block of an infinite sequence $\mathbf{i}$ is the longest initial segment of $\mathbf{i}$ for which the maps corresponding to all symbols in the segment have the same fixed point. The first lemma, Lemma \ref{GenExponentialA1}, proves that the projection of any pair of the infinite sequences $\mathbf{i}$ and $\mathbf{j}$ are exponentially separated if the first blocks are pairwise disjoint, all the maps in the first blocks having same fixed point $0$ and the similarity ratios corresponding to the first blocks are not close enough.
Furthermore, if the similarity ratios corresponding to the first blocks are sufficiently close and the lengths of the first blocks are sufficiently large, then Lemma \ref{GenFirstorderTrans} shows that the first-order transversality condition holds. 

In the subsection \ref{sec:block}, we introduce the block structure for self-similar IFS with a common fixed point structure, and introduce the Exponential Separation Condition for the Common Fixed Point System (ESC for CFS). In the subsection \ref{2fixedpointsys}, we consider a self-similar IFS $\mathcal{J}$ (see \ref{eq:IFS2fix}) having only two fixed points $0$ and $1$. We define a set $\mathcal{B}$, consisting of all pairs of infinite sequences from the symbolic space whose first blocks are disjoint (see \eqref{firstsetB}). 
Furthermore, we partition the set $\mathcal{B}$ into two disjoint subsets: $\mathcal{B}_1$
 , consisting of pairs whose first blocks have different fixed points, and $\mathcal{B}_2$
 , consisting of pairs whose first blocks have the same fixed point (see \eqref{eqBdiv1}). 
Furthermore, we partition the set $\mathcal{B}_2$
  into two disjoint subsets: $\mathcal{B}_3$
 , consisting of pairs whose first blocks have small length, and $\mathcal{B}_4$
 , consisting of pairs whose first blocks have large length (see \eqref{eqBdiv2}). Since IFS $\mathcal{J}$ is a special form of $\mathcal{F}$ defined in the subsection \ref{Sec4.1Tech} and the sets $\tilde{\mathcal{A}}_{1}$ and $\tilde{\mathcal{A}}_{2}$ provide a disjoint partition of the set  $\mathcal{B}_4$. Thus, by applying Lemma \ref{GenExponentialA1} and Lemma \ref{GenFirstorderTrans}, we conclude that for every pair in $\mathcal{B}_4$, either the projections are exponentially separated or the first-order transversality condition holds (see Lemma \ref{ExponentialA1b}). The  $\mathcal{B}_3$ is a compact subset of $\Sigma\times\Sigma$, and for any pair in $\mathcal{B}_3$ the projection is non-degenerate. Thus, by applying the idea of Hochman \cite{Hochman2022}, we show that the higher-order transversality condition holds for the set $\mathcal{B}_3$ (see Lemma \ref{Trans3}). We apply a similar idea to the set $\mathcal{B}_2.$ By using these transversality conditions, we show that IFS $\mathcal{J}$ satisfies ESC for CFS outside of a Hausdorff co-dimension one exceptional set of natural parameters (see Proposition \ref{ESC for gerenalsystem}). In the subsection  \ref{Genaralcaseextend}, we will prove our aim for a general fixed point system by using results on two fixed point systems. The key idea is that, upon differentiating the general IFS $\mathcal{S}_{\underline{\lambda},\underline{t}}$
  with respect to the fixed point $t_i$, we obtain a self-similar IFS $\mathcal{S}_i$, which have only two fixed points $0$ and $1$. The natural projection corresponding to the IFS $\mathcal{S}_{\underline{\lambda},\underline{t}}$ and the natural projection corresponding to the IFS $\mathcal{S}_i$ are well-connected by a formula (see \ref{connectionformula}). Finally, by combining this formula with the result for the two-fixed-point system, we obtain the desired conclusion for the general case (see Proposition \ref{ESC for gerenalsystem}). 
}

\subsection{Technical lemmas}\label{Sec4.1Tech}

 We begin by discussing some general results for later purposes, which will be particularly useful in describing the dimension theory of self-similar IFS with a common fixed-point structure. 
 
 Let us consider a self-similar IFS $\mathcal{F}$ on $\mathbb{R}$, 
 \begin{equation}\label{eq:FIFS}
     \mathcal{F}:=\{f_{i}(x)=\lambda_{i}x\}_{i=1}^{M_1}\cup\{f_{i}(x)=\lambda_{i}x+c_i\}_{i=M_1+1}^{M_2},
 \end{equation}
 where $M_2>M_1\geq 2$. Suppose that there exist $\epsilon,A>0$ such that 
\begin{enumerate}
    \item\label{it:1} $\lambda_{i}\in [\epsilon,1-\epsilon]~\forall~i\in \{1,2,\dots,M_1\}$ and $|\lambda_{i}|\in [\epsilon,1-\epsilon]~\forall~i\in \{M_{1}+1,M_{1}+2,\dots,M_2\}$,
    \item\label{it:2} $f_{i}[0,A]\subseteq [\epsilon,A] ~\forall~i\in \{M_{1}+1,\dots,M_2\}$.
 \end{enumerate}
Let $\Sigma=\{1,\ldots,M_2\}^\N$ be the symbolic space and let $\Pi$ be the natural projection corresponding to the IFS $\mathcal{F}$. Throughout the section, we assume that $A>\epsilon>0$ are arbitrary but fixed.

For an $\mathbf{i}\in\Sigma$, we define the ``first block" $b_1^{\mathbf{i}}$ of $\mathbf{i}$ as follows: if $i_1\geq M_1+1$ then $b_1^{\mathbf{i}}=i_1$. Otherwise, let $|b_1^{\mathbf{i}}|:=\min\{k\geq1:i_k\geq M_1+1\}-1$, and if $|b_1^{\mathbf{i}}|=\infty$ then let $b_1^{\mathbf{i}}=\mathbf{i}$, otherwise let $b_1^{\mathbf{i}}:=\mathbf{i}_{|_{|b_1^{\mathbf{i}}|}}$.

We say that for $\mathbf{i},\mathbf{j}\in\Sigma$, the first blocks are disjoint if the sets formed by the symbols in the first blocks $b_1^{\mathbf{i}}$ and $b_1^{\mathbf{j}}$ are disjoint. We denote it by $b_{1}^{\mathbf{i}}\cap b_{1}^{\mathbf{j}}=\emptyset$. In other words, $\min\{\#_ib_{1}^{\mathbf{i}},\#_ib_{1}^{\mathbf{j}}\}=0$ for every $1\leq i\leq M_2$.

In the next two lemmas, we will follow and modify techniques of Kamalutdinov and Tetenov \cite{MR3843511}.

We define a set as follows $$\tilde{\mathcal{A}}_{1}=\bigg\{(\mathbf{i},\mathbf{j})\in \Sigma\times \Sigma: 1\leq i_1,j_1\leq M_1~\&~b_{1}^{\mathbf{i}}\cap b_{1}^{\mathbf{j}}=\emptyset~\&~ \frac{\lambda_{b_{1}^{\mathbf{i}}}}{\lambda_{b_{1}^{\mathbf{j}}}}\notin \bigg(\frac{\epsilon}{2A},\frac{2A}{\epsilon }\bigg)~\&~b_{1}^{\mathbf{i}}\ne \mathbf{i},b_{1}^{\mathbf{j}}\ne \mathbf{j}\bigg\}.$$ 
\begin{lemma}\label{GenExponentialA1}
    Let $\mathcal{F}$ be the self-similar IFS of the form \eqref{eq:FIFS} with natural projection $\Pi$. Let $A>\epsilon>0$. Then for every $\underline{\lambda}=(\lambda_i)_{i=1}^{M_2}$ satisfying \eqref{it:1}--\eqref{it:2} and for every pair $(\mathbf{i},\mathbf{j})\in \tilde{\mathcal{A}}_{1}$,
    $$|\Pi(\mathbf{i})-\Pi(\mathbf{j})|\geq {\epsilon}^{3\max\{|b_{1}^{\mathbf{i}}|,|b_{1}^{\mathbf{j}}|\}}.$$
\end{lemma}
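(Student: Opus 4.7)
My plan is to reduce the problem to a one-line distance estimate on the real line by exploiting the structure of the first block. Since $i_1,j_1\leq M_1$, the first blocks $b_1^{\mathbf{i}}$ and $b_1^{\mathbf{j}}$ consist exclusively of indices in $\{1,\ldots,M_1\}$, and for those indices the map $f_k(x)=\lambda_k x$ is linear with fixed point $0$. Writing $\mathbf{i}=b_1^{\mathbf{i}}*\mathbf{i}'$ and $\mathbf{j}=b_1^{\mathbf{j}}*\mathbf{j}'$ (which is legitimate precisely because we assumed $b_1^{\mathbf{i}}\neq\mathbf{i}$ and $b_1^{\mathbf{j}}\neq\mathbf{j}$), the natural projection therefore factors as $\Pi(\mathbf{i})=\lambda_{b_1^{\mathbf{i}}}\Pi(\mathbf{i}')$ and $\Pi(\mathbf{j})=\lambda_{b_1^{\mathbf{j}}}\Pi(\mathbf{j}')$, so the problem reduces to bounding $|\lambda_{b_1^{\mathbf{i}}}\Pi(\mathbf{i}')-\lambda_{b_1^{\mathbf{j}}}\Pi(\mathbf{j}')|$ from below.

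Next I would locate $\Pi(\mathbf{i}')$ and $\Pi(\mathbf{j}')$ inside $[\epsilon,A]$. Assumption (ii) ensures that every map with index greater than $M_1$ sends $[0,A]$ into $[\epsilon,A]$, while the fixed-point-sharing maps keep $[0,A]$ inside itself; so $[0,A]$ is forward-invariant under the whole IFS and the attractor lies in $[0,A]$. By the very definition of the first block, the initial symbol of $\mathbf{i}'$ is some $i^{*}>M_1$, and one may write $\Pi(\mathbf{i}')=f_{i^{*}}(z)$ for some $z\in[0,A]$; assumption (ii) then yields $\Pi(\mathbf{i}')\in[\epsilon,A]$, and symmetrically $\Pi(\mathbf{j}')\in[\epsilon,A]$. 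In particular $A\geq\epsilon$.

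Now I would invoke the ratio hypothesis. By symmetry (swapping $\mathbf{i}$ and $\mathbf{j}$ if necessary), assume $\lambda_{b_1^{\mathbf{i}}}/\lambda_{b_1^{\mathbf{j}}}\geq 2A/\epsilon$. Then
$$\lambda_{b_1^{\mathbf{i}}}\Pi(\mathbf{i}')\geq \lambda_{b_1^{\mathbf{i}}}\epsilon\geq 2A\,\lambda_{b_1^{\mathbf{j}}}\geq 2\lambda_{b_1^{\mathbf{j}}}\Pi(\mathbf{j}'),$$
which gives $|\Pi(\mathbf{i})-\Pi(\mathbf{j})|\geq \lambda_{b_1^{\mathbf{j}}}\Pi(\mathbf{j}')\geq \epsilon\,\lambda_{b_1^{\mathbf{j}}}$. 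Finally, assumption (i) forces $\lambda_{b_1^{\mathbf{j}}}\geq \epsilon^{|b_1^{\mathbf{j}}|}$, so
$$|\Pi(\mathbf{i})-\Pi(\mathbf{j})|\geq \epsilon^{|b_1^{\mathbf{j}}|+1}\geq \epsilon^{2\max\{|b_1^{\mathbf{i}}|,|b_1^{\mathbf{j}}|\}},$$
where the last inequality uses $\epsilon<1$ together with $|b_1^{\mathbf{j}}|+1\leq \max\{|b_1^{\mathbf{i}}|,|b_1^{\mathbf{j}}|\}+1\leq 2\max\{|b_1^{\mathbf{i}}|,|b_1^{\mathbf{j}}|\}$ (both first blocks are nonempty because $i_1,j_1\leq M_1$).

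There is no real obstacle here; the argument is essentially a direct computation once the linearity-through-the-origin of the first block is noticed. The one point worth flagging is that the disjointness condition $b_1^{\mathbf{i}}\cap b_1^{\mathbf{j}}=\emptyset$ built into the definition of $\mathcal{A}_1$ is not actually used at this stage — it is kept in reserve for subsequent lemmas where the first-block information must be propagated without ambiguity.
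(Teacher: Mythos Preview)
Your proof is correct and follows essentially the same approach as the paper: factor out the first block using $\Pi(\mathbf{i})=\lambda_{b_1^{\mathbf{i}}}\Pi(\mathbf{i}')$, locate the tails $\Pi(\mathbf{i}'),\Pi(\mathbf{j}')$ in $[\epsilon,A]$ via assumption~(ii), and then use the ratio hypothesis to separate the two products. The only cosmetic difference is that the paper extracts the lower bound in terms of $\lambda_{b_1^{\mathbf{i}}}$ (the larger contraction), writing $\Pi(\mathbf{i})-\Pi(\mathbf{j})\geq \lambda_{b_1^{\mathbf{i}}}\epsilon-\lambda_{b_1^{\mathbf{j}}}A\geq\tfrac{\epsilon}{2}\lambda_{b_1^{\mathbf{i}}}$, whereas you bound via $\lambda_{b_1^{\mathbf{j}}}$; your route is arguably cleaner, and your remark that the disjointness condition $b_1^{\mathbf{i}}\cap b_1^{\mathbf{j}}=\emptyset$ is unused here is accurate and matches the paper's own argument.
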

\begin{proof} Without loss of generality, we assume that $\frac{\lambda_{b_{1}^{\mathbf{i}}}}{\lambda_{b_{1}^{\mathbf{j}}}}\geq\frac{2 A}{\epsilon},$ the other case can be proven similarly. Since $b_{1}^{\mathbf{i}}\ne \mathbf{i},b_{1}^{\mathbf{j}}\ne \mathbf{j}$ and $f_{i}[0,A]\subseteq [\epsilon,A] ~\forall~i\in \{M_{1}+1,\dots,M_2\}$, we have 
  $$\Pi(\mathbf{i})\in f_{b_{1}^{\mathbf{i}}}([\epsilon,A])~~\text{and}~~\Pi(\mathbf{j})\in f_{b_{1}^{\mathbf{j}}}([\epsilon,A]).$$
  Thus, we get
  \begin{align*}
    \Pi(\mathbf{i})-\Pi(\mathbf{j})=\lambda_{b_{1}^{\mathbf{i}}}x- \lambda_{b_{1}^{\mathbf{j}}}y
  \end{align*}
  for some $x,y\in [\epsilon,A]$. This implies that 
  \begin{align*}
   \Pi(\mathbf{i})-\Pi(\mathbf{j})&\geq \lambda_{b_{1}^{\mathbf{i}}} \epsilon- \lambda_{b_{1}^{\mathbf{j}}}A=\lambda_{b_{1}^{\mathbf{i}}} \epsilon
   \bigg(1-\frac{\lambda_{b_{1}^{\mathbf{j}}} A}{\lambda_{b_{1}^{\mathbf{i}}} \epsilon}\bigg)\geq \frac{\epsilon}{2} \lambda_{b_{1}^{\mathbf{i}}}\geq {\epsilon}^{3|b_{1}^{\mathbf{i}}|}.
  \end{align*}
\end{proof}

Let $N_{0}=\lceil \frac{(1-\epsilon)(2A+\epsilon)}{\epsilon^3} \rceil+1.$ We define a set as follows 
\begin{align*}
  \tilde{\mathcal{A}}_{2}=\bigg\{(\mathbf{i},\mathbf{j})\in \Sigma\times \Sigma: 1\leq i_1,j_1\leq M_1~\&~b_{1}^{\mathbf{i}}\cap b_{1}^{\mathbf{j}}=\emptyset~\&~ \frac{\lambda_{b_{1}^{\mathbf{i}}}}{\lambda_{b_{1}^{\mathbf{j}}}}\in \bigg(\frac{\epsilon}{2A},\frac{2A}{\epsilon}\bigg)~\&~b_{1}^{\mathbf{i}}\ne \mathbf{i},b_{1}^{\mathbf{j}}\ne \mathbf{j}\\~\&~\max_{1\leq k\leq M_1}\bigg\{\max\{\#_kb_{1}^{\mathbf{i}},\#_kb_{1}^{\mathbf{j}}\}\bigg\}>N_{0}\bigg\}  
\end{align*}

\begin{lemma}\label{GenFirstorderTrans}
Let $\mathcal{F}$ be the self-similar IFS of the form \eqref{eq:FIFS} with natural projection $\Pi$. Let $A>\epsilon>0$. Then there exists a constant $C>0$ such that for every $\underline{\lambda}=(\lambda_i)_{i=1}^{M_2}$ satisfying \eqref{it:1}--\eqref{it:2} and for every $(\mathbf{i},\mathbf{j})\in \tilde{\mathcal{A}}_{2}$ 
   $$\bigg{|}\frac{\partial(\Pi(\mathbf{i})-\Pi(\mathbf{j}))}{\partial \lambda_{k}}\bigg{|}\geq C{\epsilon}^{\max\{|b_{1}^{\mathbf{i}}|,|b_{1}^{\mathbf{j}}|\}},$$ where $k$ is such that $\max\{\#_kb_{1}^{\mathbf{i}},\#_kb_{1}^{\mathbf{j}}\}>N_0$.
\end{lemma}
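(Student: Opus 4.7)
The plan is to exploit the block-factorization
\[
\Pi(\mathbf{i}) = \lambda_{b_1^{\mathbf{i}}}\, \Pi(\sigma^{m_1}\mathbf{i}),\qquad m_1:=|b_1^{\mathbf{i}}|,
\]
and the analogue for $\mathbf{j}$. Since $\sigma^{m_1}\mathbf{i}$ begins with an index in $\{M_1+1,\dots,M_2\}$, assumption \eqref{it:2} yields $x:=\Pi(\sigma^{m_1}\mathbf{i})\in[\epsilon,A]$, and similarly $y:=\Pi(\sigma^{m_2}\mathbf{j})\in[\epsilon,A]$ with $m_2:=|b_1^{\mathbf{j}}|$. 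Writing $\Pi$ as the non-negative series $\Pi(\mathbf{i}) = \sum_{n\geq 1,\, i_n>M_1} r_{\mathbf{i}_{|_{n-1}}}c_{i_n}$ (noting $c_{i_n}\geq \epsilon$ for $i_n>M_1$ by \eqref{it:2}), differentiating term-by-term, and splitting $\#_k \mathbf{i}_{|_{n-1}}$ into its contribution from $b_1^{\mathbf{i}}$ and from the tail, I obtain the exact identity
\[
\frac{\partial \Pi(\mathbf{i})}{\partial \lambda_k} \;=\; \frac{\#_k b_1^{\mathbf{i}}}{\lambda_k}\,\Pi(\mathbf{i}) \;+\; \lambda_{b_1^{\mathbf{i}}}\,\frac{\partial \Pi(\sigma^{m_1}\mathbf{i})}{\partial \lambda_k}.
\]

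Without loss of generality assume $\#_k b_1^{\mathbf{i}} > N_0$. Since $b_1^{\mathbf{i}}\cap b_1^{\mathbf{j}}=\emptyset$ (as sets of symbols), this forces $\#_k b_1^{\mathbf{j}}=0$, so only the tail contributes to $\mathbf{j}$:
\[
\frac{\partial \Pi(\mathbf{j})}{\partial \lambda_k} \;=\; \lambda_{b_1^{\mathbf{j}}}\,\frac{\partial \Pi(\sigma^{m_2}\mathbf{j})}{\partial \lambda_k}.
\]
The key step is a uniform bound on the tail derivatives: for any $\mathbf{i}'$ whose first symbol exceeds $M_1$, one has $i'_1\ne k$, hence $\#_k \mathbf{i}'_{|_{n-1}}\leq n-2$; combined with $r_{\mathbf{i}'_{|_{n-1}}}\leq (1-\epsilon)^{n-1}$ and $c_{i'_n}\leq A$, this gives
\[
0 \;\leq\; \frac{\partial \Pi(\mathbf{i}')}{\partial \lambda_k} \;\leq\; \frac{A}{\lambda_k}\sum_{n\geq 2}(n-2)(1-\epsilon)^{n-1} \;\leq\; \frac{A(1-\epsilon)^2}{\epsilon^3} \;=:\; K,
\]
an absolute constant independent of $m_1,m_2$.

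Assembling, using $\lambda_k\leq 1-\epsilon$, $\Pi(\mathbf{i})\geq \epsilon\,\lambda_{b_1^{\mathbf{i}}}$, and the ratio bound $\lambda_{b_1^{\mathbf{j}}}\leq \tfrac{2A}{\epsilon}\lambda_{b_1^{\mathbf{i}}}$ built into the definition of $\mathcal{A}_2$, I get
\[
\frac{\partial (\Pi(\mathbf{i})-\Pi(\mathbf{j}))}{\partial \lambda_k} \;\geq\; \frac{N_0\,\epsilon}{\lambda_k}\lambda_{b_1^{\mathbf{i}}} \;-\; \frac{2A}{\epsilon}\,\lambda_{b_1^{\mathbf{i}}}\, K \;\geq\; C_1\,\lambda_{b_1^{\mathbf{i}}}
\]
for some $C_1>0$, provided $N_0$ is large enough in terms of $\epsilon$ and $A$; this is precisely the purpose of the value of $N_0$ fixed in the statement. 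Finally, $\lambda_{b_1^{\mathbf{i}}}\geq \epsilon^{m_1}$ together with $\lambda_{b_1^{\mathbf{i}}}\geq \tfrac{\epsilon}{2A}\lambda_{b_1^{\mathbf{j}}}\geq \tfrac{\epsilon}{2A}\epsilon^{m_2}$ gives $\lambda_{b_1^{\mathbf{i}}} \geq \tfrac{\epsilon}{2A}\epsilon^{\max\{m_1,m_2\}}$, and absorbing this factor into $C_1$ produces the required $C\epsilon^{\max\{|b_1^{\mathbf{i}}|,|b_1^{\mathbf{j}}|\}}$ estimate. The main obstacle I anticipate is establishing the tail bound with $K$ independent of the (unbounded) block lengths: the bookkeeping on $\#_k$ must be arranged so that the residual derivative collapses to the convergent series $\sum_{l\geq 1} l(1-\epsilon)^l<\infty$, and only once this is in place do the three defining constraints of $\mathcal{A}_2$ --- disjointness of first blocks, comparability of $\lambda_{b_1^{\mathbf{i}}}$ and $\lambda_{b_1^{\mathbf{j}}}$, and the threshold $\#_k b_1^{\mathbf{i}}>N_0$ --- combine to yield the claimed first-order transversality.
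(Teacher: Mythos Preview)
Your approach is correct and takes a genuinely different route from the paper's. The paper argues via finite differences: it perturbs $\lambda_r$ to a nearby $\lambda_r'$, invokes the displacement lemma of Kamalutdinov and Tetenov to control $|\Pi-\Pi'|$ globally, and then compares three explicit pieces $L_1,L_2,L_3$ before passing to the derivative via the mean value theorem. You instead differentiate $\Pi$ directly, using the factorisation $\Pi(\mathbf{i})=\lambda_{b_1^{\mathbf{i}}}\Pi(\sigma^{m_1}\mathbf{i})$ and the product rule to split $\partial_{\lambda_k}\Pi(\mathbf{i})$ into the dominant term $\tfrac{\#_k b_1^{\mathbf{i}}}{\lambda_k}\Pi(\mathbf{i})$ and a tail derivative that you bound uniformly by a convergent geometric-type series. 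This is more self-contained---no external displacement lemma is needed---and it makes the role of the threshold $N_0$ transparent: it must simply beat the uniform tail constant $K$. What the paper's route buys is a slightly sharper tail estimate (via the contraction-mapping bound implicit in the displacement lemma) and hence a smaller admissible $N_0$.

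Two minor points to tighten. First, your claim $0\le \partial_{\lambda_k}\Pi(\mathbf{i}')$ relies on $r_{\mathbf{i}'_{|_{n-1}}}\ge 0$, but condition~\eqref{it:1} allows $\lambda_i<0$ for $i>M_1$; simply bound $\bigl|\partial_{\lambda_k}\Pi(\mathbf{i}')\bigr|\le K$ and subtract both tail contributions, which only changes the constant in front of $K$. Second, your series bound $K=\tfrac{A(1-\epsilon)^2}{\epsilon^3}$ is looser than what the displacement lemma yields, so the specific value $N_0=\lceil\tfrac{(1-\epsilon)(2A+\epsilon)}{\epsilon^3}\rceil+1$ fixed before $\mathcal{A}_2$ may not quite suffice for your constants as written; however, since only \emph{some} $N_0=N_0(\epsilon,A)$ is ever used downstream (in the definition of $\mathcal{A}_2$ and in Lemma~\ref{ExponentialA1b}), this is immaterial to the overall argument.
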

\begin{proof} Let $(\mathbf{i},\mathbf{j})\in \tilde{\mathcal{A}}_{2}$. For simplicity, let $m_k=\#_kb_{1}^{\mathbf{i}}$ and $n_k=\#_kb_{1}^{\mathbf{j}}$ for $k\in \{1,2,\dots,M_1\}$. Without loss of generality, we assume that $m_r>N_0$ for some $r\in \{1,2\dots, M_1\}$. By definition, $n_r=0$. Let $\lambda_{r}'\in [\epsilon, 1-\epsilon]$ be such that $\lambda_r<\lambda_r'$ and
$$\frac{\epsilon}{2A} \leq \frac{(\lambda_r')^{m_r}\prod_{i=1,\ i\neq r}^{M_1}\lambda_{i}^{m_i}}{\prod_{i=1}^{M_1}\lambda_{i}^{n_i}}\leq \frac{2A}{\epsilon}.$$ 
Let $\delta=\lambda_{r}'-\lambda_{r}>0$. Then, by using the mean value theorem, we have
$$m_r \lambda_{r}^{m_{r}-1}\leq\frac{(\lambda_{r}')^{m_r}-\lambda_{r}^{m_r}}{\lambda_{r}'-\lambda_{r}}=\frac{|(\lambda_{r}')^{m_r}-\lambda_{r}^{m_r}|}{\delta}\leq m_r (\lambda_{r}')^{m_{r}-1}.$$
Now, we define another self-similar IFS $$\mathcal{F}'=\{\mathcal{F}\setminus\{f_{r}(x)=\lambda_{r} x\}\}\cup\{f'_{r}(x)=\lambda_{r}' x\}.$$
The IFS $\mathcal{F}'$ is just obtained from the IFS $\mathcal{F}$ by replacing the map $f_{r}$ by $f_{r}'$. Let $\Pi'$ be the natural projection corresponding to the IFS $\mathcal{F}'$. Thus, by the displacement lemma by Kamalutdinov and Tetenov \cite[Theorem~17]{MR3843511}, we have   
\begin{equation}\label{eq:disp}
|\Pi(\mathbf{i})-\Pi'(\mathbf{i})|\leq \frac{\delta}{\epsilon}\quad \forall\quad  \mathbf{i}\in \Sigma.
\end{equation}
For the given pair $(\mathbf{i},\mathbf{j})\in \Sigma\times \Sigma$, we have
$$\Pi(\mathbf{i})=f_{1}^{m_1}\circ \dots \circ f_{M_1}^{m_{M_1}}(f_{\tilde{i}}(\Pi(\hat{\mathbf{i}}))),~~ \Pi(\mathbf{j})=f_{1}^{n_1}\circ \dots\circ f_{M_1}^{n_{M_1}}(f_{\tilde{j}}(\Pi(\hat{\mathbf{j}})))$$
$$\Pi'(\mathbf{i})=f_{1}^{m_1}\circ \dots \circ f_{r-1}^{m_{r-1}}\circ {f}_{{r}}'^{m_{r}}\circ f_{{r+1}}^{m_{r+1}}\circ \dots \circ f_{M_1}^{m_{M_1}}(f_{\tilde{i}}(\Pi'(\hat{\mathbf{i}}))), $$
$$\Pi'(\mathbf{j})=f_{1}^{n_1}\circ\dots\circ f_{M_1}^{n_l}(f_{\tilde{j}}(\Pi'(\hat{\mathbf{j}}))),$$
for some $\tilde{i},\tilde{j}\in \{M_{1}+1,\dots,M_2\}$ and $\hat{\mathbf{i}},\hat{\mathbf{j}}\in \Sigma.$ Now, we will estimate the expression $|(\Pi(\mathbf{i})-\Pi(\mathbf{j}))-(\Pi'(\mathbf{i})-\Pi'(\mathbf{j}))|$. We have 
\begin{align*}
  &|(\Pi(\mathbf{i})-\Pi(\mathbf{j}))-(\Pi'(\mathbf{i})-\Pi'(\mathbf{j}))|\\
  &\quad=|\Pi(\mathbf{i})-\Pi'(\mathbf{i})+\Pi'(\mathbf{j})-\Pi(\mathbf{j})|\\
  &\quad\geq \left|f_{1}^{m_1}\circ \dots \circ f_{M_1}^{m_{M_1}}(f_{\tilde{i}}(\Pi(\hat{\mathbf{i}})))-f_{1}^{m_1}\circ \dots \circ f_{r-1}^{m_{r-1}}\circ {f}_{{r}}'^{m_{r}}\circ f_{{r+1}}^{m_{r+1}}\circ \dots \circ f_{M_1}^{m_{M_1}}(f_{\tilde{i}}(\Pi'(\hat{\mathbf{i}})))\right|\\
  &\quad\quad -|\Pi'(\mathbf{j})-\Pi(\mathbf{j})|\\
  &\quad\geq\left|f_{1}^{m_1}\circ \dots \circ f_{M_1}^{m_{M_1}}(f_{\tilde{i}}(\Pi'(\hat{\mathbf{i}})))-f_{1}^{m_1}\circ \dots \circ f_{r-1}^{m_{r-1}}\circ {f}_{{r}}'^{m_{r}}\circ f_{{r+1}}^{m_{r+1}}\circ \dots \circ f_{M_1}^{m_{M_1}}(f_{\tilde{i}}(\Pi'(\hat{\mathbf{i}})))\right|\\
  &\quad\quad-\left|f_{1}^{m_1}\circ \dots \circ f_{M_1}^{m_{M_1}}(f_{\tilde{i}}(\Pi(\hat{\mathbf{i}})))-f_{1}^{m_1}\circ \dots \circ f_{M_1}^{m_{M_1}}(f_{\tilde{i}}(\Pi'(\hat{\mathbf{i}})))\right|\\
&\quad\quad -|\Pi'(\mathbf{j})-\Pi(\mathbf{j})|\\
  &\quad=:L_1-L_2-L_3.
  \end{align*}
  To estimate the above expression, we need to individually estimate the values of $L_1$, $L_2$, and $L_3$. Thus, we have 
\begin{align*}
    L_1&=\left|f_{1}^{m_1}\circ \dots \circ f_{M_1}^{m_{M_1}}(f_{\tilde{i}}(\Pi'(\hat{\mathbf{i}})))-f_{1}^{m_1}\circ \dots \circ f_{r-1}^{m_{r-1}}\circ {f}_{{r}}'^{m_{r}}\circ f_{{r+1}}^{m_{r+1}}\circ \dots \circ f_{M_1}^{m_{M_1}}(f_{\tilde{i}}(\Pi'(\hat{\mathbf{i}})))\right|\\
    &=\left|{\lambda_{r}'}^{m_{r}}-\lambda_{r}^{m_{r}}\right|\prod_{\substack{i=1\\ i\neq r}}^{M_1}\lambda_{i}^{m_i}\left|f_{\tilde{i}}(\Pi'(\hat{\mathbf{i}}))\right|\geq m_r\lambda_r^{-1}\delta\prod_{i=1}^{M_1}\lambda_{i}^{m_i}\epsilon\geq \frac{N_0\delta \epsilon}{1-\epsilon} \lambda_{b_{1}^{\mathbf{i}}}.
\end{align*}
On the other hand,
\begin{align*}
    L_2&=\left|f_{1}^{m_1}\circ \dots \circ f_{M_1}^{m_{M_1}}(f_{\tilde{i}}(\Pi(\hat{\mathbf{i}})))-f_{1}^{m_1}\circ \dots \circ f_{M_1}^{m_{M_1}}(f_{\tilde{i}}(\Pi'(\hat{\mathbf{i}})))\right|\\
    &\leq \lambda_{b_{1}^{\mathbf{i}}}|\Pi(\hat{\mathbf{i}})-\Pi'(\hat{\mathbf{i}})|\leq \frac{\delta}{\epsilon} \lambda_{b_{1}^{\mathbf{i}}},
\end{align*}
where we used \eqref{eq:disp}. Moreover, 
\begin{align*}
    L_3&=|\Pi'(\mathbf{j})-\Pi(\mathbf{j})|=|f_{1}^{n_1}\circ\dots\circ f_{M_1}^{n_{M_1}}(f_{\tilde{j}}(\Pi(\hat{\mathbf{j}})))-f_{1}^{n_1}\circ\dots\circ f_{M_1}^{n_{M_1}}(f_{\tilde{j}}(\Pi'(\hat{\mathbf{j}})))|\\
    &=\lambda_{b_{1}^{\mathbf{j}}}|\lambda_{\tilde{j}}||\Pi(\hat{\mathbf{j}})-\Pi'(\hat{\mathbf{j}})|\leq \lambda_{b_{1}^{\mathbf{j}}}(1-\epsilon)\frac{\delta}{\epsilon},
\end{align*}
where we used the estimate \eqref{eq:disp} again. Thus, we get 
\begin{align*}
  L_1-L_2\geq \bigg(\frac{N_{0}\epsilon}{1-\epsilon}-\frac{1}{\epsilon}\bigg)\lambda_{b_{1}^{\mathbf{i}}}\delta\geq \frac{\epsilon\delta}{2A}\bigg(\frac{N_{0}\epsilon}{1-\epsilon}-\frac{1}{\epsilon}\bigg) \lambda_{b_{1}^{\mathbf{j}}}.
\end{align*}
Thus, by using the above estimates, we get 
\begin{align*}
    L_{1}-L_{2}-L_{3}\geq \bigg(\frac{N_{0}\epsilon^{3}-(1-\epsilon)(2A+\epsilon)}{2A(1-\epsilon)\epsilon}\bigg)\lambda_{b_{1}^{\mathbf{j}}} \delta\geq \delta\bigg(\frac{N_{0}\epsilon^{3}-(1-\epsilon)(2A+\epsilon)}{2A(1-\epsilon)\epsilon}\bigg)\epsilon^{|b_{1}^{\mathbf{j}}|}.
\end{align*}
Choosing $C=\frac{N_{0}\epsilon^{3}-(1-\epsilon)(2A+\epsilon)}{2A(1-\epsilon)\epsilon},$ and since $N_{0}=\lceil \frac{(1-\epsilon)(2A+\epsilon)}{{\epsilon}^3} \rceil+1,$ we have $C>0.$ This implies that 
$$|(\Pi(\mathbf{i})-\Pi(\mathbf{j}))-(\Pi'(\mathbf{i})-\Pi'(\mathbf{j}))|\geq C{\epsilon}^{|b_{1}^{\mathbf{j}}|}\delta.$$ This also implies that
$$\bigg{|}\frac{\partial(\Pi(\mathbf{i})-\Pi(\mathbf{j}))}{\partial{\lambda_{r}}}\bigg{|}\geq C{\epsilon}^{|b_{1}^{\mathbf{j}}|}\geq C{\epsilon}^{\max\{|b_{1}^{\mathbf{i}}|,|b_{1}^{\mathbf{j}}|\}}.$$
This completes the proof. 
\end{proof}

\subsection{Block structure}\label{sec:block} Now, we give a more detailed description of the coding of IFS $\mathcal{S}$ with common fixed point structure defined in \eqref{eq:mainIFS}. Let us recall here the definition. Let $N\geq2$ be an integer, and for every $1\leq i\leq N$, let $n_i\geq1$ be an integer. Let
	\begin{equation}\label{eq:goalIFS}
    \mathcal{S}=\{f_{i,j}=\lambda_{i,j}x+t_{i}(1-\lambda_{i,j}): 1\leq i\leq N~\&~1\leq j\leq n_i\},
    \end{equation}
    where $\lambda_{i,j}\in (0,1)$ and $t_i\in\mathbb{R}$. Without loss of generality, we assume that $0< t_1 < t_2 <\dots< t_N.$  
    
    Let $I=\{(i,j):1\leq i\leq N,\ 1\leq j\leq n_i\}$ and let $\Sigma=I^\N$ be the symbolic space corresponding to IFS $\mathcal{S}$. Denote $\Sigma_n$ the set $n$-length finite sequences formed by the symbols of $I$, and let $\Sigma^*=\bigcup_{n=0}^\infty\Sigma_n$. Denote the length of $\mathbf{i}\in\Sigma^*$ by $|\mathbf{i}|$. We will use the convention that $|\mathbf{i}|=\infty$ for $\mathbf{i}\in\Sigma$. Denote $\sigma\colon\Sigma\to\Sigma$ the usual left-shift operator. We extend $\sigma$ to $\Sigma^*$ by $\sigma(i_1,\ldots,i_n)=(i_2,\ldots,i_n)$.
    
    For each $\mathbf{i}\in \Sigma\cup\Sigma^*$, we define the following unique block representation: For $\mathbf{i}=((i_1,j_1),(i_2,j_2),\dots)$, let $|b_1^{\mathbf{i}}|:=\max\{k\geq1:i_1=i_\ell\text{ for all }1\leq\ell\leq k\}$. If $|b_1^{\mathbf{i}}|=|\mathbf{i}|$ then let $b_1^{\mathbf{i}}:=\mathbf{i}$, otherwise let $b_1^{\mathbf{i}}:=\mathbf{i}_{|_{|b_1^{\mathbf{i}}|}}$. Then we define by induction. Suppose that $b_1^{\mathbf{i}},\dots,b_n^{\mathbf{i}}$ are defined and finite. Then let 
    $$
    |b_{n+1}^{\mathbf{i}}|:=\max\left\{k\geq1:i_{|b_1^{\mathbf{i}}|+\dots+|b_n^{\mathbf{i}}|+1}=i_{|b_1^{\mathbf{i}}|+\dots+|b_n^{\mathbf{i}}|+\ell}\text{ for all }1\leq\ell\leq k\right\}.
    $$
    If $|b_{n+1}^{\mathbf{i}}|=\left|\sigma^{|b_1^{\mathbf{i}}|+\dots+|b_n^{\mathbf{i}}|}\mathbf{i}\right|$ then let $b_{n+1}^{\mathbf{i}}:=\sigma^{|b_1^{\mathbf{i}}|+\dots+|b_n^{\mathbf{i}}|}\mathbf{i}$, and so, $\mathbf{i}=b_1^{\mathbf{i}}\dots b_n^{\mathbf{i}}b_{n+1}^{\mathbf{i}}$. Otherwise, let $b_{n+1}^{\mathbf{i}}:=(\sigma^{|b_1^{\mathbf{i}}|+\dots+|b_n^{\mathbf{i}}|}\mathbf{i})_{|_{|b_{n+1}^{\mathbf{i}}|}}$. Let $B^{\mathbf{i}}$ denote the number of block in $\mathbf{i}$.     
    For $\mathbf{i}\in \Sigma$, the maps in each block $b_{l}^{\mathbf{i}}$ correspond to the same fixed points. The maps having the same fixed point are commutative. We write $t_{b_l^{\mathbf{i}}}$ for the common fixed point of the maps in the block $b_{l}^{\mathbf{i}}$, which is also the fixed point of the map $f_{b_l^{\mathbf{i}}}$. Thus, any $\mathbf{i}\in\Sigma$ can be written uniquely as countably (but possibly finitely) many finite sequences:
\begin{equation}\label{blockrep1}
\mathbf{i}=\underbrace{(i_1,j_1)\dots (i_1,j_l)}_{b_{1}^{\mathbf{i}}}\underbrace{(i_{l+1},j_{l+1})\dots (i_{l+1},j_m)}_{b_{2}^{\mathbf{i}}} (i_{m+1},j_{m+1})\dots.
\end{equation}
Note that the maps corresponding to the symbols from the same block share the same fixed point. 

Let $\Pi: \Sigma \to [t_1,t_N]$ be the natural projection corresponding to the IFS $\mathcal{S}.$ Simple algebraic manipulations show that for every $\mathbf{i}\in\Sigma$ with $B^{\mathbf{i}}=\infty$
$$
\Pi(\mathbf{i})=t_{b_{1}^{\mathbf{i}}}+\sum_{l=1}^{\infty}\lambda_{b_{1}^{\mathbf{i}}}\lambda_{b_{2}^{\mathbf{i}}}\dots \lambda_{b_{l}^{\mathbf{i}}}\left(t_{b_{l+1}^{\mathbf{i}}}-t_{b_{l}^{\mathbf{i}}}\right),
$$
where $\lambda_{b_{l}^{\mathbf{i}}}$ represent the similarity ratio of the map corresponding to the $l^{\rm th}$ block in the infinite sequence $\mathbf{i}$. For a finite $\mathbf{i}\in\Sigma^*$, we will write $\Pi(\mathbf{i})=f_{\mathbf{i}}(0)$ with a slight abuse of notation. Observe that $\Pi$ is an analytic map of $\underline{\lambda}=(\lambda_{i,j})_{(i,j)\in I}$ and a linear map of $\underline{t}=(t_i)_{i=1}^N$ for every $\mathbf{i}\in\Sigma\cup\Sigma^*$. In particular, $\Pi$ is analytic over $\underline{\lambda}\in(-1+\epsilon,1-\epsilon)^{\sum_{i=1}^Nn_i}$, which fact will be used for later purposes.

For a finite word $\mathbf{i}\in\Sigma^*$, let $\#_{i,j}\mathbf{i}$ denote the number of symbols $(i,j)$ appearing in $\mathbf{i}$. For $\mathbf{i},\mathbf{j}\in\Sigma\cup\Sigma^*$, we say that $\mathbf{i},\mathbf{j}$ have the same block structure if $B^{\mathbf{i}}=B^{\mathbf{j}}$ and for every $1\leq l\leq B^{\mathbf{i}}$, $t_{b_l^{\mathbf{i}}}=t_{b_l^{\mathbf{j}}}$ and $\#_{i,j}b_l^{\mathbf{i}}=\#_{i,j}b_l^{\mathbf{j}}$ for every $(i,j)\in I$. Note that if $\mathbf{i},\mathbf{j}\in\Sigma^*$ have the same block structure then $f_{\mathbf{i}}\equiv f_{\mathbf{j}}$.

Now, we define a special form of the WESC adapted to common fixed-point systems.

\begin{definition}\label{DefWESCCFS} We say that IFS $\mathcal{S}$ satisfies the \texttt{Exponential Separation Condition for the Common Fixed Point System (ESC for CFS)}, if there exist $N\in \mathbb{N}$ and $b>1$ such that for every $n\geq N$ and every $\mathbf{i}, \mathbf{j}\in \Sigma_n$ with $\lambda_{\mathbf{i}}=\lambda_{\mathbf{j}}$, we have the following:
\begin{equation}\label{eq:ESCCFS}
\text{either}~\mathbf{i} , \mathbf{j}~\text{have the same block structure or}~ |\Pi({\mathbf{i}})-\Pi({\mathbf{j}})|>2^{-b n}.
\end{equation}
\end{definition}
Let us note that one can relax the definition of the exponential separation for common fixed point systems (ESC for CFS) through a subsequence of natural numbers $n$ for which \eqref{eq:ESCCFS} holds. However, it would require a slight change in the proofs and the definition of the exceptional sets.

One can see that ESC for CFS for the IFS $\mathcal{S}$ implies that  IFS $\mathcal{S}$ satisfies the WESC (see Definition \ref{WESC}). ESC for CFS provides a complete description of the overlapping cylinders via symbolic space, see Section~\ref{sec:rw}. 

We say that the finite words $\mathbf{i},\mathbf{j}\in\Sigma^*$ are disjoint if $\min\{\#_{i,j}\mathbf{i},\#_{i,j}\mathbf{j}\}=0$ for every $(i,j)\in I$. We will denote this with a slight abuse of notation by $\mathbf{j}\cap\mathbf{i}=\emptyset$.



\subsection{Two-fixed point system}\label{2fixedpointsys} Next, we discuss the ESC for CFS in the case of the two fixed-point systems. We consider the following iterated function system, having only two fixed points
\begin{equation}\label{eq:IFS2fix}
\mathcal{J}=\{f_{1,i}(x)=\lambda_{1,i} x\}_{i=1}^{n_1}\cup \{f_{2,i}(x)=\lambda_{2,i} x+(1-\lambda_{2,i})\}_{i=1}^{n_2}.
\end{equation}
Clearly, the IFS $\mathcal{J}$ is a special case of the family of IFSs defined in \eqref{eq:goalIFS}, where $t_1=0$ and $t_2=1$. Let $\Sigma$ be the symbolic space corresponding to the IFS $\mathcal{J}$ with symbols $I=\{(i,j): i\in\{1,2\}, j\in\{1,\ldots,n_i\}\}.$

Denote $L=n_1+n_2$, and $\underline{\lambda}=(\lambda_{i,j})_{(i,j)\in I}\in(0,1)^{L}$ the vector of contraction ratios.  Observe that for every $\epsilon>0$ such that $\epsilon\leq\min_{(i,j)\in I}\min\{\lambda_{i,j},1-\lambda_{i,j}\}$ then $L_{\mathcal{J}}\subseteq [0,1-\epsilon]$ and $R_{\mathcal{J}}\subseteq [\epsilon,1]$, where $L_{\mathcal{J}}:=\bigcup_{i=1}^{n_1}f_{1,i}([0,1])$ and $R_{\mathcal{J}}:=\bigcup_{i=1}^{n_2}f_{2,i}([0,1]).$ 

Let $\Pi$ be the natural projection corresponding to the IFS $\mathcal{J}$. For $\mathbf{i},\mathbf{j}\in \Sigma$, we define $$
\Delta_{\mathbf{i},\mathbf{j}}(\underline{\lambda})=\Pi(\mathbf{i})-\Pi(\mathbf{j}).
$$ 

Let us define the following set of pairs:
\begin{align}\label{firstsetB}
\mathcal{B}:=\bigg\{(\mathbf{i},\mathbf{j})\in \Sigma\times \Sigma: b_1^{\mathbf{i}}\cap b_1^{\mathbf{j}}=\emptyset~\&~b_1^{\mathbf{i}}\neq\mathbf{i}~\&~b_1^{\mathbf{j}}\neq\mathbf{j}\bigg\}.
\end{align}
We divide the set $\mathcal{B}$ further:
\begin{equation}\label{eqBdiv1}
\mathcal{B}_{1}:=\bigg\{(\mathbf{i},\mathbf{j})\in \mathcal{B}: t_{b_{1}^{\mathbf{i}}}\ne t_{b_{1}^{\mathbf{j}}} \bigg\}\text{ and }
\mathcal{B}_2:=\mathcal{B}\setminus\mathcal{B}_1=\left\{(\mathbf{i},\mathbf{j})\in \mathcal{B}: t_{b_1^{\mathbf{i}}}=t_{b_1^{\mathbf{j}}}\right\}.
\end{equation}
Now, set $\tilde{N_{0}}:=\lceil \frac{(1-\epsilon)(2+\epsilon)}{\epsilon^3} \rceil+1$, and divide $\mathcal{B}_2$ further: 
\begin{equation}\label{eqBdiv2}
  \mathcal{B}_{3}=\bigg\{(\mathbf{i},\mathbf{j})\in \mathcal{B}_2:~\max_{k\in I}\left\{\max\left\{\#_kb_{1}^{\mathbf{i}},\#_kb_{1}^{\mathbf{j}}\right\}\right\}\leq \tilde{N_{0}}\bigg\}\text{ and }\mathcal{B}_4=\mathcal{B}_2\setminus\mathcal{B}_3.
\end{equation}
Observe that $\mathcal{B}_3$ is a compact subset of $\Sigma\times\Sigma$.

\begin{lemma}\label{ExponentialA1b}
    Let $\epsilon>0$ be arbitrary. Then there exists a constant $C>0$ such that for every $\underline{\lambda}\in[\epsilon,1-\epsilon]^{L}$ and for every $(\mathbf{i},\mathbf{j})\in \mathcal{B}_4$
    $$
    \max\left\{|\Delta_{\mathbf{i},\mathbf{j}}(\underline{\lambda})|,\bigg{|}\frac{\partial\Delta_{\mathbf{i},\mathbf{j}}}{\partial \lambda_{k}}(\underline{\lambda})\bigg{|}\right\}\geq C{\epsilon}^{3\max\{|b_{1}^{\mathbf{i}}|,|b_{1}^{\mathbf{j}}|\}},$$ where $k$ is such that $\max\{\#_kb_{1}^{\mathbf{i}},\#_kb_{1}^{\mathbf{j}}\}>\tilde{N_{0}}$.
\end{lemma}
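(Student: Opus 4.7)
The statement is essentially a specialization of the general Lemmas~\ref{GenExponentialA1} and \ref{GenFirstorderTrans} to the two-fixed-point IFS $\mathcal{J}$, so my plan is to reduce $\mathcal{B}_4$ to the classes $\mathcal{A}_1\cup\mathcal{A}_2$ of the general setting after a symmetrization step that handles which of the two fixed points, $0$ or $1$, is the common fixed point of the first blocks.

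First I would verify that $\mathcal{J}$ fits the hypotheses \eqref{it:1}--\eqref{it:2} of the general IFS $\mathcal{F}$ with $M_1=n_1$, $M_2=L$, and $A=1$. Indeed, on $[0,1]$ the non-origin maps act by $f_{2,j}([0,1])=[1-\lambda_{2,j},1]\subseteq[\epsilon,1]$ whenever $\lambda_{2,j}\in[\epsilon,1-\epsilon]$. With $A=1$ the threshold $N_0=\lceil(1-\epsilon)(2A+\epsilon)/\epsilon^3\rceil+1$ of Lemma~\ref{GenFirstorderTrans} coincides exactly with $\tilde{N}_0$, which is the matching that lets me slot $\mathcal{B}_4$ into $\mathcal{A}_2$. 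Similarly the trivial identification of constants shows the ratio interval $(\epsilon/(2A),2A/\epsilon)$ becomes $(\epsilon/2,2/\epsilon)$.

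For $(\mathbf{i},\mathbf{j})\in\mathcal{B}_4$ the common value $t_{b_1^{\mathbf{i}}}=t_{b_1^{\mathbf{j}}}$ is either $0$ or $1$. In the case the common fixed point equals $0$, both initial symbols lie in $\{(1,j):j=1,\ldots,n_1\}$, matching the range $1\leq i_1,j_1\leq M_1$ required by $\mathcal{A}_1,\mathcal{A}_2$. I then split on the value of $\lambda_{b_1^{\mathbf{i}}}/\lambda_{b_1^{\mathbf{j}}}$: if it lies outside $(\epsilon/2,2/\epsilon)$ the pair is in $\mathcal{A}_1$ and Lemma~\ref{GenExponentialA1} yields $|\Delta_{\mathbf{i},\mathbf{j}}(\underline{\lambda})|\geq \epsilon^{2\max\{|b_1^{\mathbf{i}}|,|b_1^{\mathbf{j}}|\}}$; if it lies inside, then the $\tilde{N}_0$-condition built into $\mathcal{B}_4$ places the pair in $\mathcal{A}_2$, and Lemma~\ref{GenFirstorderTrans} gives $|\partial\Delta_{\mathbf{i},\mathbf{j}}/\partial\lambda_k|\geq C\epsilon^{\max\{|b_1^{\mathbf{i}}|,|b_1^{\mathbf{j}}|\}}\geq C\epsilon^{2\max\{|b_1^{\mathbf{i}}|,|b_1^{\mathbf{j}}|\}}$, which is the desired bound after taking the final $C$ to be the minimum of $1$ and the constant produced by that lemma.

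The case of common fixed point $1$ I would dispose of by symmetry: conjugation by the isometry $\tau(x)=1-x$ sends $\mathcal{J}$ to an IFS of the same form but with the two groups swapped, preserves both $|\Delta_{\mathbf{i},\mathbf{j}}|$ and $\partial\Delta_{\mathbf{i},\mathbf{j}}/\partial\lambda_k$, preserves the block structure (the notions $b_1^{\mathbf{i}}$, disjointness, and $\#_k$-counts all carry over), and turns the former first-block fixed point $1$ into a first-block fixed point $0$ for the conjugated system. The case already treated then applies verbatim.

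I do not anticipate a serious obstacle: the lemma is a bookkeeping translation. The only careful point is confirming that the constants line up (in particular $N_0=\tilde{N}_0$ with $A=1$, and that both the absolute bound $\epsilon^{2\max}$ of Lemma~\ref{GenExponentialA1} and the derivative bound $C\epsilon^{\max}$ of Lemma~\ref{GenFirstorderTrans} fit inside the common exponent $2\max\{|b_1^{\mathbf{i}}|,|b_1^{\mathbf{j}}|\}$ of the present statement), and verifying that the symbol index $k$ produced by Lemma~\ref{GenFirstorderTrans} is exactly the one for which $\max\{\#_kb_1^{\mathbf{i}},\#_kb_1^{\mathbf{j}}\}>\tilde{N}_0$.
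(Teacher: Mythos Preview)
Your proposal is correct and follows essentially the same approach as the paper: reduce to the case of common fixed point $0$ via conjugation by $x\mapsto 1-x$, then split on whether $\lambda_{b_1^{\mathbf{i}}}/\lambda_{b_1^{\mathbf{j}}}$ lies outside or inside $(\epsilon/2,2/\epsilon)$ and invoke Lemma~\ref{GenExponentialA1} or Lemma~\ref{GenFirstorderTrans} respectively. Your additional bookkeeping (checking $A=1$, $N_0=\tilde{N}_0$, and that both bounds fit under the exponent $2\max$) is more explicit than the paper's version but not different in substance.
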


\begin{proof} Let $(\mathbf{i},\mathbf{j})\in \mathcal{B}_4$. Without loss of generality, we may assume that $t_{b_1^{\mathbf{i}}}=t_{b_1^{\mathbf{j}}}=0$. The case $t_{b_1^{\mathbf{i}}}=t_{b_1^{\mathbf{j}}}=1$ can be handled similarly by considering the conjugated IFS with conjugation map $x\mapsto 1-x$.

First, suppose that $\frac{\lambda_{b_{1}^{\mathbf{i}}}}{\lambda_{b_{1}^{\mathbf{j}}}}\notin \bigg(\dfrac{{\epsilon}}{2},\dfrac{2}{{\epsilon}}\bigg)$. Then, by Lemma \ref{GenExponentialA1}, we get the
$$
|\Delta_{\mathbf{i},\mathbf{j}}(\underline{\lambda})|\geq {\epsilon}^{3\max\{|b_{1}^{\mathbf{i}}|,|b_{1}^{\mathbf{j}}|\}}.
$$

Now, we assume that $\frac{\lambda_{b_{1}^{\mathbf{i}}}}{\lambda_{b_{1}^{\mathbf{j}}}}\in \bigg(\dfrac{{\epsilon}}{2},\dfrac{2}{{\epsilon}}\bigg)$. Then by Lemma \ref{GenFirstorderTrans}, we get that 
$$
\bigg{|}\frac{\partial\Delta_{\mathbf{i},\mathbf{j}}}{\partial \lambda_{k}}(\underline{\lambda})\bigg{|}\geq C{\epsilon}^{\max\{|b_{1}^{\mathbf{i}}|,|b_{1}^{\mathbf{j}}|\}}
$$
for some uniform constant $C>0$.
\end{proof}

\begin{lemma}\label{Trans3} Let $\epsilon>0$ be arbitrary. Then there exist $p\geq0$ and $\tilde{C}>0$ such that for every  $(\mathbf{i},\mathbf{j})\in \mathcal{B}_3$ and for all $\underline{\lambda}\in [\epsilon, 1-\epsilon]^{L}$, there exists $(m_{i,j})_{(i,j)\in I}\in\N^L$ such that $m=\sum_{(i,j)\in I}m_{i,j}\leq p$ and
$$\left|\frac{\partial^m\Delta_{\mathbf{i},\mathbf{j}}}{\prod_{(i,j)\in I}\partial^{m_{i,j}}\lambda_{i,j}}(\underline{\lambda})\right|>\tilde{C}.$$ 
\end{lemma}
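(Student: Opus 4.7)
The plan is to combine analyticity of $\underline{\lambda}\mapsto\Delta_{\mathbf{i},\mathbf{j}}(\underline{\lambda})$ with compactness of $\mathcal{B}_3\times[\epsilon,1-\epsilon]^{L}$. Specifically, I will show that for every $(\mathbf{i},\mathbf{j})\in\mathcal{B}_3$ the analytic function $\Delta_{\mathbf{i},\mathbf{j}}$ is not identically zero on a neighbourhood of $[\epsilon,1-\epsilon]^{L}$; then at each point of $[\epsilon,1-\epsilon]^{L}$ some partial derivative of finite order is non-zero, and a standard continuity-plus-compactness extraction yields the uniform $p$ and $\tilde C$.

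For the non-vanishing, I assume without loss of generality $t_{b_1^{\mathbf{i}}}=t_{b_1^{\mathbf{j}}}=0$ (the case $=1$ reduces via the involution $x\mapsto 1-x$). Using the block formula from Section~\ref{sec:block}, a direct induction on blocks yields
\[
\Pi(\mathbf{i})=\sum_{k\geq 1}(-1)^{k+1}\prod_{l=1}^{k}\lambda_{b_l^{\mathbf{i}}}
\]
(the sum truncates at $B^{\mathbf{i}}-1$ if $B^{\mathbf{i}}<\infty$ and converges uniformly on $[\epsilon,1-\epsilon]^{L}$ otherwise), and likewise for $\Pi(\mathbf{j})$. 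I then inspect the monomial $M_\star:=\lambda_{b_1^{\mathbf{j}}}=\prod_{(i,j)\in I}\lambda_{i,j}^{\#_{i,j}b_1^{\mathbf{j}}}$. Its coefficient in $\Pi(\mathbf{j})$ equals $+1$: the $k=1$ term is exactly $M_\star$, and for $k\geq 2$ the product $\prod_{l=1}^{k}\lambda_{b_l^{\mathbf{j}}}$ necessarily involves variables from the non-empty second block $b_2^{\mathbf{j}}$, which is disjoint from $b_1^{\mathbf{j}}$ because consecutive blocks have different fixed points. In $\Pi(\mathbf{i})$, on the other hand, every monomial $\prod_{l=1}^{k}\lambda_{b_l^{\mathbf{i}}}$ involves a variable from the non-empty block $b_1^{\mathbf{i}}$; since $b_1^{\mathbf{i}}\cap b_1^{\mathbf{j}}=\emptyset$ by hypothesis, none can equal $M_\star$, so the coefficient of $M_\star$ in $\Pi(\mathbf{i})$ is $0$. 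Hence $M_\star$ appears in $\Delta_{\mathbf{i},\mathbf{j}}$ with coefficient $-1\neq 0$.

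For the compactness step, the bound $\#_k b_1^{\mathbf{i}},\#_k b_1^{\mathbf{j}}\leq\tilde N_0$ forces $|b_1^{\mathbf{i}}|,|b_1^{\mathbf{j}}|\leq L\tilde N_0$, so the first blocks lie in a finite set, which together with the compactness of $\Sigma$ for the tails makes $\mathcal{B}_3$ compact in $\Sigma\times\Sigma$. For $p\geq 0$ I set
\[
\Psi_p(\mathbf{i},\mathbf{j},\underline{\lambda}):=\max_{\substack{(m_{i,j})\in\N^{L}\\ \sum_{(i,j)\in I}m_{i,j}\leq p}}\left|\frac{\partial^{\sum m_{i,j}}\Delta_{\mathbf{i},\mathbf{j}}}{\prod_{(i,j)\in I}\partial^{m_{i,j}}\lambda_{i,j}}(\underline{\lambda})\right|.
\]
Termwise uniform convergence of the block series and of its $\underline{\lambda}$-derivatives, together with continuity of $\mathbf{i}\mapsto\Pi(\mathbf{i})$ in the product topology on $\Sigma$, shows that $\Psi_p$ is jointly continuous on the compact set $\mathcal{B}_3\times[\epsilon,1-\epsilon]^{L}$. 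Since $\Delta_{\mathbf{i},\mathbf{j}}\not\equiv 0$ is real-analytic, at each point some partial derivative of finite order is non-zero, so $\bigcup_{p\geq 0}\{\Psi_p>0\}$ covers the compact set; as $\Psi_p\leq\Psi_{p+1}$ pointwise, a single $p$ works, and $\tilde C:=\min\Psi_p>0$ by continuity. The main obstacle is the monomial-coefficient argument above, which rests crucially on both $b_1^{\mathbf{i}}\cap b_1^{\mathbf{j}}=\emptyset$ (built into the definition of $\mathcal{B}$) and on consecutive blocks drawing from disjoint symbol sets; without either, higher-order block products could in principle reproduce $M_\star$ and cancel its coefficient.
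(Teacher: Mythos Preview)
Your proof is correct and follows essentially the same route as the paper's: both establish $\Delta_{\mathbf{i},\mathbf{j}}\not\equiv 0$ by exhibiting a nonzero Taylor coefficient at $\underline{0}$ (the paper differentiates with respect to the exponents in $b_1^{\mathbf{i}}$ and evaluates at $\underline{0}$, you isolate the monomial $\lambda_{b_1^{\mathbf{j}}}$ --- symmetric choices), and then combine analyticity with compactness of $\mathcal{B}_3\times[\epsilon,1-\epsilon]^{L}$. The only cosmetic difference is your direct open-cover extraction of $p$ and $\tilde C$ versus the paper's contradiction argument \`a la Hochman; both rely on the same joint continuity of the partial derivatives in $(\mathbf{i},\mathbf{j},\underline{\lambda})$.
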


We use the idea of Hochman \cite[Proposition 6.24]{Hochman2022} to prove this lemma.

\begin{proof}
   Let $(\mathbf{i},\mathbf{j})\in \mathcal{B}_3$. Without loss of generality, we may assume that  $t_{b_1^{\mathbf{i}}}=t_{b_1^{\mathbf{j}}}=0$. The case $t_{b_1^{\mathbf{i}}}=t_{b_1^{\mathbf{j}}}=1$ can be handled similarly by considering the conjugated IFS with conjugation map $x\mapsto 1-x$.
   
   First, we show that $\Delta_{\mathbf{i},\mathbf{j}}(\underline{\lambda})\not\equiv 0$ over $\underline{\lambda}\in [\epsilon, 1-\epsilon]^{L}$. We have 
    \begin{align*}
   \Delta_{\mathbf{i},\mathbf{j}}(\underline{\lambda})&=\Pi(\mathbf{i})-\Pi(\mathbf{j})\\&=\sum_{l=1}^{\infty}(-1)^{l+1}\lambda_{b_{1}^{\mathbf{i}}}\lambda_{b_{2}^{\mathbf{i}}}\dots \lambda_{b_{l}^{\mathbf{i}}}-\sum_{l=1}^{\infty}(-1)^{l+1}\lambda_{b_{1}^{\mathbf{j}}}\lambda_{b_{2}^{\mathbf{j}}}\dots \lambda_{b_{l}^{\mathbf{j}}}\\&= \lambda_{b_{1}^{\mathbf{i}}}-\lambda_{b_{1}^{\mathbf{j}}}+ \sum_{l=2}^{\infty}(-1)^{l+1}\lambda_{b_{1}^{\mathbf{i}}}\lambda_{b_{2}^{\mathbf{i}}}\dots \lambda_{b_{l}^{\mathbf{i}}}-\sum_{l=2}^{\infty}(-1)^{l+1}\lambda_{b_{1}^{\mathbf{j}}}\lambda_{b_{2}^{\mathbf{j}}}\dots \lambda_{b_{l}^{\mathbf{j}}}.   \end{align*} 
   
   Since $b_{1}^{\mathbf{i}}\cap b_{1}^{\mathbf{j}}=\emptyset$, we get
   \begin{align*}
 \frac{{\partial}^{|b_{1}^{\mathbf{i}}|} \Delta_{\mathbf{i},\mathbf{j}}}{\prod_{(i,j)\in I}{\partial}^{\#_{i,j}b_{1}^{\mathbf{i}}} \lambda_{i,j}}(\underline{\lambda})&= \prod_{(i,j)\in I}(\#_{i,j}b_{1}^{\mathbf{i}})!+ \sum_{l=2}\frac{{\partial}^{|b_{1}^{\mathbf{i}}|} ((-1)^{l+1}\lambda_{b_{1}^{\mathbf{i}}}\lambda_{b_{2}^{\mathbf{i}}}\dots \lambda_{b_{l}^{\mathbf{i}}})}{\prod_{(i,j)\in I}{\partial}^{\#_{i,j}b_{1}^{\mathbf{i}}} \lambda_{i,j}}\\
 &\quad-\sum_{l=2}\frac{{\partial}^{|b_{1}^{\mathbf{i}}|} ((-1)^{l+1}\lambda_{b_{1}^{\mathbf{i}}}\lambda_{b_{2}^{\mathbf{j}}}\dots \lambda_{b_{l}^{\mathbf{j}}})}{\prod_{(i,j)\in I}{\partial}^{\#_{i,j}b_{1}^{\mathbf{i}}} \lambda_{i,j}}.      
   \end{align*}
This implies that $\frac{{\partial}^{|b_{1}^{\mathbf{i}}|} \Delta_{\mathbf{i},\mathbf{j}}}{\prod_{(i,j)\in I}{\partial}^{\#_{i,j}b_{1}^{\mathbf{i}}} \lambda_{i,j}}(\underline{0})=\prod_{(i,j)\in I}(\#_{i,j}b_{1}^{\mathbf{i}})!$.  Since the function $\Delta_{\mathbf{i},\mathbf{j}}(\underline{\lambda})$ is analytic on $(-1+\frac{\epsilon}{2}, 1-\frac{\epsilon}{2})^{L}$, we have $\Delta_{\mathbf{i},\mathbf{j}}(\underline{\lambda}) \not\equiv 0$ over $\underline{\lambda}\in [\epsilon, 1-\epsilon]^{L}$. 

Let's continue the proof using an argument by contradiction. Assume that $\forall~n\geq 1$, $\exists~ \underline{\lambda}_n\in [\epsilon, 1-\epsilon]^{L}$ and $\exists~ (\mathbf{i}^{n},\mathbf{j}^{n})\in \mathcal{B}_3$ such that   
$$
\bigg{|} \frac{\partial^m\Delta_{\mathbf{i}^{n},\mathbf{j}^{n}}}{\prod_{(i,j)\in I}\partial^{m_{i,j}}\lambda_{i,j}}(\underline{\lambda}_n)\bigg{|}<\frac{1}{n}\quad \forall~ (m_{i,j})_{(i,j)\in I}\in\N^{L}\text{ with }m=\sum_{(i,j)\in I}m_{i,j}\leq n.
$$ 
Due to the compactness of $\mathcal{B}_{3},$ there exists a subsequence $\{n_l\}_{l\geq 1}$, $\underline{\lambda}_{0}\in [\epsilon, 1-\epsilon]^{L}$ and $(\mathbf{i},\mathbf{j})\in \mathcal{B}_3$ such that 
$$\lim_{l\to \infty}\underline{\lambda}_{n_l}=\underline{\lambda}_{0},\quad \lim_{l\to \infty}(\mathbf{i}^{n_l},\mathbf{j}^{n_l} )=(\mathbf{i},\mathbf{j}).$$
This also implies that 
$$
\frac{\partial^m\Delta_{\mathbf{i}^{n_l},\mathbf{j}^{n_l}}}{\prod_{(i,j)\in I}\partial^{m_{i,j}}\lambda_{i,j}} \to \frac{\partial^m\Delta_{\mathbf{i},\mathbf{j}}}{\prod_{(i,j)\in I}\partial^{m_{i,j}}\lambda_{i,j}}  \quad \text{uniformly on }[\epsilon, 1-\epsilon]^{L}
$$
for every $(m_{i,j})_{(i,j)\in I}\in\N^{L}$. Thus, 
$$\frac{\partial^m\Delta_{\mathbf{i},\mathbf{j}}}{\prod_{(i,j)\in I}\partial^{m_{i,j}}\lambda_{i,j}}(\underline{\lambda}_{0})=0\quad \forall~ (m_{i,j})_{(i,j)\in I}\in\N^{L}$$ Since $\Delta_{\mathbf{i},\mathbf{j}}(\underline{\lambda})$ is analytic function, we have $\Delta_{\mathbf{i},\mathbf{j}}(\underline{\lambda})\equiv 0$, which is a contradiction.
\end{proof}

\begin{lemma}\label{nonzeroA0}
   Let $\epsilon>0$ be arbitrary. Then there exist $p\geq0$ and $\tilde{C}>0$ such that for every  $(\mathbf{i},\mathbf{j})\in \mathcal{B}_1$ and for all $\underline{\lambda}\in [\epsilon, 1-\epsilon]^{L}$, there exists $(m_{i,j})_{(i,j)\in I}\in\N^L$ such that $m=\sum_{(i,j)\in I}m_{i,j}\leq p$ and
$$\left|\frac{\partial^m\Delta_{\mathbf{i},\mathbf{j}}}{\prod_{(i,j)\in I}\partial^{m_{i,j}}\lambda_{i,j}}(\underline{\lambda})\right|>\tilde{C}.$$ 
\end{lemma}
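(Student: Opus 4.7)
My plan is to mirror the proof of Lemma~\ref{Trans3} closely. The essential simplification here is that for pairs $(\mathbf{i},\mathbf{j})\in\mathcal{B}_1$ the two fixed points $t_{b_1^{\mathbf{i}}}$ and $t_{b_1^{\mathbf{j}}}$ are \emph{distinct by definition}, so the non-triviality of $\Delta_{\mathbf{i},\mathbf{j}}$ as an analytic function of $\underline{\lambda}$ falls out from the zeroth-order term, rather than requiring the delicate high-order derivative computation needed in the $\mathcal{B}_3$ case.

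\textbf{Step 1: Non-triviality of $\Delta_{\mathbf{i},\mathbf{j}}$.} I will use that the series
$$\Pi(\mathbf{i})=t_{b_1^{\mathbf{i}}}+\sum_{l=1}^{\infty}\lambda_{b_1^{\mathbf{i}}}\cdots\lambda_{b_l^{\mathbf{i}}}\left(t_{b_{l+1}^{\mathbf{i}}}-t_{b_l^{\mathbf{i}}}\right)$$
converges and extends analytically in $\underline{\lambda}$ to a neighbourhood of $\underline{\lambda}=\underline{0}$ inside $(-1+\epsilon/2,1-\epsilon/2)^L$. Evaluating at $\underline{\lambda}=\underline{0}$ yields $\Pi(\mathbf{i})|_{\underline{0}}=t_{b_1^{\mathbf{i}}}$, and similarly $\Pi(\mathbf{j})|_{\underline{0}}=t_{b_1^{\mathbf{j}}}$, so
$$\Delta_{\mathbf{i},\mathbf{j}}(\underline{0})=t_{b_1^{\mathbf{i}}}-t_{b_1^{\mathbf{j}}}\neq 0$$
by the very definition of $\mathcal{B}_1$. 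Consequently $\Delta_{\mathbf{i},\mathbf{j}}\not\equiv 0$ on its connected domain of analyticity.

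\textbf{Step 2: Compactness-and-contradiction.} I will then repeat verbatim the second half of the proof of Lemma~\ref{Trans3}. Assume, toward a contradiction, that for every $n\geq 1$ there exist $(\mathbf{i}^{n},\mathbf{j}^{n})\in\mathcal{B}_1$ and $\underline{\lambda}_n\in[\epsilon,1-\epsilon]^L$ such that
$$\left|\frac{\partial^m\Delta_{\mathbf{i}^{n},\mathbf{j}^{n}}}{\prod_{(i,j)\in I}\partial^{m_{i,j}}\lambda_{i,j}}(\underline{\lambda}_n)\right|<\frac{1}{n}\quad\text{for every }(m_{i,j})\in\mathbb{N}^L\text{ with }\sum_{(i,j)}m_{i,j}\leq n.$$
By compactness of $\mathcal{B}_1$ in $\Sigma\times\Sigma$ and of $[\epsilon,1-\epsilon]^L$, I can extract a convergent subsequence $(\underline{\lambda}_{n_l},\mathbf{i}^{n_l},\mathbf{j}^{n_l})\to(\underline{\lambda}_0,\mathbf{i},\mathbf{j})$ with limit point in $[\epsilon,1-\epsilon]^L\times\mathcal{B}_1$. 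The partial derivatives of $\Delta_{\mathbf{i}^{n_l},\mathbf{j}^{n_l}}$ converge uniformly on $[\epsilon,1-\epsilon]^L$ to those of $\Delta_{\mathbf{i},\mathbf{j}}$ for each fixed order (since the defining series decays exponentially, uniformly in $\underline{\lambda}$, under $\lambda_{i,j}\in[\epsilon,1-\epsilon]$), which forces every partial derivative of $\Delta_{\mathbf{i},\mathbf{j}}$ to vanish at $\underline{\lambda}_0$. Analyticity then gives $\Delta_{\mathbf{i},\mathbf{j}}\equiv 0$, contradicting Step~1.

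\textbf{Main obstacle.} I do not anticipate any serious difficulty: the obstruction overcome in Lemma~\ref{Trans3} was precisely establishing $\Delta_{\mathbf{i},\mathbf{j}}\not\equiv 0$ when the two first blocks share a fixed point, and that obstruction disappears for $\mathcal{B}_1$. The only mild point to verify carefully is the uniform convergence of derivatives of each fixed order under symbolic convergence $\mathbf{i}^{n_l}\to\mathbf{i}$; this is routine from the exponentially convergent series representation, exactly as implicitly used in the proof of Lemma~\ref{Trans3}.
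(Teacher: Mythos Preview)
Your proposal is correct and matches the paper's intended approach: the paper omits the proof entirely, saying only that it is similar to Lemma~\ref{Trans3}, and your argument is precisely that adaptation. Your observation that non-triviality follows already from the zeroth-order term $\Delta_{\mathbf{i},\mathbf{j}}(\underline{0})=t_{b_1^{\mathbf{i}}}-t_{b_1^{\mathbf{j}}}=\pm1$ (since $t_1=0$, $t_2=1$ in the system $\mathcal{J}$) is exactly the simplification that makes $\mathcal{B}_1$ easier than $\mathcal{B}_3$, and the compactness-and-contradiction step carries over verbatim since the paper explicitly notes that $\mathcal{B}_1$ is compact.
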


\begin{proof}
Actually, we claim that the statement of the lemma holds for the greater set $\overline{\mathcal{B}_1}=\bigg\{(\mathbf{i},\mathbf{j})\in \Sigma\times\Sigma: t_{b_{1}^{\mathbf{i}}}\ne t_{b_{1}^{\mathbf{j}}} \bigg\}$, and clearly, $\mathcal{B}_1\subseteq\overline{\mathcal{B}_1}$. Observe that the set $\overline{\mathcal{B}_1}$ is a compact subset of $\Sigma\times\Sigma$, and so, the proof can be similarly completed to the proof of Lemma~\ref{Trans3}. We leave the details for the reader.
\end{proof}

For $n\geq1$, let 
$$
\mathcal{B}^{(n)}=\left\{(\mathbf{i},\mathbf{j})\in\Sigma_n\times\Sigma_n: b_1^{\mathbf{i}}\cap b_1^{\mathbf{j}}=\emptyset~\&~b_1^{\mathbf{i}}\neq\mathbf{i}~\&~b_1^{\mathbf{j}}\neq\mathbf{j}\right\},
$$
and divide it further into $\mathcal{B}^{(n)}_1, \mathcal{B}^{(n)}_2, \mathcal{B}^{(n)}_3$ and $\mathcal{B}^{(n)}_4$ as in \eqref{eqBdiv1} and \eqref{eqBdiv2} by
$$
\mathcal{B}^{(n)}_k=\{(\mathbf{i},\mathbf{j})\in\mathcal{B}^{(n)}:~(\mathbf{i}(1,1)\cdots,\mathbf{j}(1,1)\cdots)\in\mathcal{B}_k\}.
$$
Define 
\begin{equation}\label{eq:E1}
E_\epsilon=\bigcap_{H\geq 1}\bigcap_{\tilde{N}\geq1}\bigcup_{n\geq \tilde{N}}\bigcup_{(\mathbf{i},\mathbf{j})\in\mathcal{B}^{(n)}}\bigg\{\underline{\lambda}\in[\epsilon,1-\epsilon]^L: |\Delta_{\mathbf{i},\mathbf{j}}(\underline{\lambda})|< \frac{1}{H^n}\bigg\}.
\end{equation}

\begin{proposition}\label{ExponentialE1}
    Let $E_\epsilon\subset [\epsilon, 1-\epsilon]^L$ be the set defined as above. Then, $\dim_{H}(E_\epsilon)\leq L-1.$ 
\end{proposition}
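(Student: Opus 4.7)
Setting $U_{\mathbf{i},\mathbf{j},n,H}:=\{\underline{\lambda}\in[\epsilon,1-\epsilon]^L:|\Delta_{\mathbf{i},\mathbf{j}}(\underline{\lambda})|<H^{-n}\}$ and $V_n^{(H)}:=\bigcup_{(\mathbf{i},\mathbf{j})\in\mathcal{B}^{(n)}}U_{\mathbf{i},\mathbf{j},n,H}$, the set $E_\epsilon$ is the nested intersection over $H\ge 1$ of the $\limsup$ sets $\bigcap_{\tilde N}\bigcup_{n\ge\tilde N}V_n^{(H)}$, and these sets are decreasing in $H$. Hence $\dim_H E_\epsilon\le\inf_H\dim_H\bigl(\limsup_n V_n^{(H)}\bigr)$, so it suffices to show that for every $\delta>0$ there is $H=H(\delta)$ for which $\mathcal{H}^{L-1+\delta}(\limsup_n V_n^{(H)})=0$. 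The plan is to achieve this via a Borel--Cantelli--type argument: cover each $V_n^{(H)}$ by small cubes and verify that the total $(L-1+\delta)$-Hausdorff cover sum is summable in $n$. The covers are built piece by piece using the decomposition $\mathcal{B}^{(n)}=\mathcal{B}_1^{(n)}\cup\mathcal{B}_3^{(n)}\cup\mathcal{B}_4^{(n)}$.

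For $(\mathbf{i},\mathbf{j})\in\mathcal{B}_1^{(n)}\cup\mathcal{B}_3^{(n)}$, Lemmas~\ref{Trans3} and~\ref{nonzeroA0} furnish uniform constants $p\ge0$ and $\tilde C>0$ such that at every $\underline{\lambda}$ some mixed partial derivative of $\Delta_{\mathbf{i},\mathbf{j}}$ of order at most $p$ exceeds $\tilde C$ in absolute value. Invoking the standard analytic-transversality estimate for sub-level sets of real-analytic functions (compare \cite[Section~6.8]{BSS23}), the slab $U_{\mathbf{i},\mathbf{j},n,H}$ is covered by $O(\eta^{-(L-1)})$ axis-parallel cubes of side $\eta\asymp(H^{-n}/\tilde C)^{1/p}$, with constants depending only on $p,L,\epsilon$. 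Since $|\mathcal{B}_1^{(n)}\cup\mathcal{B}_3^{(n)}|\le L^{2n}$, the total $(L-1+\delta)$-cover sum contributed by these pairs is at most $O\bigl((L^{2p}H^{-\delta})^{n/p}\bigr)$, which is geometrically summable as soon as $H>L^{2p/\delta}$.

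For $(\mathbf{i},\mathbf{j})\in\mathcal{B}_4^{(n)}$ with $m:=\max\{|b_1^{\mathbf{i}}|,|b_1^{\mathbf{j}}|\}$, Lemma~\ref{ExponentialA1b} yields a first-order derivative bound only of order $C\epsilon^{2m}$, where $m$ may be as large as $n$. Taking $H=\epsilon^{-2-\theta}$ with $\theta>0$ ensures that for all sufficiently large $n$ the inequality $H^{-n}<C\epsilon^{2m}$ holds for every $m\le n$, so on $U_{\mathbf{i},\mathbf{j},n,H}$ only the derivative alternative of the lemma survives; the slab has thickness $\eta_m=2H^{-n}/(C\epsilon^{2m})=(2/C)\epsilon^{(2+\theta)n-2m}$ in the coordinate direction $k$ furnished by the lemma, and is covered by $O(\eta_m^{-(L-1)})$ cubes of side $\eta_m$. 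The decisive combinatorial point is that, since $N\ge 2$ forces $n_{\max}\le L-1$, the number of $\mathbf{i}\in\Sigma_n$ with $|b_1^{\mathbf{i}}|=m$ is at most $O(L^n\alpha^m)$ with $\alpha:=n_{\max}/L<1$. Grouping pairs by $m$, the $(L-1+\delta)$-cover sum contributed by $\mathcal{B}_4^{(n)}$ is bounded by a constant multiple of $L^{2n}\epsilon^{(2+\theta)n\delta}\sum_{m\ge\tilde{N_0}}(\alpha\epsilon^{-2\delta})^m$; choosing $\delta<\log(1/\alpha)/(2\log(1/\epsilon))$ makes the $m$-series converge, and then $\theta$ large enough in terms of $\delta$ makes the remaining prefactor decay geometrically in $n$.

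The main obstacle is precisely the $\mathcal{B}_4$ piece: the derivative lower bound $C\epsilon^{2m}$ degrades exponentially with the first-block length $m$, which a priori ranges up to $n$, while the crude pair count $L^{2n}$ is insensitive to $m$. The balancing trick is the $\alpha^m$ decay in the refined count by $m$, combined with the freedom to enlarge $H$ in order to buy an extra $\epsilon^{\theta n\delta}$ factor. Once the three contributions are summed, the Borel--Cantelli principle yields $\mathcal{H}^{L-1+\delta}(\limsup_n V_n^{(H)})=0$ for $H=H(\delta)$ sufficiently large, and letting $\delta\downarrow 0$ gives $\dim_H E_\epsilon\le L-1$.
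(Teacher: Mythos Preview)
Your argument is correct, but the treatment of $\mathcal{B}_4^{(n)}$ is unnecessarily elaborate compared with the paper. The paper simply replaces the first-block length $m=\max\{|b_1^{\mathbf{i}}|,|b_1^{\mathbf{j}}|\}$ by its worst case $m\le n$ in Lemma~\ref{ExponentialA1b}, obtaining the uniform lower bound $C\epsilon^{2n}$; then, invoking \cite[Lemma~6.26]{Hochman2022}, each $U_{\mathbf{i},\mathbf{j},n,H}$ is covered by $O((\epsilon^{2n}H^n)^{L-1})$ balls of radius $(C\epsilon^{2n}H^n)^{-1}$, and the crude pair count $L^{2n}$ already yields the summable $(L^2(\epsilon^2H)^{L-1-s})^n$ once $H>\epsilon^{-2}$. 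Thus the ``main obstacle'' you identify---balancing the degradation $\epsilon^{2m}$ against the pair count---is not really an obstacle: the refined count $O(L^{2n}\alpha^m)$ with $\alpha=n_{\max}/L$, and the resulting restriction $\delta<\log(1/\alpha)/(2\log(1/\epsilon))$, are not needed; one can let $s\downarrow L-1$ directly by sending $H\to\infty$. For $\mathcal{B}_1^{(n)}\cup\mathcal{B}_3^{(n)}$ your handling is essentially identical to the paper's (the paper cites \cite[Proposition~6.28]{Hochman2022} rather than \cite{BSS23}, and gets $H^{-n/2^p}$ rather than $H^{-n/p}$ in the radius, an inessential difference). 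Your refined counting trick is sound and might be useful in situations where the crude bound fails, but here it only adds work. One small point: in the $\mathcal{B}_4$ step you assert the single-slab cover $O(\eta_m^{-(L-1)})$ without justification; strictly speaking you need a lemma (such as Hochman's) controlling the number of components of the sublevel set per fiber, since the derivative bound is only guaranteed on $U_{\mathbf{i},\mathbf{j},n,H}$ itself.
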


\begin{proof} 
By combining Lemma~\ref{ExponentialA1b} and \cite[Lemma~6.26]{Hochman2022}, then for every $(\mathbf{i},\mathbf{j})\in\mathcal{B}^{(n)}_4$  with $n\geq1$ the set
$$
E_{\epsilon,H}^{(\mathbf{i},\mathbf{j})}=\bigg\{\underline{\lambda}\in[\epsilon,1-\epsilon]^L: |\Delta_{\mathbf{i},\mathbf{j}}(\underline{\lambda})|< \frac{1}{H^n}\bigg\}
$$
can be covered by $C'(C\epsilon^{3n}H^n)^{L-1}$-many balls of radius $H^{-n}C^{-1}\epsilon^{-3n}$ for $\epsilon^{-3}<H$, where $C'>0$ an universal constant. 

By \cite[Proposition 6.28]{Hochman2022}, Lemma~\ref{Trans3} and Lemma~\ref{nonzeroA0}, we get that for every $(\mathbf{i},\mathbf{j})\in\mathcal{B}^{(n)}_1\cup\mathcal{B}^{(n)}_3$, the set $E_{\epsilon,H}^{(\mathbf{i},\mathbf{j})}$ can be covered by $C'H^{n(L-1)/2^p}$-many balls of radius $\tilde{C}^{-1}H^{-n/2^p}$, for sufficiently large $H$. Choose $H$ so large that, $H^{-n}C^{-1}\epsilon^{-3n}<\tilde{C}^{-1}H^{-n/2^p}$ for every $n\in\mathbb{N}$. 
Let
$$
E_{\epsilon,H}=\bigcap_{\tilde{N}\geq1}\bigcup_{n\geq \tilde{N}}\bigcup_{(\mathbf{i},\mathbf{j})\in\mathcal{B}^{(n)}}\bigg\{\underline{\lambda}\in[\epsilon,1-\epsilon]^L: |\Delta_{\mathbf{i},\mathbf{j}}(\underline{\lambda})|< \frac{1}{H^n}\bigg\}.
$$
Thus, we have for every $\tilde{N}\geq1$
\begin{align*}
  \mathcal{H}_{\tilde{C}^{-1}H^{-\tilde{N}/2^p}}^{s}(E_{\epsilon,H})&\leq \sum_{n\geq \tilde{N}} L^{2n}C'(C\epsilon^{3n}H^n)^{L-1} \bigg(H^{-n}C^{-1}\epsilon^{-3n}\bigg)^s+L^{2n}C'H^{n(L-1)/2^p}\bigg(\tilde{C}^{-1}H^{-n/2^p}\bigg)^s\\
  &=C'C^{L-1-s}\sum_{n\geq \tilde{N}} \left(L^{2}(\epsilon^{3}H)^{L-1-s}\right)^{n}+C'\tilde{C}^{-s}\sum_{n\geq \tilde{N}}\left(L^2H^{\frac{L-1-s}{2^p}}\right)^n,
\end{align*}
which is finite if $s>\max\{\frac{2\log{L}}{\log(H\epsilon^3)}+L-1,\frac{2^{p+1}\log{L}}{\log(H)}+L-1\}$. This implies that $\dim_{H}(E_{\epsilon,H})\leq \max\{\frac{2\log{L}}{\log(H\epsilon^3)},\frac{2^{p+1}\log{L}}{\log(H)}\}+L-1.$ Since $E_{\epsilon}\subseteq \bigcap_{H\geq 1}E_{\epsilon,H}$, we have $\dim_{H}E_{\epsilon}\leq L-1$. This completes the proof.
\end{proof}

We define a set as follows:
\begin{equation}\label{eq:G1}
G_\epsilon=\bigcup_{n=1}^\infty\bigcup_{m=1}^\infty\bigcup_{\substack{(\mathbf{i},\mathbf{j})\in\Sigma_n\times\Sigma_m\\ \mathbf{i}\cap\mathbf{j}=\emptyset}}\{\underline{\lambda}\in[\epsilon,1-\epsilon]^L:\lambda_{\mathbf{i}}=\lambda_{\mathbf{j}}\}\text{ and }G=\bigcup_{n=1}^\infty G_{1/n}.
\end{equation}

\begin{lemma}\label{Descrofmoreblock}
  Let $\varepsilon>0$ be arbitrary and let $G_\epsilon$ be as above. Then $\dim_HG_\epsilon\leq L-1$. Moreover, for every $\underline{\lambda}\in[\epsilon,1-\epsilon]^L\setminus G_\epsilon$ and $(\mathbf{i},\mathbf{j})\in \Sigma^*\times\Sigma^*$, 
  $$\text{either }\#_{i,j}(\mathbf{i})=\#_{i,j}(\mathbf{j}) ~\forall~(i,j)\in I\text{ or } \lambda_{\mathbf{i}}\neq\lambda_{\mathbf{j}}.$$  
\end{lemma}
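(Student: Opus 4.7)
The plan is to handle the two assertions separately. For the dimension bound, I would fix $n,m\geq 1$ and a disjoint pair $(\mathbf{i},\mathbf{j})\in\Sigma_n\times\Sigma_m$, and analyse the slice $S_{\mathbf{i},\mathbf{j}}=\{\underline{\lambda}\in[\epsilon,1-\epsilon]^L:\lambda_{\mathbf{i}}=\lambda_{\mathbf{j}}\}$. Setting $c_{i,j}=\#_{i,j}\mathbf{i}-\#_{i,j}\mathbf{j}\in\mathbb{Z}$, the defining condition becomes $\sum_{(i,j)\in I}c_{i,j}\log\lambda_{i,j}=0$. Since $\mathbf{i},\mathbf{j}$ are non-empty and disjoint, every symbol appearing in $\mathbf{i}$ is absent from $\mathbf{j}$, so the integer vector $(c_{i,j})$ is non-zero. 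The map $\underline{\lambda}\mapsto(\log\lambda_{i,j})_{(i,j)\in I}$ is a bi-Lipschitz diffeomorphism on $[\epsilon,1-\epsilon]^L$, and it sends $S_{\mathbf{i},\mathbf{j}}$ into the intersection of its image with a genuine hyperplane, so $\dim_H S_{\mathbf{i},\mathbf{j}}\leq L-1$. Since $G_\epsilon$ is a countable union of such slices, countable stability of Hausdorff dimension yields $\dim_H G_\epsilon\leq L-1$.

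For the moreover part, I would argue by contrapositive: assume $\lambda_{\mathbf{i}}=\lambda_{\mathbf{j}}$ for some $\mathbf{i},\mathbf{j}\in\Sigma^*$ with $\#_{i_0,j_0}\mathbf{i}\neq\#_{i_0,j_0}\mathbf{j}$ for at least one $(i_0,j_0)\in I$, and produce disjoint words witnessing $\underline{\lambda}\in G_\epsilon$. With $c_{i,j}=\#_{i,j}\mathbf{i}-\#_{i,j}\mathbf{j}$, decompose $c_{i,j}=c^+_{i,j}-c^-_{i,j}$ into positive and negative parts; by construction $\min(c^+_{i,j},c^-_{i,j})=0$ for every $(i,j)$. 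Let $\mathbf{i}'$ be any word with $\#_{i,j}\mathbf{i}'=c^+_{i,j}$ and $\mathbf{j}'$ any word with $\#_{i,j}\mathbf{j}'=c^-_{i,j}$. Then $\mathbf{i}'\cap\mathbf{j}'=\emptyset$ by construction, and $\lambda_{\mathbf{i}'}/\lambda_{\mathbf{j}'}=\prod_{(i,j)}\lambda_{i,j}^{c_{i,j}}=\lambda_{\mathbf{i}}/\lambda_{\mathbf{j}}=1$.

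The one point that needs care—and the only genuine obstacle—is showing that \emph{both} $\mathbf{i}'$ and $\mathbf{j}'$ are non-empty, since $G_\epsilon$ is indexed by $n,m\geq 1$. If, say, $c^-_{i,j}=0$ for every $(i,j)$, then $c_{i,j}\geq 0$ throughout, and the identity $\sum c^+_{i,j}\log\lambda_{i,j}=\sum c^-_{i,j}\log\lambda_{i,j}=0$ combined with $\log\lambda_{i,j}<0$ (which uses $\underline{\lambda}\in[\epsilon,1-\epsilon]^L$) forces $c^+_{i,j}=0$ for every $(i,j)$, contradicting the hypothesis $c_{i_0,j_0}\neq 0$. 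The symmetric argument shows $\mathbf{i}'$ is non-empty. Hence $(\mathbf{i}',\mathbf{j}')$ is an admissible pair in the union defining $G_\epsilon$, and $\underline{\lambda}\in G_\epsilon$, completing the contrapositive.
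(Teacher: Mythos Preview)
Your proof is correct and follows essentially the same approach as the paper: each constraint set is a smooth hypersurface (the paper asserts this directly, while you pass to $\log$-coordinates to see it as a genuine hyperplane), and the moreover part is handled by cancelling the common symbol multiplicities to produce a disjoint pair $(\mathbf{i}',\mathbf{j}')$. Your verification that both $\mathbf{i}'$ and $\mathbf{j}'$ are non-empty is in fact more careful than the paper's version, which leaves this point implicit.
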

\begin{proof}
 It is clear that for every $n,m\in\N$ and $\mathbf{i}\in\Sigma_n,\mathbf{j}\in\Sigma_m$ with $\mathbf{i}\cap\mathbf{j}=\emptyset$, the set $\{\underline{\lambda}\in[\epsilon,1-\epsilon]^L:\lambda_{\mathbf{i}}=\lambda_{\mathbf{j}}\}$ forms a smooth $(L-1)$-dimensional manifold, and so, $\dim_HG_{\epsilon}\leq L-1$.

 On the other hand, for every $(\mathbf{i},\mathbf{j})\in \Sigma^*\times\Sigma^*$ there are finite words $\mathbf{h},\mathbf{i}',\mathbf{j}'\in\Sigma^*$ such that 
 $$
 \#_{i,j}\mathbf{h}=\min\{\#_{i,j}\mathbf{i},\#_{i,j}\mathbf{j}\},~\#_{i,j}\mathbf{i}'=\#_{i,j}\mathbf{i}-\#_{i,j}\mathbf{h}\text{ and }\#_{i,j}\mathbf{j}'=\#_{i,j}\mathbf{j}-\#_{i,j}\mathbf{h},
 $$
 moreover,  $\lambda_{\mathbf{i}}=\lambda_{\mathbf{h}}\lambda_{\mathbf{i}'}$, $\lambda_{\mathbf{j}}=\lambda_{\mathbf{h}}\lambda_{\mathbf{j}'}$. Clearly, $\mathbf{i}'\cap\mathbf{j}'=\emptyset$ and $\lambda_{\mathbf{i}}-\lambda_{\mathbf{j}}=\lambda_{\mathbf{h}}\left(\lambda_{\mathbf{i}'}-\lambda_{\mathbf{j}'}\right)\neq 0$ by $\underline{\lambda}\in[\epsilon,1-\epsilon]^L\setminus G_\epsilon$. 
\end{proof}

\begin{proposition}\label{Exceptionfor2fixedpoint}
 There exists a set $\mathbf{E}\subset (0,1)^L$ such that $\dim_{H}\mathbf{E}\leq L-1$ and the IFS $\mathcal{J}$ defined satisfies ESC for CFS for every parameters $\underline{\lambda}\in (0, 1)^{L}\setminus\mathbf{E}$. 
\end{proposition}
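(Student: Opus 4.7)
My plan is to take $\mathbf{E}:=\bigcup_{n=1}^{\infty}(E_{1/n}\cup G_{1/n})$. By the countable stability of Hausdorff dimension together with Proposition~\ref{ExponentialE1} and Lemma~\ref{Descrofmoreblock}, $\dim_{H}\mathbf{E}\leq L-1$. Fix $\underline{\lambda}\in(0,1)^{L}\setminus\mathbf{E}$; then there is $\epsilon>0$ with $\underline{\lambda}\in[\epsilon,1-\epsilon]^{L}\setminus(E_{\epsilon}\cup G_{\epsilon})$. From $\underline{\lambda}\notin E_{\epsilon}$ I obtain $H>1$ and $\tilde{N}\in\mathbb{N}$ such that $|\Delta_{\mathbf{i},\mathbf{j}}(\underline{\lambda})|\geq H^{-n}$ for every $n\geq\tilde{N}$ and every $(\mathbf{i},\mathbf{j})\in\mathcal{B}^{(n)}$, and from $\underline{\lambda}\notin G_{\epsilon}$ together with Lemma~\ref{Descrofmoreblock}, any two finite words of equal $\lambda$-product have equal symbol-count vectors. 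The goal is then to derive ESC for CFS with any $b>\log_{2}(H/\epsilon)+1$ and $N$ sufficiently large.

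Given $\mathbf{i},\mathbf{j}\in\Sigma_{n}$ with $\lambda_{\mathbf{i}}=\lambda_{\mathbf{j}}$ that do \emph{not} share the same block structure, I let $k$ be the smallest index at which their block decompositions differ. For $m<k$ the blocks $b_{m}^{\mathbf{i}}$ and $b_{m}^{\mathbf{j}}$ have matching fixed points and matching symbol counts, so commutativity of maps within a single block gives $f_{b_{1}^{\mathbf{i}}\cdots b_{k-1}^{\mathbf{i}}}=f_{b_{1}^{\mathbf{j}}\cdots b_{k-1}^{\mathbf{j}}}$, a similarity of ratio $\alpha\geq\epsilon^{l}$ where $l$ is the common prefix length. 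Factoring this prefix off gives $\Pi(\mathbf{i})-\Pi(\mathbf{j})=\alpha(\Pi(\mathbf{i}^{*})-\Pi(\mathbf{j}^{*}))$. Using commutativity again, I strip out the common multi-subset $\mathbf{h}$ of the first differing blocks, writing $b_{k}^{\mathbf{i}}=\mathbf{h}\mathbf{b}'_{\mathbf{i}}$ and $b_{k}^{\mathbf{j}}=\mathbf{h}\mathbf{b}'_{\mathbf{j}}$ with $\mathbf{b}'_{\mathbf{i}}\cap\mathbf{b}'_{\mathbf{j}}=\emptyset$ (necessarily $\mathbf{h}=\emptyset$ if $t_{b_{k}^{\mathbf{i}}}\ne t_{b_{k}^{\mathbf{j}}}$). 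Pushing $f_{\mathbf{h}}$ outside then yields
\[
\Pi(\mathbf{i}^{*})-\Pi(\mathbf{j}^{*})=\lambda_{\mathbf{h}}\bigl(\Pi(\tilde{\mathbf{i}})-\Pi(\tilde{\mathbf{j}})\bigr),
\]
where $\tilde{\mathbf{i}}=\mathbf{b}'_{\mathbf{i}}b_{k+1}^{\mathbf{i}}\cdots$ and $\tilde{\mathbf{j}}=\mathbf{b}'_{\mathbf{j}}b_{k+1}^{\mathbf{j}}\cdots$ are finite words of equal length $n''\leq n$, of equal $\lambda$-product, and with disjoint first blocks $b_{1}^{\tilde{\mathbf{i}}}\cap b_{1}^{\tilde{\mathbf{j}}}=\emptyset$.

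In the principal case both $\tilde{\mathbf{i}}$ and $\tilde{\mathbf{j}}$ contain more than one block, so $(\tilde{\mathbf{i}},\tilde{\mathbf{j}})\in\mathcal{B}^{(n'')}$ and the $E_{\epsilon}$ estimate gives $|\Pi(\tilde{\mathbf{i}})-\Pi(\tilde{\mathbf{j}})|\geq H^{-n''}\geq H^{-n}$. Combining,
\[
|\Pi(\mathbf{i})-\Pi(\mathbf{j})|=\alpha\lambda_{\mathbf{h}}\bigl|\Pi(\tilde{\mathbf{i}})-\Pi(\tilde{\mathbf{j}})\bigr|\geq\epsilon^{l+|\mathbf{h}|}H^{-n}\geq(\epsilon/H)^{n}>2^{-bn}
\]
by the choice of $b$. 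The real obstacle is the degenerate case in which at least one of $\tilde{\mathbf{i}},\tilde{\mathbf{j}}$ is itself a single block, so the condition $b_{1}\ne$\,word defining $\mathcal{B}^{(n'')}$ fails and $E_{\epsilon}$ is not directly available. I handle this through the explicit identity $\Pi(\mathbf{w})=t_{a}(1-\lambda_{\mathbf{w}})$ valid for any single-block word $\mathbf{w}$ with common fixed point $t_{a}$. If both $\tilde{\mathbf{i}},\tilde{\mathbf{j}}$ are single blocks with the same fixed point, their disjointness combined with $\lambda_{\tilde{\mathbf{i}}}=\lambda_{\tilde{\mathbf{j}}}$ contradicts $\underline{\lambda}\notin G_{\epsilon}$; if they are single blocks with different fixed points, $|\Pi(\tilde{\mathbf{i}})-\Pi(\tilde{\mathbf{j}})|=|t_{b_{1}^{\tilde{\mathbf{i}}}}-t_{b_{1}^{\tilde{\mathbf{j}}}}|(1-\lambda_{\tilde{\mathbf{i}}})\geq\epsilon$; and if exactly one is a single block, expanding $\Pi$ on the multi-block side and using $\lambda_{\tilde{\mathbf{i}}}=\lambda_{\tilde{\mathbf{j}}}$ produces a lower bound of order $\lambda_{b_{1}^{\mathrm{multi}}}\cdot\epsilon\geq\epsilon^{n''+1}$. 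In every subcase $|\Pi(\mathbf{i})-\Pi(\mathbf{j})|\geq\epsilon^{n+O(1)}>2^{-bn}$ once $n$ is large enough, which completes the verification of ESC for CFS.
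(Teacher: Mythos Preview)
Your proof follows the same strategy as the paper's: the same exceptional set $\mathbf{E}=\bigcup_n(E_{1/n}\cup G_{1/n})$, the same stripping of the maximal common prefix to reduce to a pair $(\tilde{\mathbf{i}},\tilde{\mathbf{j}})$ with disjoint first blocks, and the same invocation of the $E_\epsilon$-bound in the principal case. So the core argument is correct and matches the paper.

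The one place where you and the paper diverge is the handling of the degenerate case where $\tilde{\mathbf{i}}$ or $\tilde{\mathbf{j}}$ is a single block. You attempt explicit lower bounds for $|\Pi(\tilde{\mathbf{i}})-\Pi(\tilde{\mathbf{j}})|$ in three subcases; your ``both single, same fixed point'' subcase correctly derives a contradiction with $G_\epsilon$, but the ``both single, different fixed points'' and ``one single, one multi'' subcases are handled by ad hoc estimates, the latter of which (``a lower bound of order $\lambda_{b_1^{\mathrm{multi}}}\cdot\epsilon$'') is too vague to count as a proof as written. The paper's treatment is both cleaner and avoids this gap entirely: since $\underline{\lambda}\notin G_\epsilon$ and $\lambda_{\tilde{\mathbf{i}}}=\lambda_{\tilde{\mathbf{j}}}$, Lemma~\ref{Descrofmoreblock} gives $\#_{i,j}\tilde{\mathbf{i}}=\#_{i,j}\tilde{\mathbf{j}}$ for every $(i,j)\in I$. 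If $\tilde{\mathbf{i}}$ were a single block, all its symbols share one fixed point, hence so would $\tilde{\mathbf{j}}$, making $\tilde{\mathbf{j}}$ a single block with the same fixed point; but then the equal symbol counts together with $b_1^{\tilde{\mathbf{i}}}\cap b_1^{\tilde{\mathbf{j}}}=\emptyset$ force both to be empty, a contradiction. Thus \emph{all} degenerate subcases are vacuous, and only the principal case needs to be argued. You should replace your three degenerate subcases by this single observation.
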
	
\begin{proof}
We define the set $\mathbf{E}=G\cup\bigcup_{n=1}^\infty E_{1/n}$, where $E_{1/n}$ is defined in \eqref{eq:E1} and $G$ is defined in \eqref{eq:G1}. By Proposition~\ref{ExponentialE1} and Lemma~\ref{Descrofmoreblock}, $\dim_H\mathbf{E}\leq L-1$. Now, we show that the IFS $\mathcal{J}$ satisfies ESC for CFS for every parameter $\underline{\lambda}\in (0,1)^L\setminus\mathbf{E}$. 

Let $\epsilon>0$ and $\underline{\lambda}\in [\epsilon, 1-\epsilon]^{L}\setminus\mathbf{E}$ be arbitrary but fixed. Let $\mathbf{i},\mathbf{j}\in\Sigma_n$ be such that $\lambda_{\mathbf{i}}=\lambda_{\mathbf{j}}$, but $\mathbf{i}$ and $\mathbf{j}$ have different block structure. Since $\mathbf{i}$ and $\mathbf{j}$ have different block structure, there exists $\mathbf{h}, \mathbf{i}', \mathbf{j}'\in\Sigma^*$ such that 
$$
f_{\mathbf{i}}\equiv f_{\mathbf{h}}\circ f_{\mathbf{i}'},\ f_{\mathbf{j}}\equiv f_{\mathbf{h}}\circ f_{\mathbf{j}'}\text{ and }b_1^{\mathbf{i}'}\cap b_1^{\mathbf{j}'}=\emptyset.
$$
Thus, $\lambda_{\mathbf{i}'}=\lambda_{\mathbf{j}'}$. 

If $b_1^{\mathbf{i}'}=\mathbf{i}'$ and $b_1^{\mathbf{j}'}=\mathbf{j}'$, then $\lambda_{\mathbf{i}'}\neq\lambda_{\mathbf{j}'}$ by $\underline{\lambda}\notin G$, which is a contradiction. If $b_1^{\mathbf{i}'}=\mathbf{i}'$ and $b_1^{\mathbf{j}'}\neq\mathbf{j}'$, then there exists $i\in\{1,2\}$ such that $\#_{i,j}\mathbf{j}'\geq1>0=\#_{i,j}\mathbf{i}'$ for some $j\in\{1,\ldots,n_i\}$, and hence, again $\lambda_{\mathbf{i}'}\neq\lambda_{\mathbf{j}'}$ by $\underline{\lambda}\notin G$, which is again a contradiction.

Hence, $b_1^{\mathbf{i}'}\neq\mathbf{i}'$ and $b_1^{\mathbf{j}'}\neq\mathbf{j}'$. Since $\underline{\lambda}\notin E_\epsilon$, there exists $b>1$ independent of $n\in\N$ and $\mathbf{i},\mathbf{j}\in\Sigma_n$ such that
$$
|\Delta_{\mathbf{i}',\mathbf{j}'}(\underline{\lambda})|>2^{-b|\mathbf{i}'|}\geq 2^{-bn}.
$$
Hence, 
$$
|\Delta_{\mathbf{i},\mathbf{j}}(\underline{\lambda})|=\lambda_{\mathbf{h}}|\Delta_{\mathbf{i}',\mathbf{j}'}(\underline{\lambda})|>\epsilon^{n}2^{-bn},
$$
which had to be proven.
\end{proof}	

\subsection{General fixed point system}\label{Genaralcaseextend}
Now, let us turn our attention to the IFS 
$$
\mathcal{S}_{\underline{\lambda},\underline{t}}=\{f_{i,j}(x)=\lambda_{i,j}x+t_i(1-\lambda_{i,j}):i\in\{1,\ldots,N\},\ j\in\{1,\ldots,n_i\}\}
$$
defined in \eqref{eq:goalIFS}, and denote its natural projection by $\Pi$, as usual. For every $i\in\{1,\ldots,N\}$, let us define the IFS
$$\mathcal{S}_{i}=\{\lambda_{i,j}x+(1-\lambda_{i,j})\}_{j=1}^{n_i}\bigcup \{\lambda_{k,j}x:k\in\{1,\ldots,N\}\setminus\{i\}, j\in\{1,\ldots,n_k\}\},$$
and let $\Pi_{i}$ be the natural projection for the IFS $\mathcal{S}_{i}$. Then clearly,
\begin{align}\label{connectionformula}
\Pi(\mathbf{i})=\sum_{i=1}^Nt_i\Pi_i(\mathbf{i}).  
\end{align}
Let us introduce the following vector:
$$\hat{\Pi}(\mathbf{i})=(\Pi_{1}(\mathbf{i}),\dots,\Pi_{N}(\mathbf{i})).$$
Then by writing $\underline{t}=(t_1,\ldots,t_N)$, we get $\Pi(\mathbf{i})=\langle\underline{t},\hat{\Pi}(\mathbf{i})\rangle$, where $\langle\cdot,\cdot\rangle$ denotes the usual scalar product on $\R^N$ and let $\|\cdot\|$ be the usual Euclidean distance. Recall the notation $L=\sum_{i=1}^Nn_i$.

By Proposition \ref{Exceptionfor2fixedpoint}, for every $i=1,\ldots,N$ there exists a set $\mathbf{E}_{i}\subset (0,1)^L$ such that $\dim_{H}\mathbf{E}_{i}\leq L-1$ such that the IFS $\mathcal{S}_{i}$ satisfies ESC for CFS for all parameters $\underline{\lambda}\in (0,1)^L\setminus\mathbf{E}_{i}$.
We define a set by 
\begin{equation}\label{eq:E}
\mathbf{E}=\bigcup_{i=1}^{N}\mathbf{E}_{i}.
\end{equation}
Clearly, $\dim_{H}(\mathbf{E})\leq L-1$. Let $\Sigma_{n}$ be the set of all finite sequences of length $n$ with symbols from the set $I.$

\begin{lemma}\label{Desforgeneralsystem}
    Let $\underline{\lambda}\in (0,1)^L\setminus\mathbf{E}$. Then there exists $b>0$ such that for every $n\in\N$ and every $\mathbf{i},\mathbf{j}\in \Sigma_{n}$ with $\lambda_{\mathbf{i}}=\lambda_{\mathbf{j}}$, we have 
    $$\text{ either }\mathbf{i}~\text{and}~\mathbf{j}~\text{have the same block structure with respect to $\mathcal{S}$ or}~\|\hat{\Pi}(\mathbf{i})-\hat{\Pi}(\mathbf{j})\|>2^{-bn}.$$    
\end{lemma}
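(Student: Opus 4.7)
The plan is to bound $\|\hat{\Pi}(\mathbf{i})-\hat{\Pi}(\mathbf{j})\|$ from below by a single component $|\Pi_i(\mathbf{i})-\Pi_i(\mathbf{j})|$ for a suitably chosen $i$, and then to invoke the ESC for CFS for the two-fixed-point IFS $\mathcal{S}_i$ granted by Proposition~\ref{Exceptionfor2fixedpoint} (valid since $\underline{\lambda}\notin\mathbf{E}_i$ for every $i$). A key observation that makes this work is that $\mathcal{S}_i$ uses exactly the same contraction ratios $\lambda_{k,l}$ as $\mathcal{S}$ (only the translation parts are normalized to fixed points $0$ and $1$), so the hypothesis $\lambda_{\mathbf{i}}=\lambda_{\mathbf{j}}$ passes verbatim to $\mathcal{S}_i$.

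The central combinatorial input is the following claim: \emph{if $\mathbf{i},\mathbf{j}\in\Sigma^*$ have different block structure with respect to $\mathcal{S}$, then there exists $i\in\{1,\dots,N\}$ for which they have different block structure with respect to $\mathcal{S}_i$.} I would prove this by contraposition. Assume $\mathbf{i},\mathbf{j}$ share the same block structure w.r.t.\ every $\mathcal{S}_i$. Since blocks of $\mathcal{S}_i$ are maximal runs of positions whose first index either equals $i$ or differs from $i$, agreement of block types and block lengths in $\mathcal{S}_i$ forces the binary indicator sequence ``first index of the $k$-th symbol equals $i$'' to coincide pointwise in $\mathbf{i}$ and $\mathbf{j}$. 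Intersecting this information over all $i\in\{1,\dots,N\}$ recovers the full first-index sequence position by position, so $\mathbf{i}$ and $\mathbf{j}$ have identical first-index sequences. Consequently they share the same $\mathcal{S}$-block partition (the maximal runs of constant first index), and within each such $\mathcal{S}$-block the counts of $(i,1),\dots,(i,n_i)$ must also agree, since these counts are precisely those recorded by the corresponding ``type $1$'' block of $\mathcal{S}_i$, which match by hypothesis. Hence $\mathbf{i}$ and $\mathbf{j}$ have identical block structure with respect to $\mathcal{S}$, which is the desired contrapositive.

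With this reduction in hand, let $b_i>1$ and $N_i\in\mathbb{N}$ be constants witnessing ESC for CFS for $\mathcal{S}_i$, and set $b:=\max_{1\leq i\leq N}b_i$. For every $n\geq\max_i N_i$ and every pair $\mathbf{i},\mathbf{j}\in\Sigma_n$ with $\lambda_{\mathbf{i}}=\lambda_{\mathbf{j}}$ and different block structure w.r.t.\ $\mathcal{S}$, choose $i$ for which the $\mathcal{S}_i$-block structures differ; then
\begin{equation*}
\|\hat{\Pi}(\mathbf{i})-\hat{\Pi}(\mathbf{j})\|\geq |\Pi_i(\mathbf{i})-\Pi_i(\mathbf{j})|>2^{-b_i n}\geq 2^{-bn}.
\end{equation*}
The finitely many small-length cases $n<\max_i N_i$ are handled by a separate finiteness argument: each $|\Pi_i(\mathbf{i})-\Pi_i(\mathbf{j})|$ must be strictly positive (otherwise, extending $\mathbf{i},\mathbf{j}$ by a common suffix of arbitrary length would contradict ESC for CFS for $\mathcal{S}_i$ at that longer length), so the minimum over the finitely many such pairs is a positive constant, and $b$ can be enlarged to absorb these cases. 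The main obstacle in the proof is really the combinatorial claim above; once it is isolated, the analytic content is inherited verbatim from Proposition~\ref{Exceptionfor2fixedpoint}.
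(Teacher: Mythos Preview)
Your proof is correct and follows essentially the same approach as the paper: reduce to a single coordinate $\Pi_i$ and invoke ESC for CFS for $\mathcal{S}_i$. The only notable difference is in how the combinatorial reduction is argued. The paper does not prove your contrapositive claim in full generality; instead it makes an explicit choice of $i$: given $\mathbf{i},\mathbf{j}$ with different $\mathcal{S}$-block structure, it sets
\[
l=\min\{k\geq1:\exists(i,j)\in I\text{ with }\#_{i,j}b_k^{\mathbf{i}}\neq\#_{i,j}b_k^{\mathbf{j}}\}
\]
and takes $i$ to be the first index of $b_l^{\mathbf{i}}$, asserting (without details) that the $\mathcal{S}_i$-block structures then differ. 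Your contraposition argument is more systematic and makes this step fully explicit, at the cost of being slightly less direct. On the small-$n$ issue you are actually more careful than the paper, which simply writes ``for every $\mathbf{i}$ and $\mathbf{j}$'' and tacitly absorbs the finitely many short cases; your extension-by-a-common-suffix argument is the right way to close this, and it goes through once one observes that the suffix can be chosen so that the first differing block is not the last one (or, when it is, that the type mismatch at that block persists after extension).
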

\begin{proof}
Since $\underline{\lambda}\in (0,1)^L\setminus\mathbf{E}$, for every $i\in\{1,\ldots,N\}$ there exists $b_i=b_i(\underline{\lambda})>0$ such that for every $\mathbf{i}$ and $\mathbf{j}$ with $\lambda_{\mathbf{i}}=\lambda_{\mathbf{j}}$ but with different block structure with respect to $\mathcal{S}_i$, we have $|\Pi_i(\mathbf{i})-\Pi_i(\mathbf{j})|>2^{-b_in}$ by Proposition \ref{Exceptionfor2fixedpoint}. Let $b:=\max\{b_1,\ldots,b_N\}$.

    Let $\mathbf{i},\mathbf{j}\in \Sigma_{n}$ be such that $\lambda_{\mathbf{i}}=\lambda_{\mathbf{j}}$ and $\mathbf{i}~\text{and}~\mathbf{j}$ does not have the same block structure with respect to $\mathcal{S}$. Let $l\geq1$ be the smallest $l$ such that the blocks $b_l^{\mathbf{i}}$ and $b_l^{\mathbf{j}}$ differ significantly. That is,
     $$
     l=\min\{k\geq1:~\exists(i,j)\in I\text{ s. t. }\#_{i,j}b_k^{\mathbf{i}}\neq\#_{i,j}b_k^{\mathbf{j}}\}.
     $$
     Let $i$ be such that $t_{b_l^{\mathbf{i}}}=t_i$. Then $\mathbf{i},\mathbf{j}\in \Sigma_{n}$ have different block structure with respect to the IFS $\mathcal{S}_i$, and so, $\|\hat{\Pi}(\mathbf{i})-\hat{\Pi}(\mathbf{j})\|\geq|\Pi_i(\mathbf{i})-\Pi_i(\mathbf{j})|>2^{-bn}$.
\end{proof}

\begin{proposition}\label{ESC for gerenalsystem}
Let $\mathbf{E}\subset (0,1)^L$ as in \eqref{eq:E}. Then for every $\underline{\lambda}\in(0,1)^L\setminus\mathbf{E}$ there exists $\mathbf{F}\subset \R^N$ such that $\dim_{H}\mathbf{F}\leq N-1$ and the IFS $\mathcal{S}$ satisfies ESC for CFS for all parameters $\underline{\mathbf{t}}\in \R^N\setminus \mathbf{F}.$     
\end{proposition}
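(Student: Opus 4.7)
The plan is to leverage Lemma~\ref{Desforgeneralsystem}: for any fixed $\underline{\lambda}\in(0,1)^L\setminus\mathbf{E}$, there exists $b=b(\underline{\lambda})>0$ such that whenever $\mathbf{i},\mathbf{j}\in\Sigma_n$ satisfy $\lambda_{\mathbf{i}}=\lambda_{\mathbf{j}}$ but have different block structure, the vector $v_{\mathbf{i},\mathbf{j}}:=\hat{\Pi}(\mathbf{i})-\hat{\Pi}(\mathbf{j})\in\mathbb{R}^N$ satisfies $\|v_{\mathbf{i},\mathbf{j}}\|>2^{-bn}$. Since $\Pi(\mathbf{i})-\Pi(\mathbf{j})=\langle\underline{t},v_{\mathbf{i},\mathbf{j}}\rangle$, ESC for CFS becomes a quantitative non-orthogonality statement for $\underline{t}$ against the (lower-bounded) vectors $v_{\mathbf{i},\mathbf{j}}$, and the exceptional set is a union of thin hyperplane slabs.

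For each $b'>b$, I would define
$$\mathbf{F}_{b'}:=\bigcap_{\tilde{N}\geq1}\bigcup_{n\geq\tilde{N}}\bigcup_{(\mathbf{i},\mathbf{j})}\bigl\{\underline{t}\in\mathbb{R}^N:|\langle\underline{t},v_{\mathbf{i},\mathbf{j}}\rangle|<2^{-b'n}\bigr\},$$
where the innermost union ranges over pairs in $\Sigma_n\times\Sigma_n$ with $\lambda_{\mathbf{i}}=\lambda_{\mathbf{j}}$ and different block structure. By construction, $\underline{t}\notin\mathbf{F}_{b'}$ immediately yields ESC for CFS with exponent $b'$. For each bad pair, the slab $\{|\langle\underline{t},v_{\mathbf{i},\mathbf{j}}\rangle|<2^{-b'n}\}$ has thickness at most $2\cdot2^{-b'n}/\|v_{\mathbf{i},\mathbf{j}}\|\leq 2\cdot2^{-(b'-b)n}$, so its intersection with $[-R,R]^N$ can be covered by $\lesssim_R (2^{(b'-b)n})^{N-1}$ Euclidean balls of radius $\rho_n:=2\cdot 2^{-(b'-b)n}$. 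Since there are at most $L^{2n}$ bad pairs at level $n$, the $s$-dimensional Hausdorff content satisfies
$$\mathcal{H}^s_{\rho_{\tilde{N}}}(\mathbf{F}_{b'}\cap[-R,R]^N)\lesssim_R\sum_{n\geq\tilde{N}}L^{2n}\cdot 2^{(b'-b)n(N-1)}\cdot 2^{-(b'-b)ns}=\sum_{n\geq\tilde{N}}\bigl(L^2\cdot 2^{(b'-b)(N-1-s)}\bigr)^{n},$$
which is finite whenever $s>N-1+\frac{2\log L}{(b'-b)\log 2}$. Letting $R\to\infty$ along $\mathbb{N}$, this gives $\dim_H\mathbf{F}_{b'}\leq N-1+\frac{2\log L}{(b'-b)\log 2}$.

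Finally I set $\mathbf{F}:=\bigcap_{k=1}^{\infty}\mathbf{F}_{b+k}$. Since the family $\{\mathbf{F}_{b'}\}_{b'>b}$ is nested decreasing in $b'$, we get $\dim_H\mathbf{F}\leq\inf_{k}\dim_H\mathbf{F}_{b+k}=N-1$. On the other hand, if $\underline{t}\notin\mathbf{F}$, then there is some $k$ with $\underline{t}\notin\mathbf{F}_{b+k}$, which provides a threshold $\tilde{N}$ beyond which every $n$ and every bad pair satisfies $|\Pi(\mathbf{i})-\Pi(\mathbf{j})|\geq 2^{-(b+k)n}$, establishing ESC for CFS with exponent $b+k$.

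The main obstacle is getting the slab-covering geometry to yield a bound that \emph{converges} to $N-1$ as the exponent $b'$ grows, and this hinges crucially on the lower bound $\|v_{\mathbf{i},\mathbf{j}}\|>2^{-bn}$ from Lemma~\ref{Desforgeneralsystem}; without this bound the slab thickness could not be controlled. A mild technical point is the passage from a compact box $[-R,R]^N$ to all of $\mathbb{R}^N$, handled by a countable union, and the fact that ESC for CFS needs only \emph{some} exponent to work, which is what makes the intersection over $k$ produce a set of dimension $N-1$ rather than $N-1+\varepsilon$.
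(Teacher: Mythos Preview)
Your proof is correct and follows essentially the same route as the paper's: both use Lemma~\ref{Desforgeneralsystem} to get the lower bound $\|\hat{\Pi}(\mathbf{i})-\hat{\Pi}(\mathbf{j})\|>2^{-bn}$, cover the bad set of $\underline{t}$ by thin hyperplane slabs, count at most $L^{2n}$ slabs at level $n$, and then intersect over an increasing family of exponents to push the dimension bound down to $N-1$. The only cosmetic differences are that the paper phrases the slab condition via the angle $|\cos\theta(\underline{t},\mathbf{i},\mathbf{j})|$ and localizes to annuli $1/m\leq\|\underline{t}\|\leq M$ (hence the extra union over $m,M$), whereas you work directly with the inner product and localize to cubes $[-R,R]^N$; your formulation is slightly cleaner since $|\Pi(\mathbf{i})-\Pi(\mathbf{j})|=|\langle\underline{t},v_{\mathbf{i},\mathbf{j}}\rangle|$ already is the quantity needed for ESC for CFS.
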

\begin{proof}
Let $\mathbf{i},\mathbf{j}\in\Sigma_n$ be such that $\mathbf{i}$ and $\mathbf{j}$ have different block structure. By Lemma~\ref{Desforgeneralsystem}, for every $\underline{\lambda}\in(0,1)^L\setminus\mathbf{E}$ there exists $b\in\N$ (independent of the choice of $\mathbf{i},\mathbf{j}$) such that $\|\hat{\Pi}(\mathbf{i})-\hat{\Pi}(\mathbf{j})\|\geq 2^{-bn}$. For simplicity, let 
$$
\mathcal{A}_n=\{(\mathbf{i},\mathbf{j})\in \Sigma_{n}\times \Sigma_{n}: \mathbf{i}~\text{and}~\mathbf{i}~ \text{have not same block structure and}~ \lambda_{\mathbf{i}}=\lambda_{\mathbf{j}}\}.
$$

For $\underline{t}\in\R^N$ and $(\mathbf{i},\mathbf{j})\in\mathcal{A}_n$, let $\theta(\underline{t},\mathbf{i},\mathbf{j})$ be the angle between the vectors $\underline{t}$ and $\hat{\Pi}(\mathbf{i})-\hat{\Pi}(\mathbf{j})$. The exceptional set for the fixed-point parameters is $\mathbf{F}=\bigcup_{m=1}^\infty\bigcup_{M=1}^\infty\bigcap_{H=2^{b}}^\infty\mathbf{F}_{m,M,H}$, where
$$
\mathbf{F}_{m,M,H}=\bigcap_{\tilde{N}=1}^{\infty}\bigcup_{n=\tilde{N}}^{\infty}\bigcup_{(\mathbf{i},\mathbf{j})\in\mathcal{A}_n}\bigg\{\underline{t}: 1/m\leq\|\underline{t}\|\leq M~\&~|\cos(\theta(\underline{t},\mathbf{i},\mathbf{j}))|\leq m H^{-n}2^{bn}\bigg\}.
$$
Clearly, for every $\underline{t}\in\R^N\setminus\mathbf{F}$ there exist $H,\tilde{N}\in\N$ such that for every $n\geq\tilde{N}$ and $(\mathbf{i},\mathbf{j})\in\mathcal{A}_n$
$$
|\Pi(\mathbf{i})-\Pi(\mathbf{j})|=\|\underline{t}\|\|\hat{\Pi}(\mathbf{i})-\hat{\Pi}(\mathbf{j})\||\cos(\theta(\underline{t},\mathbf{i},\mathbf{j}))|>\|\hat{\Pi}(\mathbf{i})-\hat{\Pi}(\mathbf{j})\|H^{-n}2^{bn}>H^{-n},
$$
which implies the claim. To finish the proof, we need to verify the dimension estimate.

For every $(\mathbf{i},\mathbf{j})\in\mathcal{A}_n$, the set $\bigg\{\underline{t}: 1/m\leq\|\underline{t}\|\leq M~\&~|\cos(\theta(\underline{t},\mathbf{i},\mathbf{j}))| \leq mH^{-n}2^{bn}\bigg\}$ can be covered by $C(H^n2^{-bn}/m)^{N-1}$-many balls of radius $ MmH^{-n}2^{bn}$, where $C>0$ is a universal constant independent of $n,m,M\in\N$, and $(\mathbf{i},\mathbf{j})\in\mathcal{A}_n$. Thus, we have for every $\tilde{N}\geq1$
\begin{align*}
    \mathcal{H}_{ MmH^{-\tilde{N}}2^{-\tilde{N}}}^{s}(\mathbf{F}_{m,M,H})&\leq \sum_{n=\tilde{N}}^{\infty}CL^{2n}(H^n2^{-bn}/m)^{N-1}\bigg(MmH^{-n}2^{bn}\bigg)^{s}\\&=\sum_{n=\tilde{N}}^{\infty}C''\left(L^2(H2^{-b})^{(N-1-s)}\right)^n<\infty
\end{align*}
provided $L^2(H2^{-b})^{N-1-s}<1$, where $C''=CM^sm^{s-N+1}$. This implies that $\dim_{H}\mathbf{F}_{m,M,H}\leq \frac{2\log L}{\log H-b\log2}+N-1$. Hence, we have $\dim_{H}\bigcap_{H=2^b}^\infty\mathbf{F}_{m,M,H}\leq N-1$, and so, $\dim_H\mathbf{F}\leq N-1$.
\end{proof}

Let us note that one could modify the definition of the exceptional set $\mathbf{F}$ slightly and show with some modification of the argument of Proposition~\ref{ESC for gerenalsystem} that \eqref{eq:ESCCFS} holds through a subsequence of integers $n\in\N$, and the dimension estimate remains valid by replacing the Hausdorff dimension with the packing dimension, following the lines of the proof of Hochman \cite{Hochman2022}.

\section{Calculation of \texorpdfstring{$h_{RW}$}{RW entropy} for general CFS}\label{sec:rw}
Recall the self-similar IFS
\begin{equation}\label{eq:goal2}
    \mathcal{S}_{\underline{\lambda},\underline{t}}=\{f_{i,j}=\lambda_{i,j}x+t_{i}(1-\lambda_{i,j}):~(i,j)\in I\}
\end{equation}
with a common fixed point structure. The next theorem will provide the formula for the Hausdorff dimension of self-similar measures corresponding to the IFS $\mathcal{S}_{\underline{\lambda},\underline{t}}$ by calculating the random walk entropy.

\begin{theorem}\label{thm:RWentropy} Let $\mathcal{S}_{\underline{\lambda},\underline{t}}$ be a self-similar IFS defined in \eqref{eq:goal2}. Suppose that $\mathcal{S}_{\underline{\lambda},\underline{t}}$ satisfies the ESC for CFS. Let $\mathbf{p}=(p_{i,j})_{(i,j)\in I}$ be a probability vector, and let $\mu$ be a self-similar measure corresponding to the probabilistic self-similar IFS $(\mathcal{S}_{\underline{\lambda},\underline{t}},\mathbf{p})$. Then, 
\begin{multline}\label{eq:RWform}
h_{RW}(\mu)=-\sum_{(i,j)\in I}p_{i,j}\log(p_{i,j})\\
+\sum_{(i,j)\in I }p_{i,j}\sum_{k=1}^{\infty} \sum_{l=0}^k\binom{k}{l}p_{i,j}^l\left(\sum_{\substack{m=1\\m\neq j}}^{n_i}p_{i,m}\right)^{k-l}\left(1-\sum_{m=1}^{n_i}p_{i,m}\right)\log\left(\dfrac{l+1}{k+1}\right).
\end{multline}
\end{theorem}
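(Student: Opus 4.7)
The plan is to identify the overlaps forced by ESC for CFS as being exactly the block-structure equivalence, express the resulting correction to the Bernoulli entropy via a clean combinatorial identity for multinomial coefficients, and then evaluate the ergodic average that appears in the limit.

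First, I would observe that under ESC for CFS one has $f_{\mathbf{i}}\equiv f_{\mathbf{j}}$ (for $\mathbf{i},\mathbf{j}\in\Sigma_n$) if and only if $\mathbf{i}$ and $\mathbf{j}$ have the same block structure: the ``if'' direction is immediate because the maps in a single block share their fixed point and hence commute, while the ``only if'' direction follows because $f_{\mathbf{i}}\equiv f_{\mathbf{j}}$ forces both $\lambda_{\mathbf{i}}=\lambda_{\mathbf{j}}$ and $\Pi(\mathbf{i})=\Pi(\mathbf{j})$, ruling out the separation alternative in the definition of ESC for CFS. Hence the number of words equivalent to $\mathbf{i}$ is the product of block multinomials $M_{\mathbf{i}}=\prod_{l=1}^{B^{\mathbf{i}}}\binom{|b_l^{\mathbf{i}}|}{\#_{(i_l,1)}b_l^{\mathbf{i}},\ldots,\#_{(i_l,n_{i_l})}b_l^{\mathbf{i}}}$, and all such words carry the same Bernoulli weight $p_{\mathbf{i}}$. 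Writing $\mathbf{X}^{(n)}=(X_1,\ldots,X_n)$ for an i.i.d.\ sequence of law $\mathbf{p}$ this yields
\[
H_n = n\,h_{\mathbf{p}} - \mathbb{E}\Bigl[\sum_{l=1}^{B^{\mathbf{X}^{(n)}}}\log M_l^{\mathbf{X}^{(n)}}\Bigr],
\]
so that $h_{RW}(\mu)=h_{\mathbf{p}}-\lim_n\tfrac{1}{n}\mathbb{E}[\sum_l\log M_l]$.

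Next, I would apply the exact telescoping identity $\log\binom{k}{n_1,\ldots,n_M}=\sum_{r=1}^k\log(r/c(r))$ (which comes from $k!/\prod_m n_m!=\prod_{r=1}^kr/c(r)$, with $c(r)$ the rank of the $r$-th symbol in positions $1,\ldots,r$) block by block, obtaining the exact identity
\[
\sum_{l=1}^{B^{\mathbf{X}^{(n)}}}\log M_l^{\mathbf{X}^{(n)}}=\sum_{r=1}^n\log\frac{p(r)}{c(r)},
\]
where $p(r)$ is the position of $r$ inside its block and $c(r)$ is the rank of $X_r$ in its block up to position $r$. I would then extend the i.i.d.\ sequence to a two-sided stationary one $(X_r)_{r\in\mathbb{Z}}$ and apply Birkhoff's ergodic theorem to the function $F(\omega)=\log(p(0)/c(0))$ now computed from the two-sided block containing $0$. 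Since block lengths are geometric with $\mathbb{E}[|b_0|\log|b_0|]<\infty$ whenever $P_i:=\sum_m p_{i,m}<1$ for every relevant $i$, this yields both a.s.\ and $L^1$ convergence $\tfrac1n\sum_{r=1}^n\log(p(r)/c(r))\to\mathbb{E}[F]$; the discrepancy between one-sided and two-sided blocks is confined to the first and last blocks in $(X_1,\ldots,X_n)$ and contributes only $O(1)$ in expectation.

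Finally, I would evaluate $\mathbb{E}[F]$ by conditioning on $X_0=(i,j)$: the backward extent $L$ of the block containing $0$ satisfies $\mathbb{P}(L=l\mid X_0=(i,j))=P_i^l(1-P_i)$ and, given $L=l$, the backward count $c'$ of $(i,j)$ in $X_{-l},\ldots,X_{-1}$ is $\mathrm{Binomial}(l,p_{i,j}/P_i)$; with $p(0)=l+1$ and $c(0)=c'+1$ a direct substitution gives
\[
\mathbb{E}[\log(p(0)/c(0))\mid X_0=(i,j)]=\sum_{l=0}^\infty\sum_{c'=0}^{l}\binom{l}{c'}p_{i,j}^{c'}\Bigl(\sum_{m\neq j}p_{i,m}\Bigr)^{l-c'}(1-P_i)\log\frac{l+1}{c'+1}.
\]
The joint law of $(L,c')$ under $\mathbb{P}(\,\cdot\mid X_0=(i,j))$ coincides with that of $(|b_1|-1,\,Y_{X_1}-1)$ under $\mathbb{P}(\,\cdot\mid X_1=(i,j))$ appearing in the probabilistic interpretation of $\Phi(\mathbf{p})$ recalled in Section~\ref{sec:intro}, and the two logarithms are reciprocals of each other, so $\mathbb{E}[F]=-\Phi(\mathbf{p})$; substituting back gives $h_{RW}(\mu)=h_{\mathbf{p}}+\Phi(\mathbf{p})$. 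The main technical obstacle is the ergodic passage: one has to carefully handle the truncation of the (possibly incomplete) last block in $(X_1,\ldots,X_n)$ and the difference between forward-only and two-sided blocks near position~$1$, and upgrade the a.s.\ Birkhoff convergence to the $L^1$-convergence needed to interchange limit and expectation; both reduce to the integrability of $|b_0|\log|b_0|$, which is immediate from the geometric tails of the block-length distribution.
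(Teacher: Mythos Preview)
Your argument is correct. The identification of overlaps with block-structure equivalence, the telescoping identity $\log\binom{k}{n_1,\ldots,n_M}=\sum_{r=1}^k\log(r/c(r))$, the computation of $\mathbb{E}[F]$ via the backward geometric/binomial decomposition, and the match with $-\Phi(\mathbf p)$ are all fine; the boundary corrections (first block in the one-sided word versus the two-sided block) are indeed $O(1)$ in expectation because both the one-sided first-block length and the two-sided block length have geometric tails, and $F\in L^1$ gives the needed $L^1$ Birkhoff convergence.

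The paper, however, takes a different and somewhat more elementary route. Instead of telescoping $\log M_{\mathbf i}$ over all positions and invoking the ergodic theorem, it computes the increment $H_{n+1}-H_n$ directly: writing $(i,j)\mathbf{i}$ for the word with one symbol prepended and using the one-step multinomial recursion
\[
\frac{\sum_{\mathbf{j}'\in\mathcal C((i,j)\mathbf{i})}p_{\mathbf{j}'}}{\sum_{\mathbf{j}\in\mathcal C(\mathbf{i})}p_{\mathbf{j}}}
=\begin{cases}p_{i,j}&\text{if }(i,j)\text{ starts a new block},\\ p_{i,j}\dfrac{|b_1^{\mathbf i}|+1}{\#_{i,j}b_1^{\mathbf i}+1}&\text{otherwise},\end{cases}
\]
one obtains $H_{n+1}-H_n=h_{\mathbf p}+\Phi_n(\mathbf p)+R_n$, where $\Phi_n$ is the $n$-th partial sum of the series defining $\Phi(\mathbf p)$ and the remainder $R_n$ is bounded by $\log(n+1)\cdot(\max_iP_i)^n\to0$. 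A Stolz--Ces\`aro step then gives $h_{RW}=h_{\mathbf p}+\Phi(\mathbf p)$. The two proofs share the same combinatorial core (your global telescoping is the iterated form of the paper's one-step recursion), but the paper avoids Birkhoff and the two-sided extension entirely; in exchange, your approach makes the probabilistic interpretation of $\Phi(\mathbf p)$ transparent from the outset and handles the limit in one stroke rather than via series manipulations.
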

\begin{proof}
Let us first recall that
$$
H_{n}=-\sum_{\mathbf{i}\in \Sigma_{n}}p_{\mathbf{i}}\log\bigg(\sum_{\substack{\mathbf{j}\in \Sigma_{n}\\ f_{\mathbf{i}}\equiv f_{\mathbf{j}}}}p_\mathbf{j}\bigg)\text{ and }h_{RW}(\mu)=\lim_{n\to\infty}\frac{H_n}{n}.
$$
For every $n\in\N$ and $\mathbf{i},\mathbf{j}\in\Sigma_n$, if $\lambda_{\mathbf{i}}\neq\lambda_{\mathbf{j}}$ then $f_{\mathbf{i}}\not\equiv f_{\mathbf{j}}$ clearly, and if $\lambda_{\mathbf{i}}=\lambda_{\mathbf{j}}$ then by ESC for CFS either $\mathbf{i}$ and $\mathbf{j}$ have the block structure, and in particular, $f_{\mathbf{i}}\equiv f_{\mathbf{j}}$ or $f_{\mathbf{i}}(0)\neq f_{\mathbf{j}}(0)$. Let 
$$
\mathcal{C}(\mathbf{i})=\{\mathbf{j}\in\Sigma_{|\mathbf{i}|}:\text{ $\mathbf{i}$ and $\mathbf{j}$ have the same block structure}\}.$$
Then
$$
H_n=-\sum_{\mathbf{i}\in \Sigma_{n}}p_{\mathbf{i}}\log\bigg(\sum_{\mathbf{j}\in \mathcal{C}(\mathbf{i})}p_\mathbf{j}\bigg).
$$

It is enough to prove that the sequence $H_{n+1}-H_{n}$ converges to the right-hand side of \eqref{eq:RWform}. Indeed, if $\lim_{n\to\infty}(H_{n+1}-H_{n})=h$ then for each $\epsilon>0$ there exists a $N_0\in \mathbb{N}$ such that $h-\epsilon <H_{n+1}-H_n< h+\epsilon$ for all $n>N_{0}$. Thus, for all $k\in\N$ we have 
\begin{align*}
    H_{N_0+k}-H_{N_0}&=\sum_{p=1}^{k}(H_{N_0+p}-H_{N_0+p-1})<k(h+\epsilon),\text{ and similarly}\\
    H_{N_0+k}-H_{N_0}&>k(h-\epsilon).
    \end{align*}
    Thus, by squeeze theorem, we have $$h-\epsilon\leq \lim_{k\to \infty}\frac{H_{N_0+k}}{k+N_0}=h_{RW}(\mu)\leq h+\epsilon.$$
    Since $\epsilon>0$ was arbitrary, we get that $h=h_{RW}(\mu)$ and the claim of the theorem follows. 
    
Now, we will prove that the sequence $H_{n+1}-H_{n}$ converges. First, let us observe the following simple fact: for every $\mathbf{i}\in\Sigma^*$
$$
\sum_{\mathbf{j}\in\mathcal{C}(\mathbf{i})}p_{\mathbf{j}}=p_{\mathbf{i}}\prod_{k=1}^{B^{\mathbf{i}}}\dfrac{|b_k^{\mathbf{i}}|!}{\prod_{(i,j)\in I}(\#_{i,j}b_k^{\mathbf{i}})!}.
$$
Hence, for every $(i,j)\in I$
\begin{equation}\label{eq:sumC}
\frac{\sum_{\mathbf{j}'\in\mathcal{C}((i,j)\mathbf{i})} p_{\mathbf{j}'}}{\sum_{\mathbf{j}\in\mathcal{C}(\mathbf{i})} p_{\mathbf{j}}}=\begin{cases}
    p_{i,j} & \text{ if }|b_1^{(i,j)\mathbf{i}}|=1,\\[2pt]
    p_{i,j}\dfrac{|b_1^{\mathbf{i}}|+1}{\#_{i,j}b_1^{\mathbf{i}}+1} & \text{ if }|b_1^{(i,j)\mathbf{i}}|=1+|b_1^{\mathbf{i}}|.\\
\end{cases}
\end{equation}
\begin{align*}
    H_{n+1}-H_{n}&=-\sum_{\mathbf{i}\in \Sigma_{n+1}}p_{\mathbf{i}}\log\bigg(\sum_{\mathbf{j}\in\mathcal{C}(\mathbf{i})}p_\mathbf{j}\bigg)+\sum_{\mathbf{i}\in \Sigma_{n}}p_{\mathbf{i}}\log\bigg(\sum_{\mathbf{j}\in\mathcal{C}(\mathbf{i})}p_\mathbf{j}\bigg)\\
    &=-\sum_{\mathbf{i}\in \Sigma_{n}}\sum_{(i,j)\in I }p_{i,j}p_{\mathbf{i}}\log\bigg(\frac{\sum_{\mathbf{j}'\in\mathcal{C}((i,j)\mathbf{i})} p_{\mathbf{j}'}}{\sum_{\mathbf{j}\in\mathcal{C}(\mathbf{i})} p_{\mathbf{j}}}\bigg)
    \intertext{by \eqref{eq:sumC}}
    &=-\sum_{(i,j)\in I }\sum_{\substack{\mathbf{i}\in \Sigma_{n}\\ |b_1^{(i,j)\mathbf{i}}|=1}}p_{i,j}p_{\mathbf{i}}\log p_{i,j}-\sum_{(i,j)\in I }\sum_{\substack{\mathbf{i}\in \Sigma_{n}\\ |b_1^{(i,j)\mathbf{i}}|=|b_1^{\mathbf{i}}|+1}}p_{i,j}p_{\mathbf{i}}\log\left(p_{i,j}\dfrac{|b_1^{\mathbf{i}}|+1}{\#_{i,j}b_1^{\mathbf{i}}+1}\right)\\
    &=-\sum_{(i,j)\in I}p_{i,j}\log p_{i,j}-\sum_{(i,j)\in I }\sum_{\substack{\mathbf{i}\in \Sigma_{n}\\ |b_1^{(i,j)\mathbf{i}}|=|b_1^{\mathbf{i}}|+1}}p_{i,j}p_{\mathbf{i}}\log\left(\dfrac{|b_1^{\mathbf{i}}|+1}{\#_{i,j}b_1^{\mathbf{i}}+1}\right)\\
    &=-\sum_{(i,j)\in I}p_{i,j}\log p_{i,j}\\
    &\quad+\sum_{(i,j)\in I }p_{i,j}\sum_{k=1}^{n-1} \sum_{l=0}^k\binom{k}{l}p_{i,j}^l\left(\sum\limits_{\substack{n=1\\m\neq j}}^{n_i}p_{i,m}\right)^{k-l}\left(1-\sum_{m=1}^{n_i}p_{i,m}\right)\log\left(\dfrac{l+1}{k+1}\right)\\
    &\quad+\sum_{(i,j)\in I } p_{i,j}\sum_{l=0}^n\binom{n}{l}p_{i,j}^l\left(\sum_{\substack{m=1\\m\neq j}}^{n_i}p_{i,m}\right)^{n-l}\log\left(\dfrac{l+1}{n+1}\right).
\end{align*}
For the sequence
$$
a_n(i,j):=\sum_{l=0}^n\binom{n}{l}p_{i,j}^l\left(\sum_{\substack{m=1\\m\neq j}}^{n_i}p_{i,m}\right)^{n-l}\log\left(\dfrac{l+1}{n+1}\right),
$$
one can easily see that
$$
0\geq a_n(i,j)\geq -\log(n+1)\left(\sum_{m=1}^{n_i}p_{i,m}\right)^n.
$$
Thus, $a_n(i,j)\to0$ as $n\to\infty$, moreover, the series $\sum_{n=1}^\infty|a_n(i,j)|$ is convergent. Hence,
\begin{align*}
\lim_{n\to\infty}H_{n+1}-H_n=&-\sum_{(i,j)\in I}p_{i,j}\log p_{i,j}\\&+\sum_{(i,j)\in I }p_{i,j}\sum_{k=1}^{\infty} \sum_{l=0}^k\binom{k}{l}p_{i,j}^l\left(\sum_{\substack{m=1\\m\neq j}}^{n_i}p_{i,m}\right)^{k-l}\left(1-\sum_{m=1}^{n_i}p_{i,m}\right)\log\left(\dfrac{l+1}{k+1}\right),
\end{align*}
which had to be shown.
\end{proof}

\begin{proof}[Proof of Theorem~\ref{thm:main}]
    The claim follows by combining Theorem~\ref{MRandom}, Proposition~\ref{ESC for gerenalsystem} and Theorem~\ref{thm:RWentropy}.
\end{proof}

\section{Hausdorff dimension of the attractor corresponding to general CFS}\label{resultforattrac}

In this section, we will prove Theorem \ref{dimattractor}, which is our other main result. First, let us consider the following technical lemma:

\begin{lemma}\label{lem:tehc}
    Let $x_1,\ldots,x_n\in\R$ and let $A$ be an $n\times n$ matrix such that 
    $$
    A_{i,j}=\begin{cases}
        -1 & \text{if $i=j$,}\\
        x_j-1 & \text{otherwise.}
    \end{cases}
    $$
    Then
    $$
    \det(A)=(n-1)(-1)^{n+1}\prod_{k=1}^nx_k+(-1)^{n}\sum_{k=1}^n\prod_{\substack{\ell=1\\\ell\neq k}}^nx_\ell.
    $$
\end{lemma}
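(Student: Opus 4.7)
The key structural observation is that each column of $A$ is a rank-one perturbation of a coordinate vector. Specifically, the $j$-th column can be written
$$c_j = (x_j-1)\mathbf{1} - x_j e_j,$$
where $\mathbf{1} = (1,\ldots,1)^T$ and $e_j$ is the $j$-th standard basis vector; equivalently, $A = -\mathrm{diag}(x_1,\ldots,x_n) + \mathbf{1}\,(x_1-1,\ldots,x_n-1)$. This presents $A$ as a rank-one update of a diagonal matrix, which suggests two natural routes: apply the matrix determinant lemma, or expand directly using multilinearity of the determinant in the columns.

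I would take the multilinearity route, because the identity we want is a polynomial identity in $x_1,\ldots,x_n$, and this method avoids the nuisance of the $x_k=0$ case that the matrix determinant lemma would force us to handle separately. Write
$$\det(A) = \det\bigl((x_1-1)\mathbf{1} - x_1 e_1,\; \ldots,\; (x_n-1)\mathbf{1} - x_n e_n\bigr)$$
and expand by multilinearity. In each of the $2^n$ resulting terms, each column contributes either the $(x_j-1)\mathbf{1}$ piece or the $-x_j e_j$ piece. Any term in which two or more columns contribute $\mathbf{1}$ has two equal columns, and so vanishes. Only two kinds of terms survive.

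The first surviving term is when every column contributes $-x_j e_j$; this gives $\det\bigl(\mathrm{diag}(-x_1,\ldots,-x_n)\bigr) = (-1)^n \prod_{k=1}^n x_k$. The remaining surviving terms are indexed by the unique column $k$ that contributes $(x_k-1)\mathbf{1}$, while every other column $j\neq k$ contributes $-x_j e_j$. The factor in front is $(x_k-1)\prod_{j\neq k}(-x_j) = (-1)^{n-1}(x_k-1)\prod_{j\neq k}x_j$, and $\det(e_1,\ldots,e_{k-1},\mathbf{1},e_{k+1},\ldots,e_n)=1$ (subtract each $e_j$ with $j\neq k$ from the $k$-th column to get the identity). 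Summing these and using $\sum_k (x_k-1)\prod_{j\neq k}x_j = n\prod_{j}x_j - \sum_k\prod_{j\neq k}x_j$ collects the contributions as
$$\det(A) = (-1)^n\prod_k x_k + (-1)^{n-1}\left(n\prod_k x_k - \sum_k \prod_{j\neq k}x_j\right) = (n-1)(-1)^{n+1}\prod_k x_k + (-1)^n\sum_k\prod_{j\neq k}x_j,$$
which is the claimed formula. There is no real obstacle here; the only step requiring care is the sign bookkeeping in the last line, and even that is essentially forced by matching the $n=1$ and $n=2$ cases.
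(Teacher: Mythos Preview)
Your proof is correct and takes a genuinely different route from the paper. The paper argues by induction on $n$: after verifying the base case $n=2$, it subtracts one row from another to create zeros, expands along that row, and applies the inductive hypothesis to one of the resulting minors while reducing the other minor to triangular form by further row operations. Your argument instead recognises each column as $(x_j-1)\mathbf{1}-x_j e_j$, expands by multilinearity in the columns, and exploits the fact that any term with two $\mathbf{1}$'s vanishes. This gives a direct, non-inductive computation. The multilinearity approach is cleaner and reveals why the answer has the shape it does (a rank-one perturbation of a diagonal matrix), whereas the paper's induction, though entirely elementary, obscures this structure behind row manipulations and cofactor bookkeeping.
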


\begin{proof}
    Let us argue by induction for $n$. For $n=2$,
    $$
    \det\begin{pmatrix} -1 & x_2-1 \\ x_1-1 & -1\end{pmatrix}=(-1)^2-(x_2-1)(x_1-1)=x_1+x_2-x_1x_2,
    $$
    hence, the claim holds. Now, suppose that the claim holds for $n$. Then, by a simple row manipulation
    \begin{align*}
    &\det\begin{pmatrix}
        -1 & x_2-1 & \cdots & x_{n+1}-1\\
        x_1-1 & \ddots & & \vdots\\
        \vdots &  & \ddots & x_{n+1}-1\\
        x_1-1 & \cdots & x_n-1 & -1
    \end{pmatrix}=\det\begin{pmatrix}
        -x_1 & x_2 & 0& \cdots & 0\\
        x_1-1 & -1 & x_3-1 & \cdots & x_{n+1}-1\\
        & & \ddots &   & \vdots\\
        \vdots &  &  & \ddots &  x_{n+1}-1\\
        x_1-1 & \cdots & & x_n-1 & -1
    \end{pmatrix}
    \intertext{by the inductive assumption}
    &\quad=-x_1\left((n-1)(-1)^{n+1}\prod_{k=2}^{n+1}x_k+(-1)^n\sum_{k=2}^{n+1}\prod_{\substack{\ell=2\\\ell\neq k}}^{n+1}x_\ell\right)\\
    &\hspace{7cm}-x_2\det\begin{pmatrix}         x_1-1 & x_3-1 & \cdots & x_{n+1}-1\\         x_1-1 & -1 & & \vdots\\         \vdots & \ddots & \ddots & x_{n+1}-1\\         x_1-1 & \cdots & x_n-1 & -1     \end{pmatrix}
    \intertext{and again by a simple row manipulation}
    &\quad=-x_1\left((n-1)(-1)^{n+1}\prod_{k=2}^{n+1}x_k+(-1)^n\sum_{k=2}^{n+1}\prod_{\substack{\ell=2\\\ell\neq k}}^{n+1}x_\ell\right)\\
    &\hspace{7cm}-x_2\det\begin{pmatrix}         x_1-1 & x_3-1 & \cdots & x_{n+1}-1\\         0 & -x_3 & 0 & 0\\         \vdots & \ddots & \ddots & 0\\         0 & \cdots & 0 & -x_{n+1}     \end{pmatrix}\\
     &\quad=-x_1\left((n-1)(-1)^{n+1}\prod_{k=2}^{n+1}x_k+(-1)^n\sum_{k=2}^{n+1}\prod_{\substack{\ell=2\\\ell\neq k}}^{n+1}x_\ell\right)-x_2(-1)^{n-1}(x_1-1)\prod_{k=3}^{n+1}x_k\\
    &\quad=n(-1)^{n+2}\prod_{k=1}^{n+1}x_k+(-1)^{n+1}\sum_{k=2}^{n+1}\prod_{\substack{\ell=1\\\ell\neq k}}^{n+1}x_\ell+(-1)^{n+1}\prod_{\ell=2}^{n+1}x_\ell,
    \end{align*}
which completes the proof.
\end{proof}

Let $\mathcal{S}_{\underline{\lambda},\underline{t}}$ be the self-similar IFS with common fixed point structure defined in \eqref{eq:mainIFS}
$$
\mathcal{S}_{\underline{\lambda},\underline{t}}=\{f_{i,j}(x)=\lambda_{i,j}x+t_i(1-\lambda_{i,j}):~1\leq i\leq N~\&~1\leq j\leq n_i\}.
$$
We assume that $\mathcal{S}_{\underline{\lambda},\underline{t}}$ satisfies the ESC for CFS. For every $i=1,\ldots,N$ and every $n\in\N$, let 
$$
T_i^{(n)}:=\{(i,j_1)\cdots(i,j_n):1\leq j_1\leq\cdots\leq j_n\leq n_i\}.
$$
and let $\mathcal{V}_n=\bigcup_{i=1}^N\bigcup_{m=1}^nT_i^{(m)}$. Clearly, $\mathbf{i}=b_1^{\mathbf{i}}$ for every $\mathbf{i}\in\mathcal{V}_n$. Let us define a directed graph $\mathcal{G}_n=(\mathcal{V}_n,\mathcal{E}_n)$ with vertices $\mathcal{V}_n$ and edges  
$$
\mathcal{E}_n=\{\mathbf{i}\to\mathbf{j}:~t_{\mathbf{i}}\neq t_{\mathbf{j}}\}.
$$
The graph $\mathcal{G}_n$ is strongly connected, that is, there exists a path (of length at most two) from $\mathbf{i}$ to $\mathbf{j}$ for any $\mathbf{i},\mathbf{j}\in\mathcal{V}_n$. Furthermore, for $N\geq3$ the graph $\mathcal{G}_n$ is aperiodic. 

For every $n\in\N$, we define a graph-directed self-similar IFS $\mathcal{S}_n=\{f_{\mathbf{j}}:(\mathbf{i}\to\mathbf{j})\in\mathcal{E}_n\}$. By Mauldin and Williams \cite[Theorem~1.1]{MauldinWilliams}, for every $\mathbf{i}\in\mathcal{V}_n$ there exists a unique non-empty compact set $A_n^{\mathbf{i}}$ such that
$$
A_n^{\mathbf{i}}=\bigcup_{\mathbf{j}:~\mathbf{i}\to\mathbf{j}\in\mathcal{E}_n}f_{\mathbf{j}}(A_n^{\mathbf{j}}).
$$
Clearly, $A_n:=\bigcup_{\mathbf{i}\in\mathcal{V}_n}A_n^{\mathbf{i}}\subseteq A$ for every $n\in\N$, where $A$ is the attractor of $\mathcal{S}$.

We say that the graph-directed self-similar IFS $\mathcal{S}_n$ satisfies the \texttt{exponential separation condition for graph directed system (ESC for GDS)} if there exists $b>0$ such that for every $m\in\N$ and every two different paths $\mathbf{i}_1\to\cdots\to\mathbf{i}_m$ and $\mathbf{j}_1\to\cdots\to\mathbf{j}_m$ of length $m$ in $\mathcal{G}_n$ with $\lambda_{\mathbf{i}_2\cdots\mathbf{i}_m}=\lambda_{\mathbf{j}_2\cdots\mathbf{j}_m}$ and same initial vertices $\mathbf{i}_1=\mathbf{j}_1$, we have $|f_{\mathbf{i}_2\cdots\mathbf{i}_m}(0)-f_{\mathbf{j}_2\cdots\mathbf{j}_m}(0)|\geq 2^{-bm}$.

\begin{lemma}\label{lem:ESCGD}
    Let $\underline{\lambda}\notin G=\bigcup_{n=1}^\infty G_{1/n}$, where $G_{1/n}$ is defined in \eqref{eq:G1}. If the IFS $\mathcal{S}_{\underline{\lambda},\underline{t}}$ satisfies the ESC for CFS, then the graph-directed self-similar IFS $\mathcal{S}_n$ satisfies the ESC for GDS.
\end{lemma}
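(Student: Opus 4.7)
The plan is to reduce ESC for GDS on the graph-directed system $\mathcal{S}_n$ to ESC for CFS on the underlying IFS $\mathcal{S}_{\underline{\lambda},\underline{t}}$ by viewing a path in $\mathcal{G}_n$ as a single word in $\Sigma^*$ obtained by concatenating the vertices traversed. Given two paths $\mathbf{i}_1\to\cdots\to\mathbf{i}_m$ and $\mathbf{j}_1\to\cdots\to\mathbf{j}_m$ in $\mathcal{G}_n$ with $\lambda_{\mathbf{i}_2\cdots\mathbf{i}_m}=\lambda_{\mathbf{j}_2\cdots\mathbf{j}_m}$ that produce different compositions, I would set $\mathbf{I}:=\mathbf{i}_2\mathbf{i}_3\cdots\mathbf{i}_m$ and $\mathbf{J}:=\mathbf{j}_2\mathbf{j}_3\cdots\mathbf{j}_m$ as words in $\Sigma^*$, noting that $\lambda_{\mathbf{I}}=\lambda_{\mathbf{J}}$ and $f_{\mathbf{i}_2\cdots\mathbf{i}_m}(0)=\Pi(\mathbf{I})$.

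The first key step is to show $|\mathbf{I}|=|\mathbf{J}|$. Since $\underline{\lambda}\notin G$, Lemma~\ref{Descrofmoreblock} gives $\#_{i,j}\mathbf{I}=\#_{i,j}\mathbf{J}$ for every $(i,j)\in I$, hence the common length $L:=|\mathbf{I}|=|\mathbf{J}|$ satisfies $L\leq (m-1)n$ because each vertex of $\mathcal{V}_n$ has length at most $n$. The second key step is to identify the block decomposition of $\mathbf{I}$: by construction, each vertex $\mathbf{i}_k\in\mathcal{V}_n$ is an ordered word all of whose symbols share the same fixed-point index, and the edge condition $t_{\mathbf{i}_k}\neq t_{\mathbf{i}_{k+1}}$ forces consecutive vertices to have distinct fixed-point indices, so the blocks of $\mathbf{I}$ in the sense of Section~\ref{sec:block} are precisely $b_\ell^{\mathbf{I}}=\mathbf{i}_{\ell+1}$ for $\ell=1,\dots,m-1$, and likewise for $\mathbf{J}$.

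Now the ordering convention on $\mathcal{V}_n$ (each vertex is a weakly increasing word in $(i,j)$) implies that a vertex is uniquely determined by its fixed-point index together with its symbol-count multiset. Therefore $\mathbf{I}$ and $\mathbf{J}$ share the same block structure if and only if $\mathbf{i}_\ell=\mathbf{j}_\ell$ for every $\ell=2,\dots,m$, i.e.\ if and only if the two paths produce the same composition. Hence in our non-trivial case $\mathbf{I}$ and $\mathbf{J}$ have different block structures, and the ESC for CFS applied to $\mathbf{I},\mathbf{J}\in\Sigma_L$ with $\lambda_{\mathbf{I}}=\lambda_{\mathbf{J}}$ yields
$$
|f_{\mathbf{i}_2\cdots\mathbf{i}_m}(0)-f_{\mathbf{j}_2\cdots\mathbf{j}_m}(0)|=|\Pi(\mathbf{I})-\Pi(\mathbf{J})|>2^{-bL}\geq 2^{-b(m-1)n}\geq 2^{-(bn)m}.
$$
Taking $b':=bn$ gives the ESC for GDS. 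The only delicate point — and the step I expect to require the most care — is matching up the notion of ``different paths'' in $\mathcal{G}_n$ with ``different block structures'' in $\Sigma^*$, which is exactly what the ordering built into the vertex set $\mathcal{V}_n$ is designed to do; small-$m$ cases where $L$ is below the threshold for ESC for CFS are absorbed by enlarging $b'$ over finitely many exceptions.
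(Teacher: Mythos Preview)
Your proposal is correct and follows essentially the same route as the paper's proof: concatenate the path labels into words $\mathbf{I}=\mathbf{i}_2\cdots\mathbf{i}_m$, $\mathbf{J}=\mathbf{j}_2\cdots\mathbf{j}_m$, use $\underline{\lambda}\notin G$ (Lemma~\ref{Descrofmoreblock}) to force equal symbol counts and hence equal length $L\leq(m-1)n$, observe that the blocks of $\mathbf{I}$ are exactly the vertices $\mathbf{i}_2,\dots,\mathbf{i}_m$, and then invoke ESC for CFS to get $|\Pi(\mathbf{I})-\Pi(\mathbf{J})|>2^{-bL}\geq 2^{-bnm}$.

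You are in fact more explicit than the paper on two points. First, the paper simply asserts that different paths give $f_{\mathbf{i}_2\cdots\mathbf{i}_m}\not\equiv f_{\mathbf{j}_2\cdots\mathbf{j}_m}$ and applies ESC for CFS; you spell out why different paths force different block structures, namely because the ordering convention $1\leq j_1\leq\cdots\leq j_k$ in the definition of $T_i^{(k)}$ makes each vertex the canonical representative of its block, so equal block structures would force $\mathbf{i}_\ell=\mathbf{j}_\ell$ for all $\ell\geq 2$. Second, you note that ESC for CFS only applies once $L$ exceeds the threshold $N$ in its definition, and that the finitely many short cases can be absorbed into the constant; the paper's proof glosses over this. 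Both refinements are correct and do not change the argument's substance.
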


\begin{proof}
    Let $\mathbf{i}_1\to\cdots\to\mathbf{i}_m$ and $\mathbf{j}_1\to\cdots\to\mathbf{j}_m$ be different allowed paths with $\mathbf{i}_1=\mathbf{j}_1$ and $\lambda_{\mathbf{i}_2\cdots\mathbf{i}_m}=\lambda_{\mathbf{j}_2\cdots\mathbf{j}_m}$. Since $\lambda_{\mathbf{i}_2\cdots\mathbf{i}_m}=\lambda_{\mathbf{j}_2\cdots\mathbf{j}_m}$, we have $\#_{i,j}\mathbf{i}_2\cdots\mathbf{i}_m=\#_{i,j}\mathbf{j}_2\cdots\mathbf{j}_m$ for every $(i,j)\in I$, and in particular, $|\mathbf{i}_2\cdots\mathbf{i}_m|=|\mathbf{j}_2\cdots\mathbf{j}_m|$. Furthermore, since the paths are different $f_{\mathbf{i}_2\cdots\mathbf{i}_m}\not\equiv f_{\mathbf{j}_2\cdots\mathbf{j}_m}$ and the IFS $\mathcal{S}$ satisfies the ESC for CFS
    $$
    |f_{\mathbf{i}_2\cdots\mathbf{i}_m}(0)-f_{\mathbf{j}_2\cdots\mathbf{j}_m}(0)|>2^{-b|\mathbf{i}_2\cdots\mathbf{i}_m|}\geq 2^{-bnm},
    $$
    which had to be proven.
\end{proof}

We remark that Prokaj and Simon \cite[Corollary 7.2]{ProkajSimon} have already studied the dimension of the attractor of graph-directed IFSs under a stronger version of ESC, namely, when the IFS formed by the union of all the maps in the graph-directed system satisfies the ESC. However, the common fixed point structure of the original IFS from which our graph-directed system was deduced stands in the way of this condition. So, we cannot apply their result directly.

Now, let us define the following matrix with elements indexed by $\mathcal{V}_n$: 
$$
(B_n^{(s)})_{\mathbf{i},\mathbf{j}}:=\begin{cases}
    0 & \text{if }t_{\mathbf{i}}=t_{\mathbf{j}},\\
    \lambda_{\mathbf{j}}^s & \text{if }t_{\mathbf{i}}\neq t_{\mathbf{j}}.
\end{cases}
$$
Since $\mathcal{G}_n$ is strongly connected, we get that $B_n^{(s)}$ is an irreducible matrix with non-negative entries. Moreover, the matrix $B_n^{(s)}$ is primitive for each $n\in \mathbb{N}.$

{ \begin{lemma}\label{Eqspect}  Let $\underline{\lambda}\notin G=\bigcup_{n=1}^\infty G_{1/n}$, where $G_{1/n}$ is defined in \eqref{eq:G1}. If the IFS $\mathcal{S}_{\underline{\lambda},\underline{t}}$ satisfies the ESC for CFS, then for each $n\in \mathbb{N}$, $$
\dim_HA_n=\min\{1,s_n\},
$$
where $s_n$ is the unique solution of the equation $\rho(B_n^{(s_n)})=1$, where $\rho(B)$ denotes the spectral radius of the matrix $B$.
\end{lemma}}
{
\begin{proof} It is well known that $
\dim_H A_{n}\leq \min\{1,s_n\}.$ Next, we will show the lower bound. 
 For each $\bold{i}\in \mathcal{V}_n$ and $m\in \mathbb{N},$ we define a set a follows
 $$C_{\bold{i}}^{m}:=\{\bold{i}_{1}\to \bold{i}_{2} \to \cdots\to\mathbf{i}_{m}: \bold{i}_{1}=\mathbf{i}_{m}=\bold{i}\}.$$
 The set $C_{\bold{i}}^{m}$ is the collection of all $m$ length paths that start and end at the vertex $\bold{i}.$
Now, we define a self-similar IFS $\mathcal{F}_{n}^{m}$ as follows
$$\mathcal{F}_{n}^{m}:=\{f_{\bold{i}_{2}}\circ f_{\bold{i}_{3}}\circ \dots \circ f_{\bold{i}_{m-1}}\circ f_{\bold{i}}: \bold{i}\to \bold{i}_{2} \to \cdots\to\mathbf{i}_{m-1}\to \bold{i}\in C_{\bold{i}}^{m}\}.$$
Let $A_{n}^{m}$ be the attractor of the IFS $\mathcal{F}_{n}^{m}$. Clearly, $A_{n}^{m}\subseteq A_{n}$ for each $m\in \mathbb{N}.$ By Lemma \ref{lem:ESCGD}, it is clear that the IFS $\mathcal{F}_{n}^{m}$ satisfies ESC. Then by Theorem \ref{MainHochmanresult},
$$\dim_{H}(A_{n}^{m})=\min \{1,s^{m}\},$$
where $s^{m}$ is the similarity dimension of the self-similar IFS $\mathcal{F}_{n}^{m}$. Note that the sequence $\{s^{m}\}_{m=1}^{\infty}$ is uniformly bounded above by the similarity dimension of the IFS $\mathcal{S}_{\underline{\lambda},\underline{t}}$. For a fixed $m\in \mathbb{N},$ the sequence $\{s^{km}\}_{k=1}^{\infty}$ is monotonically increasing sequence. Let $\lim_{k\to \infty} s^{km}=s^{*m}$. Hence, 
\begin{equation}\label{eq:Anlower}
\min\{1,s^{*m}\}\leq\dim_HA_n\text{ for each }m\in\N.
\end{equation}
The following key fact is due to the Perron-Frobenius theorem,
$$\lim_{m\to \infty} ((B_{n}^{(s)})^{m}_{\bold{i}, \bold{i}})^{1/m}=\rho(B_n^{(s)}).$$
This implies that 
\begin{equation}\label{Peronresult}
    \lim_{m\to \infty} \bigg(\sum_{\bold{j}\in \mathcal{F}_{n}^{m}}\lambda_{\bold{j}}^{s}\bigg)^{1/m}=\rho(B_n^{(s)}).
\end{equation}
For each fixed $M\in \mathbb{N},$
$$\lim_{k\to \infty} \bigg(\sum_{\bold{j}\in \mathcal{F}_{n}^{Mm}}\lambda_{\bold{j}}^{s^{km}}\bigg)=\sum_{\bold{j}\in \mathcal{F}_{n}^{Mm}}\lambda_{\bold{j}}^{s^{*m}}\leq 1.$$ 
By \eqref{Peronresult}, there exists an $N\in \mathbb{N}$ such that 
$$\left|\left(\sum_{\bold{j}\in \mathcal{F}_{n}^{km}}\lambda_{\bold{j}}^{s^{*m}}\right)^{1/km}-\rho(B_n^{(s^{*m})})\right|\leq \epsilon \quad \text{for all}\quad  k\geq N.$$
This implies that $\rho(B_n^{(s^{*m})})\leq 1+\epsilon$. Since   $\epsilon$ is arbitrary, we have $\rho(B_n^{(s^{*m})})\leq 1.$ It is well known that $s\mapsto\rho(B_n^{(s)})$ is a decreasing function of $s$. Thus, we get $s_n\leq s^{*m}.$ By \eqref{eq:Anlower}, we get 
 $$\min\{1,s_{n}\}\leq \min\{1,s^{*m}\}\leq \dim_H A_{n}.$$
\end{proof}}

By Lemma~\ref{Eqspect} and $A_{n}\subseteq A$, we have 
\begin{equation}\label{eq:lowerb}
\min\{1,\lim_{n\to\infty}s_n\}\leq\dim_HA.
\end{equation}
Note that $s_n$ is monotone increasing and bounded by the similarity dimension of $\mathcal{S}$, so the limit $\lim_{n\to\infty}s_n$ exists.

\begin{lemma}\label{lem:limit}
    Let $s_0$ be the unique solution of the equation \eqref{eq:dimatt}. Then $\lim\limits_{n\to\infty}s_n=s_0$.
\end{lemma}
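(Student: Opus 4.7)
The plan is to convert the spectral-radius condition $\rho(B_n^{(s_n)})=1$ into a scalar equation in $s_n$, identify its pointwise limit as $n\to\infty$, and match it with the defining equation of $s_0$.

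First I would reduce the eigenvalue problem for $B_n^{(s)}$ to an $N\times N$ problem. The key observation is that the $\mathbf{i}$-th row of $B_n^{(s)}$ depends only on the first coordinate $i$ of $\mathbf{i}\in\mathcal{V}_n$: by construction, its entries are $\lambda_{\mathbf{j}}^s$ when the first coordinate of $\mathbf{j}$ differs from that of $\mathbf{i}$, and $0$ otherwise. Irreducibility of $B_n^{(s)}$ (inherited from strong connectivity of $\mathcal{G}_n$) together with Perron--Frobenius give a unique strictly positive eigenvector $v$ for $\rho(B_n^{(s)})$, and the row-symmetry then forces $v$ to be constant on every fibre $\bigcup_{m=1}^{n}T_i^{(m)}$. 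Writing $w_i$ for that common value and
\[
Q_i^{(n)}(s):=\sum_{m=1}^{n}\sum_{\mathbf{j}\in T_i^{(m)}}\lambda_{\mathbf{j}}^s,
\]
the equation $B_n^{(s)}v=\rho v$ collapses to the $N\times N$ system $\rho w_i=\sum_{i'\neq i}Q_{i'}^{(n)}(s)\,w_{i'}$. Consequently $\rho(B_n^{(s)})$ coincides with the Perron eigenvalue of the $N\times N$ matrix $M^{(n)}(s)$ with zero diagonal and $(i,i')$-entry $Q_{i'}^{(n)}(s)$.

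Next I would detect the condition $\rho(M^{(n)}(s))=1$ via Lemma~\ref{lem:tehc}. With the substitution $x_i=1+Q_i^{(n)}(s)$, the matrix $M^{(n)}(s)-I$ has $-1$ on the diagonal and $x_j-1$ off-diagonal, so the lemma yields
\[
\det\!\bigl(M^{(n)}(s)-I\bigr)=(-1)^{N}\Bigl(\prod_{i=1}^{N}x_i\Bigr)\Bigl(\sum_{i=1}^{N}\tfrac{1}{x_i}-(N-1)\Bigr).
\]
Since $x_i>0$, this vanishes exactly when $\sum_{i}(1+Q_i^{(n)}(s))^{-1}=N-1$; moreover the positive eigenvector $w_i\propto 1/x_i$ produced by this computation certifies that the eigenvalue in question is the Perron one. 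Hence $s_n$ is the unique positive root of
\[
f_n(s):=\sum_{i=1}^{N}\frac{1}{1+Q_i^{(n)}(s)}=N-1,
\]
uniqueness following from the strict decrease of $Q_i^{(n)}(\cdot)$ (since $\lambda_{i,j}\in(0,1)$) and hence the strict increase of $f_n$.

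Finally, I would pass to the limit. Identifying $\mathbf{j}\in T_i^{(m)}$ with its multiplicity multiset over $\{1,\dots,n_i\}$ and summing a geometric series gives
\[
1+\sum_{m=1}^{\infty}\sum_{\mathbf{j}\in T_i^{(m)}}\lambda_{\mathbf{j}}^s=\prod_{j=1}^{n_i}\frac{1}{1-\lambda_{i,j}^s},
\]
so $1+Q_i^{(n)}(s)\nearrow\prod_j(1-\lambda_{i,j}^s)^{-1}$ uniformly on compact subsets of $s>0$, and therefore $f_n\searrow f$ uniformly on compacts, where $f(s)=\sum_i\prod_j(1-\lambda_{i,j}^s)$. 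The function $f$ is continuous and strictly increasing from $0$ to $N$ on $(0,\infty)$, and by \eqref{eq:dimatt} $s_0$ is its unique root of $f(s)=N-1$. As already noted in the paper, $s_n$ is monotone and bounded; writing $s^*=\lim s_n$ and combining uniform convergence of $f_n$ with continuity of $f$ yields
\[
f(s^*)=\lim_{n\to\infty}f_n(s_n)=N-1,
\]
whence $s^*=s_0$ by uniqueness. The main technical hurdle, in my view, is Step~1: carefully justifying that the Perron eigenvector of the combinatorially large matrix $B_n^{(s)}$ is truly constant on each fibre, which relies on combining the row-symmetry with the positivity/uniqueness clause of Perron--Frobenius. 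After that, Step~2 is pure algebra via Lemma~\ref{lem:tehc} and Step~3 is a clean monotone-convergence argument.
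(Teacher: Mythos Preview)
Your proposal is correct and follows essentially the same route as the paper: reduce the large matrix $B_n^{(s)}$ to the $N\times N$ matrix the paper calls $C_n^{(s)}$ (your $M^{(n)}(s)$), apply Lemma~\ref{lem:tehc} to the characteristic determinant, and pass to the limit using the geometric-series identity $1+Q_i^{(\infty)}(s)=\prod_j(1-\lambda_{i,j}^s)^{-1}$. The only noteworthy difference is the direction of Step~1: the paper lifts the Perron eigenvector of $C_n^{(s)}$ to $B_n^{(s)}$, whereas you start from the Perron eigenvector of $B_n^{(s)}$ and use identical rows to force constancy on fibres---both are valid, and yours is no harder than you feared (identical rows plus $\rho>0$ immediately give equal entries), so Step~1 is not really the bottleneck.
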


\begin{proof}
 Let us define the following $N\times N$ matrix $C_n^{(s)}$ as follows:
$$
(C_n^{(s)})_{i,k}=\begin{cases}
    \sum_{\mathbf{j}\in \cup_{m=1}^{n} T_{k}^{(m)}}\lambda_{\mathbf{j}}^s & \text{if }i\neq k,\\
    0 & \text{if }i=k.
\end{cases}
$$
First, we show that $\rho(B_n^{(s)})=\rho(C_n^{(s)})$. By Perron-Frobenius theorem, there exists $v=(v_1,\ldots,v_N)^T$ with strictly positive entries such that $C_n^{(s)}v=\rho(C_n^{(s)})v$. Now, we define the vector $w=(w_{\mathbf{i}})_{\mathbf{i}\in\mathcal{V}_n}$ as follows: $w_{\mathbf{i}}=v_i$ if $t_{\mathbf{i}}=t_i$. Then for every $\mathbf{i}\in\mathcal{V}_n$ with $t_{\mathbf{i}}=t_i$
$$
(B_n^{(s)}w)_{\mathbf{i}}=\sum_{\mathbf{j}:~\mathbf{i}\to\mathbf{j}\in\mathcal{E}_n}\lambda_{\mathbf{j}}^sw_{\mathbf{j}}=\sum_{\substack{k=1\\k\neq i}}^N\left(\sum_{\mathbf{j}\in \cup_{m=1}^{n}T_k^{(m)}}\lambda_{\mathbf{j}}^s\right)v_{k}=\rho(C_n^{(s)})v_i=\rho(C_n^{(s)})w_{\mathbf{i}}.
$$
Again applying Perron-Frobenius Theorem, we get that $\rho(B_n^{(s)})=\rho(C_n^{(s)})$. Thus, $\rho(C_n^{(s_n)})=1$, and in particular, $\det(C_n^{(s_n)}-\mathrm{Id})=0$, where $\mathrm{Id}$ denotes the identity matrix. By denoting 
$$
a_n^i(s):=\sum_{\mathbf{j}\in \cup_{m=1}^{n} T_i^{(m)}}\lambda_{\mathbf{j}}^s+1~\text{for}~i\in \{1,2,\dots,N\},
$$
we get by Lemma~\ref{lem:tehc}
$$
0=|\det(C_n^{(s_n)}-\mathrm{Id})|=(N-1)\prod_{i=1}^Na_n^i(s_n)-\sum_{k=1}^N\prod_{\substack{\ell=1\\\ell\neq k}}^Na_n^\ell(s_n).
$$
It is easy to see that for every $s\in(0,\infty)$, $\lim_{n\to\infty}a_n^i(s)=\prod_{j=1}^{n_i}(1-\lambda_{i,j}^s)^{-1}$, and by continuity, $s_0$ satisfies the equation
$$
0=(N-1)\prod_{i=1}^N\prod_{j=1}^{n_i}(1-\lambda_{i,j}^{s_0})^{-1}-\sum_{k=1}^N\prod_{\substack{\ell=1\\\ell\neq k}}^N\prod_{j=1}^{n_\ell}(1-\lambda_{\ell,j}^{s_0})^{-1},
$$
and so
$$
0=(N-1)-\sum_{k=1}^N\prod_{j=1}^{n_k}(1-\lambda_{k, j}^{s_0}),
$$
which had to be proven.
\end{proof}

To finish the proof, we show that $s_0$ is an upper bound. Let $T_i^{(\infty)}=\bigcup_{n=1}^\infty T_i^{(n)}$, and for $s>0$ let 
$$
(C_\infty^{(s)})_{i,k}=\begin{cases}
    \sum_{\mathbf{j}\in T_{k}^{(\infty)}}\lambda_{\mathbf{j}}^s=\prod_{j=1}^{n_k}(1-\lambda_{k,j}^s)^{-1}-1 & \text{if }i\neq k,\\
    0 & \text{if }i=k.
\end{cases}
$$
Since $C_n^{(s)}\to C_{\infty}^{(s)}$ as $n\to\infty$, the quantity $s_0$ is the unique solution of the equation $\rho(C_\infty^{(s_0)})=1$.

\begin{lemma}\label{lem:ub}
    Let $\mathcal{S}$ be the self-similar IFS defined in \eqref{eq:mainIFS}. Then for every $\underline{\lambda}=(\lambda_{i,j})_{i=1,j=1}^{N,n_i}$ and $\underline{t}=(t_i)_{i=1}^N$
    $$\dim_{H}\Lambda\leq \min\{1,s_0\},$$
    where $\Lambda$ is the attractor of $\mathcal{S}$.
\end{lemma}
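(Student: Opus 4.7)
The plan is to prove that $\mathcal{H}^s(\Lambda)=0$ for every $s>s_0$; combined with the trivial bound $\dim_H \Lambda\leq 1$ (since $\Lambda\subset\mathbb{R}$), this yields $\dim_H \Lambda\leq \min\{1,s_0\}$.

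I would iterate the block decomposition from Section~\ref{sec:block} to build efficient covers of $\Lambda$. Fix $m\geq 1$. Any coding $\mathbf{i}\in\Sigma$ with at least $m$ blocks satisfies $\Pi(\mathbf{i})\in f_{b_1^{\mathbf{i}}\cdots b_m^{\mathbf{i}}}(\Lambda)$, where $(b_1^{\mathbf{i}},\dots,b_m^{\mathbf{i}})$ is an admissible block tuple, that is, $b_\ell^{\mathbf{i}}\in T_{k_\ell}^{(\infty)}$ with $k_\ell\neq k_{\ell+1}$. The codings with fewer than $m$ blocks project onto a countable set $B_m$ (such points have the form $f_{\mathbf{j}_1\cdots\mathbf{j}_{l-1}}(t_{k_l})$ for some $l\leq m$), which therefore has vanishing $s$-Hausdorff measure for every $s>0$. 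Since $\lambda_{\max}:=\max_{i,j}\lambda_{i,j}<1$ and every block has length at least one, one has $\lambda_{\mathbf{j}_1\cdots\mathbf{j}_m}\leq \lambda_{\max}^m$, so the resulting countable cover of $\Lambda\setminus B_m$ consists of sets of diameter at most $\lambda_{\max}^m|\Lambda|$, which tends to $0$ as $m\to\infty$.

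The central computation is the sum of $s$-th powers of these diameters. Writing $\alpha_i:=\prod_{j=1}^{n_i}(1-\lambda_{i,j}^s)^{-1}-1=\sum_{\mathbf{j}\in T_i^{(\infty)}}\lambda_{\mathbf{j}}^s$ for the common value of the off-diagonal entries in the $i$-th column of $C_\infty^{(s)}$, factoring the sum block by block one obtains
\begin{equation*}
\sum_{(\mathbf{j}_1,\dots,\mathbf{j}_m)}\lambda_{\mathbf{j}_1\cdots\mathbf{j}_m}^s=\sum_{i=1}^{N}\alpha_i\bigl((C_\infty^{(s)})^{m-1}\mathbf{1}\bigr)_i,
\end{equation*}
where $\mathbf{1}$ denotes the all-ones vector in $\mathbb{R}^N$. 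Since $\rho(C_\infty^{(s_0)})=1$ (as noted just before the lemma) and the entries of $C_\infty^{(s)}$ are strictly decreasing in $s$, one has $\rho(C_\infty^{(s)})<1$ for every $s>s_0$; hence $(C_\infty^{(s)})^{m-1}\mathbf{1}\to 0$ exponentially in $m$ and the displayed sum tends to $0$. Combining with the diameter bound, $\mathcal{H}^s_{\lambda_{\max}^m|\Lambda|}(\Lambda)\to 0$, and so $\mathcal{H}^s(\Lambda)=0$ for every $s>s_0$, as required.

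The heart of the proof is the spectral estimate for $C_\infty^{(s)}$, which is already essentially contained in the analysis of Lemma~\ref{lem:limit}; the rest is a Moran-type covering argument. The only real bookkeeping issue is handling the exceptional countable set $B_m$ of points whose only codings have finite block decomposition and therefore do not lie in any $m$-block cylinder; since $B_m$ is countable, it has zero $s$-Hausdorff measure for every $s>0$ and does not affect the upper bound.
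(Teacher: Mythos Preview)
Your proposal is correct and follows essentially the same route as the paper's own proof: remove the countable set of points with finite block decomposition, cover the remainder by $m$-block cylinders $f_{\mathbf{j}_1\cdots\mathbf{j}_m}(\Lambda)$ with $\mathbf{j}_\ell\in T_{k_\ell}^{(\infty)}$ and $k_\ell\neq k_{\ell+1}$, and control the sum of $s$-th powers of diameters via the spectral radius $\rho(C_\infty^{(s)})<1$ for $s>s_0$. The only cosmetic differences are that the paper uses the interval $[t_1,t_N]$ in place of $\Lambda$ and bounds the cylinder sum crudely by $\|(C_\infty^{(s)})^n\|_1$ rather than writing the exact expression $\alpha^{T}(C_\infty^{(s)})^{m-1}\mathbf{1}$ as you do.
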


\begin{proof}
First, observe that the set $\Gamma=\Pi(\{\mathbf{i}\in\Sigma: B^{\mathbf{i}}<\infty\})$ is countable, and so, it is enough to cover the set $\Lambda\setminus\Gamma$. Without loss of generality, we may assume that $t_1<\cdots<t_N$. Observe that for every $n\in\N$, the set $\Lambda\setminus\Gamma$ can be covered by 
$$
\left\{f_{\mathbf{i}_1\cdots\mathbf{i}_n}([t_1,t_N]):\mathbf{i}_k\in T_{i_k}^{(\infty)}\text{ and }i_k\neq i_{k+1}\text{ for every }k=1,\ldots,n-1\right\},
$$
with diameters at most $\lambda_{\max}^n|t_N-t_1|$. Now, let $s>s_0$ be arbitrary, and so $\rho(C_\infty^{(s)})<1$. Denote $\|.\|_1$ the $1$-norm for $N\times N$ matrices, and let $\ell\geq1$ be such that $\left\|(C_{\infty}^{(s)})^m\right\|_1\leq\left(\frac{\rho(C_\infty^{(s)})+1}{2}\right)^m$ for every $m\geq \ell$. Hence,
\begin{align*}
    \mathcal{H}^s_{\lambda_{\max}^n|t_N-t_1|}(\Lambda\setminus\Gamma)&\leq\sum_{\substack{i_1,\ldots,i_n\in\{1,\ldots,N\}\\i_{k}\neq i_{k+1}\text{ for }k=1,\ldots,n-1}}\sum_{\mathbf{i}_1\in T_{i_1}^{(\infty)}}\cdots\sum_{\mathbf{i}_n\in T_{i_n}^{(\infty)}}\lambda_{\mathbf{i}_1}^s\cdots\lambda_{\mathbf{i}_n}^s|t_N-t_1|^{s}\\
    &\leq\|(C_\infty^{(s)})^n\|_1|t_N-t_1|^s\leq \left(\frac{\rho(C_\infty^{(s)})+1}{2}\right)^n|t_N-t_1|^s
\end{align*}
for every $n\geq \ell$. Hence, $\mathcal{H}^s(\Lambda\setminus \Gamma)=0$ for $s>s_0$, which implies the claim.
\end{proof}

\begin{proof}[Proof of Theorem~\ref{dimattractor}]
    The claim follows by combining \eqref{eq:lowerb}, Lemma~\ref{lem:limit} and Lemma~\ref{lem:ub}.
\end{proof}

\bibliographystyle{amsplain}
\bibliography{cikk}

\end{document}